\documentclass{article}
\usepackage[utf8]{inputenc}

\usepackage[title]{appendix}
\usepackage{authblk}
\usepackage[margin=1in,vmargin=1in]{geometry}
\usepackage{amsmath,amsfonts,amsthm,amscd,amssymb,graphicx}
\usepackage{mathtools}
\usepackage{paralist}
\usepackage{epsfig} 
\usepackage{xcolor}
\usepackage{graphicx}  
\usepackage{epstopdf}
\usepackage{multicol}
\usepackage{comment}
\usepackage{float}
\usepackage[caption = false]{subfig}
\usepackage{enumitem}

 \usepackage[colorlinks=true,pdfencoding=auto, psdextra]{hyperref}
 \usepackage{soul}

\usepackage[compact]{titlesec}        
\titlespacing{\section}{0pt}{12pt}{0pt}
\AtBeginDocument{
  \setlength\abovedisplayskip{0pt}
  \setlength\belowdisplayskip{0pt}}

\def\XXint#1#2#3{{\setbox0=\hbox{$#1{#2#3}{\int}$}
\vcenter{\hbox{$#2#3$}}\kern-.5\wd0}}

\newcommand{\R}{\mathbb{R}}

\newcommand{\be}{\begin{equation}}
\newcommand{\eq}[2]{\begin{equation}\begin{split}#1\end{split}#2\end{equation}}
\newcommand{\ee}{\end{equation}}

\newcommand{\grad}{\triangledown}

\newcommand{\mat}[1]{\begin{pmatrix}#1\end{pmatrix}}
\newcommand{\imply}[1]{\Rightarrow}

\newcommand{\eps}{\varepsilon}

\newcommand{\cmt}[1]{}

\newtheorem{theorem}{Theorem}[section]

\newtheorem{lemma}[theorem]{Lemma}

\theoremstyle{definition}
\newtheorem{definition}[theorem]{Definition}
\newtheorem{remark}{Remark}

\title{
A Dynamical Systems Approach for Most Probable Escape Paths over Periodic Boundaries
}

\providecommand{\keywords}[1]{\textbf{Keywords:} #1}

\author[1]{Emmanuel Fleurantin}
\author[2]{Katherine Slyman}
\author[3]{Blake Barker}
\author[4]{Christopher K.R.T. Jones}
\affil[1,4]{\small{Department of Mathematics, George Mason University, Fairfax, VA 22030}}
\affil[2]{\small{Department of Mathematics, University of North Carolina at Chapel Hill, Chapel Hill, NC 27599}}
\affil[1,4] {\small {Renaissance Computing Institute, University of North Carolina at Chapel Hill, Chapel Hill, NC 27517}}
\affil[3]{\small{Department of Mathematics, Brigham Young University, Provo, UT 84602}}
\affil[*]{\footnotesize{email: efleuran@unc.edu,
kslyman@live.unc.edu, blake@mathematics.byu.edu, ckrtj@renci.org

}\vspace{-4em}}
\date{}

\newcommand{\correction}[2]{#2}
\begin{document}

\maketitle

\bigskip

 \centerline{Dedicated to the memory of Hermann Flaschka}

\begin{abstract}
\noindent Analyzing when noisy trajectories, in the two dimensional plane, of a stochastic dynamical system exit the basin of attraction of a fixed point is specifically challenging when a periodic orbit forms the boundary of the basin of attraction. Our contention is that there is a distinguished {\em Most Probable Escape Path} (MPEP) crossing the periodic orbit which acts as a guide for noisy escaping paths in the case of small noise slightly away from the limit of vanishing noise. It is well known that, before exiting, noisy trajectories will tend to cycle around the periodic orbit as the noise vanishes, but we observe that the escaping paths are stubbornly resistant to cycling as soon as the noise becomes at all significant. Using a geometric dynamical systems approach, we isolate a subset of the unstable manifold of the fixed point in the Euler-Lagrange system, which we call the {\em River}.  Using the Maslov index we identify a subset of the River which is comprised of local minimizers.  The Onsager-Machlup (OM) functional, which is treated as a perturbation of the Friedlin-Wentzell functional, provides a selection mechanism to pick out a specific MPEP. Much of the paper is focused on the system obtained by reversing the van der Pol Equations in time (so-called IVDP). Through Monte-Carlo simulations, we show that the prediction provided by OM-selected MPEP matches closely the escape hatch chosen by noisy trajectories at a certain level of small noise.
\end{abstract}

\begin{flushleft}
{\textbf{AMS Subject Classifications}: 65K10, 11Y16, 60G17, 37J50, 34C45}\\

\keywords{Most Probable Escape Paths, Friedlin-Wentzell Functional, dynamics of ordinary differential equations}\\
\end{flushleft}

\section{Introduction} \label{sec:intro}
It is well known that noise can work against the deterministic motion
of a dynamical system with an attracting fixed point. With probability
one, a noisy trajectory of a system with additive noise will, under
natural conditions, leave the basin of attraction of the fixed point,
assuming it is bounded, at some point in time. Large deviation theory
is devoted to finding the most probable escape path (MPEP) and the
expected time of escape. The MPEP can be thought of as the mode of
the probability distribution function of paths that escape from the
basin of attraction. The central results, which were largely formalized by
Friedlin and Wentzell \cite{freidlin_random_2012}, are asymptotic in the level of noise. 

\subsection{Stochastic Differential Equation}\label{intro_fwk}
Mathematically, the framework is an SDE of the form ($z\in\mathbb{R}^{n})$,
\begin{equation}
dz=F(z)dt+\sqrt{\varepsilon}\sigma dW\label{eq:SDE}
\end{equation}
This is a stochastic perturbation of the deterministic system given
by the drift term: $\dot{z}=F(z)$ where the noise strength is $\varepsilon$,
the structure of the noise is given by the $n\times n$ matrix $\sigma$,
and $W$ represents the standard 
\correction{comment 20}{Wiener}
process (here as a vector of separate processes). We will assume that
the drift vector field $F(z)$ is as smooth as needed, and generally
that $\sigma$ is the identity matrix. Freidlin-Wentzell theory has
its origins in the case of $F(z)$ being gradient: $F(z)=-\nabla V(z)$
for some potential $V$. Friedlin and Wentzell introduced the notion of a quasipotential
in order to generalize the theory and this provides a framework for
answering many questions. A particular challenge arises, however, when
the basin boundary is a periodic orbit. The work of Day \cite{day_exit_1996} gives
a clear and comprehensive picture of what happens in this case as
the noise vanishes. Day showed that there is no preferred exit point
or region along the periodic orbit and that the periodic motion causes
the most probable exit point to cycle around as the noise decreases.
The work of Maier and Stein \cite{maier_oscillatory_1996} also added to this picture and
a very detailed analysis was more recently given by Berglund and Gentz
\cite{berglund_noise-induced_2004,berglund_noise-induced_2014}

\subsection{Main Example}\label{intro_prob}
Monte-Carlo simulations of standard examples of Equation (\ref{eq:SDE}) where
$z\in\mathbb{R}^{2}$ and an attracting fixed point is surrounded
by an unstable periodic orbit, which is its basin boundary, show a
different picture in practice. The example that is invoked more than
any other of this scenario is the Inverted van der Pol equation (IVDP),
\correction{comment 15}{
\begin{equation}
\begin{aligned}
\dot{x}&=y, \\
\dot{y}&=-x+2\eta y(x^2-1).
\end{aligned}   
\label{eq:ivdp}
\end{equation}
which, when 
noise is added and
put in the form of Equation (\ref{eq:SDE}) in $\mathbb{R}^{2}$,
}
exhibits
a striking escape pattern that is at a rather definite part of the
periodic orbit. In Figure 1, we take $\eta=0.5$ and $\sqrt{\varepsilon}=0.3$2.
While this level of noise is decidedly not vanishingly small, it is
small and close to the limit of feasible Monte-Carlo simulations that
capture the MPEP without resorting to a strategy such as importance
sampling. The theory tells us that the escape will indeed be carried
around periodically. But it is also striking how stubbornly the escape
region shown in Figure 1 persists when the noise is small but not
vanishingly so. It can be seen in \cite{PhysRevLett.92.020601} how hard it is
actually to see the cycling, even to get one iteration of the escape
region half way around the periodic.

\begin{figure}[tbp]
\begin{center}
\includegraphics[height=6cm]{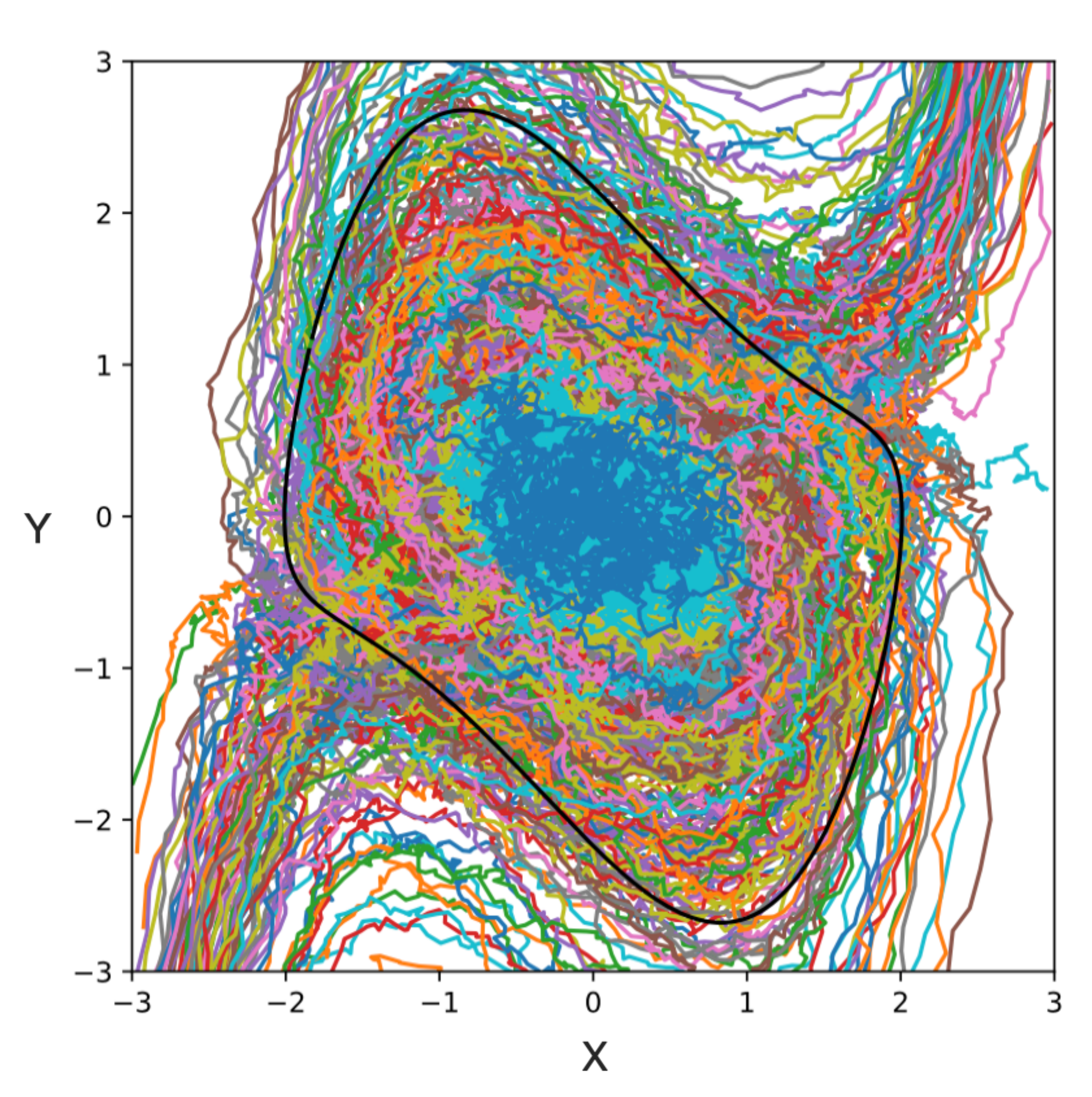}
\end{center}
\caption{Monte-Carlo simulations of noisy trajectories that escape the periodic orbit of the IVDP with noise strength $\sqrt{\varepsilon }=0.32$}
\label{fig:MPEPC1}
\end{figure}

\subsection{Dynamical Systems Approach}\label{intro_app}
We take the opposite viewpoint from the more theoretical literature
in this paper. Instead of trying to show that the theoretically predicted
cycling actually happens, we aim to show why non-cycling occurs at
small noise, but slightly away from the limit. The question we pose
is whether the evident escape region can be identified and understood
using the tools of the asymptotic theory. 

There are a number of different
approaches taken in analyzing the escape phenomenon, for instance:
matched asymptotics (WKB) \cite{PhysRevB.34.1572,PhysRevA.38.2605}, quasipotential theory (HJE)
\cite{day_exit_1996}, and a dynamical systems approach using random Poincaré maps
\cite{Niels1}. Various mixtures of these approaches have been
used, particularly in the physics literature \cite{PhysRevLett.92.020601,smelyanskiy_topological_1997,dykman_observable_1994,PhysRevE.72.036131}. 

In this work, a dynamical systems viewpoint is applied to the Euler-Lagrange
system derived for the Freidlin-Wentzell (FW) action functional of
the SDE with IVDP dynamics. We identify a specific part of the boundary
from a careful study of the geometric structure of the unstable manifold
of the fixed point in the Euler-Lagrange system. We call this set
of trajectories the \emph{River}. It is bordered by two specific
trajectories that are heteroclinic from the fixed point at the origin
to the periodic. It is shown by a number of authors \cite{PhysRevLett.92.020601, beri_solution_2005} that the global minimizer of the FW functional is a
hertoclinic and it acts as a
MPEP. 

Of particular interest are the
points on the periodic orbit where the river trajectories cross as they escape. We further find a subset of the river trajectories
that has a close correspondence with the escape hatch seen in the
Monte-Carlo simulations. To identify this set, we invoke the Onsager-Machlup
functional to account for the level of noise being small but away
from the asymptotic limit. 

There is some irony in our approach in that we are using the asympotic
theory (FW) to capture a phenomenon that we claim lies outside the
validity of that approach. One way to look at this is that we are
seeing which parts of the asymptotic theory extrapolate to this level
of noise and how it needs to be modified to capture the effects in
this parameter range. 

\subsection{Motivation}\label{sec:motiv}
Our motivation comes from thinking about physical systems relevant to the climate.  Tipping from an apparently stable state is particularly relevant in assessing climate change as abrupt changes are known to take place in critical climate subsystems. Moreover, the specter of their occurring in systems with large impact on climate functioning has made their study pressing, see \cite{Ritchie_Sieber,Wieczorek_Ashwin_Luke_Cox,Boulton2014,Lenton2011}. Three distinct types of tipping have been identified and distinguished from each other in the literature, see \cite{Ashwin_2017,dAPW}. One of these is through the response of a system to external noise. This noise may be identifiable as a known forcing of unknown magnitude, or an accounting of missing physics. Mathematically, noise is usually included through an additive stochastic perturbation of the underlying differential equations (DE). The complexity of the overall climate system makes this framework of stochastically perturbed DEs 
highly appropriate. Two questions then arise naturally: 
\begin{enumerate}
    \item Within a certain time horizon, what is the probability of tipping away from a given attracting state?
    \item What is the most likely trajectory that the system will take to tip? 
\end{enumerate}
 From the climate viewpoint, the first question addresses how dangerous the prospect of tipping might be, while the second will point to the way it will most likely happen. 

Tipping due to noise, so-called noise-induced or N-tipping, has largely been studied as a problem in Large Deviation Theory, see \cite{ forgoston_escape_2009, forgoston_primer_2018, tao_hyperbolic_2018, cameron_finding_2012, ryvkine_pathways_2006, grafke_numerical_2019}. As mentioned in Sec.\ \ref{intro_fwk}, large deviation theory comprises a body of results that are valid in the limit of vanishing noise. This is partly due to the connection of its development with molecular chemistry, but also that, from the mathematical viewpoint, it is a context in which rigorous analytical results can be obtained. As a consequence, the results obtained may only be valid for extremely long time scales, and these may be beyond what is relevant for the climate. In the climate context, we suggest that noise should be set to be small (otherwise the noise is trying to capture mechanisms that should be included in the model), but not vanishingly so. We shall refer to this as the intermediate noise case, although it is known as ``finite noise” in the physics literature, see \cite{Bandrivskyy_2003}.

\subsection{Plan of Paper}\label{intro_plan}
The paper is organized as follows. In Section \ref{sec:intro_key} we review some preliminary notions from Friedlin-Wentzell theory, discuss the key ideas and give a roadmap for the work. In Section \ref{sec:heteroclinics} we  discuss our methodology for computing unstable manifolds of fixed points and stable manifolds of periodic orbits, along with how we compute the heteroclinic orbits. Section \ref{sec:conjugatep} treats methods for computing conjugate points and how we relate our approach with the Maslov index. 
In Section \ref{sec:river} we discuss trajectories that exit the periodic orbit and relate them to (local) minimizers of a constrained variational problem. This is where we introduce the notion of the River and  of a pivot point. In Section \ref{sec:onsager} we describe our use of the Onsager-Machlup functional as a selection mechanism to pick out an MPEP for intermediate noise strength. In Section \ref{sec:mcsim}, we derive a converged distribution for the location of escape events. In Section \ref{sec:conclusion} we discuss how we match our simulations with the theory that we developed using the escape hatch, the pivot point, the OM point and the Monte Carlo simulations. Finally, Section \ref{sec:conclusion1} is devoted to discussing our approach and concluding remarks.
   
\section{Background and Key Ideas}\label{sec:intro_key}

We will work with a general set-up as given by Equation \eqref{eq:SDE} with $z\in\mathbb{R}^{2}$ and make assumptions that reflect the particular structure of interest. Some of these are very general, while some require explanation and verification in specific cases. Our viewpoint is that this latter type of assumptions would be verified numerically and we illustrate this for the case of IVDP, see Equation \eqref{eq:ivdp}. 
\subsection{Goal}\label{key_goal}
In the case of small but not vanishing noise, it is commented in Section \ref{intro_prob} that the ``escape hatch" through which noisy trajectories will favor exiting is fairly distinctive. Our goal is to show that this subset of the periodic can be clearly identified through a construction rooted in theory.

The construction will be based on finding structures in the dynamical Euler-Lagrange equations generated by finding extrema of the Freidlin-Wentzell action functional. 

\subsection{Set-up}\label{sec:key_setup}
Start with Equation \eqref{eq:SDE}, with $\sigma=I$, where $I$ is the $2 \times 2$ identity 

\begin{equation}\label{eq:1a_}
        dz = F(z)dt + \sqrt{\varepsilon}I dW,\\
\end{equation}

The first assumptions are on the underlying deterministic system
\begin{equation}\label{eq:1b_}
        \frac{dz}{dt} =  F(z),
\end{equation}
defined on \correction{comment 1}{$z = (x,y) \in \mathbb{R}^2$, $F=(f,g)$,} to capture the dynamics in which the escape of noisy paths, i.e. solutions of Equation \eqref{eq:1a_}, is through an unstable periodic orbit. 

\begin{description}
    \item[(A1)] There is an (exponentially) attracting fixed point of Equation \eqref{eq:1b_}, which we will assume is at $z=(0,0)$, and it is the only fixed point, 
    \item[(A2)] Equation \eqref{eq:1b_} has an (exponentially) repelling periodic orbit, which we denote by $\Gamma$, surrounding the attracting fixed point at the origin. Moreover there are no periodic orbits inside $\Gamma$.
\end{description}

The theory of large deviations tells us that the most probable paths of escape from the attracting fixed point through the unstable periodic orbit should minimize the Freidlin-Wentzell functional \cite{freidlin_random_2012}. In its most basic form, the functional is for paths $z=\varphi (t)$ on $[0,T]$
\begin{equation}\label{1c_}
    S_T(\varphi)=\frac12 \int_0^T \left|\dot{\varphi}-F(\varphi)\right|^2 dt,
\end{equation}
where $\dot{}=\frac{d}{dt}$. 

The most likely path from $z_0 \in \mathbb{R}^2$ to $z_1 \in \mathbb{R}^2$ is given by the path that minimizes Equation (\ref{1c_}) over absolutely continuous functions from $[0,T]$ to $\mathbb{R}^2$ with $z(0)=z_0$ and $z_1=z(T)$. The minimization procedure works well if, in reference to a system satisfying (A1) and (A2), the points $z_0$ and $z_1$ are inside $\Gamma$ and neither is the fixed point at the origin. If the paths involve $(0,0)$,  or cross $\Gamma$, then the situation is more complicated and this will be our focus. Nevertheless, the basic theory associated with the so-called action functional of Equation \eqref{1c_} underpins everything we will do. 

\subsection{Euler-Lagrange Equations}\label{key_EL}
The action functional of Equation (\ref{1c_}) can be written in terms of a Lagrangian 
\begin{equation}\label{1d_}
    S_T(\varphi)=\frac12 \int_0^T L(z,\dot{z}) dt,
\end{equation}
where, obviously $L(z,\dot{z})=\left|\dot{\varphi}-F(\varphi)\right|^2$. As in classical mechanics, the Euler-Lagrange equations for extrema of Equation (\ref{1d_})
can be written as a Hamiltonian system. 
We set

\[
p = \dot{x}-f,\quad q= \dot{y}-g.
\]
The Euler-Lagrange equations as a Hamiltonian system then reads,
\begin{equation}\label{eq:hamil}
\begin{split}
\dot{x}&= f+p\\
\dot{y}&= g+q\\
\dot{p}&= -f_xp-g_xq\\
\dot{q}&= -f_{y}p-g_{y}q.  
\end{split}
\end{equation}
The Hamiltonian is given by
\begin{equation}
    \label{hamil_func}
    H(x,y,p,q)=f(x,y)p+g(x,y)q+\frac{p^2+q^2}{2}
\end{equation}
A key point to note is that $p=q=0$ is invariant and that invariant plane carries the deterministic flow given by Equation (\ref{eq:1b_}), recalling that $z=(x,y)$. As a consequence, the fixed point 
\correction{comment 15}{at the origin} and periodic orbit $\Gamma$ reappear with their attraction and repulsion reproduced within the plane. Note that, with a slight abuse of notation we shall use the same notation of $O$ and $\Gamma$ for the fixed point and periodic orbit, respectively, in reference to both Equations (\ref{eq:1b_}) and (\ref{eq:hamil}).   Their stability properties change, however, in the full $4$-dimensional system of Equation (\ref{eq:hamil}).  This is the key to using Equation (\ref{eq:hamil}) for determining the most probable paths of escape from the attracting fixed point out of its domain of attraction. 

\correction{comment 3}{By assumption (A1), two of the eigenvalues of Equation (\ref{eq:hamil}) linearized at $O$ have positive real part,} which are the negative of the (deterministic) eigenvalues of Equation (\ref{eq:1b_}). Thus the unstable manifold of $O$, which we denote $W^u(O)$ is $2$-dimensional. \correction{comment 2}{The periodic orbit $\Gamma$ has one stable, two neutral, and one unstable Floquet multipliers. It may seem as though $\Gamma$ should have a 1-dimensional stable manifold, but integrating this 1-dimensional stable direction results in a tangent bundle, which will be a 2-dimensional manifold. It follows from Equation (\ref{eq:hamil}) that $\Gamma$ has a a 2-dimensional unstable manifold (in the deterministic plane) and a 2-dimensional stable manifold $W^s(\Gamma)$ which lies in the complement of the deterministic plane in $\R^4$.}

Both of these 2-dimensional objects will play central roles in this work, and the unstable manifold $W^u(O)$ will be the main focus. In the next section we relate it to the minimization procedure that renders the most probable paths. 

\subsection{The Quasipotential and \texorpdfstring{$W^u(O)$}{WuO}}\label{key_unstmfld}
Of particular interest are paths starting at the fixed point and escaping its basin of attraction, i.e., getting outside $\Gamma$. First, consider paths that reach some point $z^*$ possibly inside $\Gamma$ from a start at the fixed point. The formulation of the action functional suggests that we seek paths going from  the fixed point to $z^*$ in time $T$. The time it takes to reach $z^*$ is something we want to keep free, however, and so the following quantity, called the {\em Quasipotential}, see \cite{freidlin_random_2012}, is defined as

\begin{equation}
    \label{quasipot}
    V(z^*)=\inf_{T>0, \varphi \in X_T} S_T (\varphi ),
\end{equation}
where $X_T$ is the set of absolutely continuous functions satisfying the boundary conditions: $\varphi (0)=0$, i.e., the fixed point, and $\varphi (T) =z^*$. It follows from Lemma 3.1 in \cite{freidlin_random_2012} that any minimizer realizing the infimum in Equation \eqref{quasipot} must lie in the set $H=0$. The only point in $H=0$ with $z=(0,0)$ is the fixed point of Equation \eqref{eq:hamil}, i.e., with $p$ and $q$ also equal to $0$. 

Since the only access to the fixed point at $O$ in the zero-set of the Hamiltonian of the 4D system of Equation (\ref{eq:hamil}) is on the unstable manifold $W^u(O)$, it follows that any minimizer must lie in $W^u(O)$ and the domain on which any minimizer is defined must be semi-infinite. By a reparameterization, if necessary, it can be taken to be $(-\infty, 0]$. 

For $z^*$ inside $\Gamma$, the infimum in Equation (\ref{quasipot}) is realized by a trajectory on $W^u(O)$. An important point is that for $z^*\in \Gamma$, this is not the case even though there may be trajectories on $W^u(O)$ that cross $\Gamma$. 
 \begin{remark}

By a quirk of the way the quasipotential is defined, the minimizer is not actually in the space on which the functional is defined, namely $X_T$, since a minimzing path cannot reach $O$ in finite (backward) time. This is rectified by considering what is called the Geometric Minimum Action, see \cite{heymann_geometric_2008}. The geometric action has the effect of reparametrizing the paths so that they all lie on a fixed bounded domain. One way this is achieved is to use arc length to parameterize the paths. Since the paths on $W^u(O)$ with fixed end point $z$ inside $\Gamma$ have finite arc length, the minimizing path does lie in the set of paths over which the geometric action is minimized. Note that this does not work for the trajectories in $W^s(\Gamma)$ as the arc length of any trajectory tending to $\Gamma$ is necessarily infinite. It is for this reason that we do not directly use the geometric minimum action in this work.

\end{remark}

\subsection{Singularities of the Quasipotential and Folding of \texorpdfstring{$W^u$}{Wu}}

It is well known that the quasipotential is not in general smooth. Caustics can form, see \cite{smelyanskiy_topological_1997}, and there might be multiple minimizers of the action functional with the same $z^*=(x,y)$ value. Viscosity solutions of the associated Hamilton-Jacobi equation are invoked to sort which is the global minimizer (infimum) for that value of $z^*$, see \cite{cameron_finding_2012}. 

The signature of a singularity of the quasipotential in the unstable manifold is a fold in the manifold when projected onto $(x,y)$-space. Indeed, if over a set $U \subset \R^2$, the unstable manifold $W^u(O)$ is given by the graph of a function $(p,q)=h(x,y)$ for $(x,y)\in U$, then the quasipotential will be smooth on $U$. Folds can be detected by finding conjugate points (Definition \ref{def:conj_pt}) along trajectories, see Section \ref{sec:conjugatep}. 
In actual fact, it is unlikely that the full unstable manifold is the graph of a function of $(x,y)$. This is because of the complex tangling that occurs when there are transverse intersections of stable and unstable manifolds along heteroclinic orbits. By identifying where these folds happen and looking at trajectories on $W^u(O)$ up to these fold points, we can obtain a clear picture of the quasipotential in large regions inside $\Gamma$. The key is the fold points are related to the minimzation of the action functional as they are conjugate points as used in the calculus of variations. To see this, note that at a fold point the tangent space to $W^u(O)$ will have a vertical tangent vector. This forces there to be a conjugate point. 

Since we are interested in trajectories on $W^u(O)$, we make a definition of conjugate point that is tailored to this situation.

\begin{figure}[tbp]
\begin{center}
$
\begin{array}{c}
\includegraphics[scale=0.5]{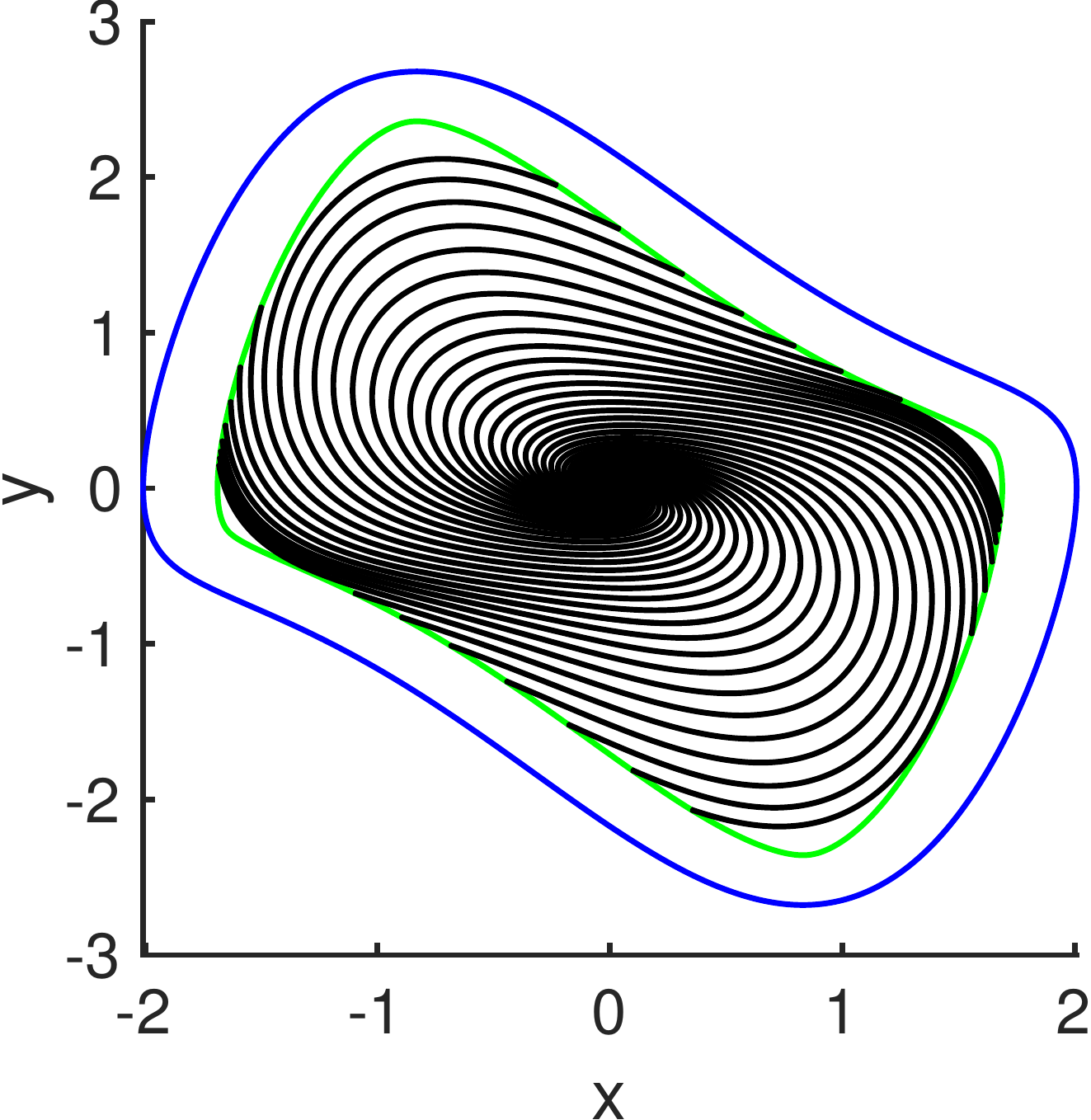} 
\end{array}
$
\end{center}
\caption{Plot of paths sampled from the entire unstable manifold of $O$, indicating no conjugate points occur before a path reaches the 0.32 border (in green).}
\label{fig:conj_no_conj}
\end{figure}

\begin{definition} \label{def:conj_pt}
If $z(t)$ is a trajectory of Equation (\ref{eq:hamil}) on $W^u(O)$ then $\tau$ is said to be a conjugate point if the projection of the tangent space $T_{z(\tau)}W^u(O)$ to $(x,y)$ space is not of full rank (i.e., not onto). 
\end{definition}

This corresponds to the classical definition of a conjugate point extended to the case of an extremizing trajectory on a semi-infinite domain. \correction{comment 4}{Indeed, if $\tau$ is a conjugate point then there will be a} solution of the linearization of Equation (\ref{eq:hamil}), denoted $U(t)$, along $z(t)$ that satisfies $u(\tau)=0$ and $u(t) \rightarrow 0$ as $t \rightarrow -\infty$ where $U=(u,w)$ and $u$ is the two-dimensional (linearized) variable corresponding to $(x,y)$. 

The construction of $W^u(O)$ is achieved by taking a small circle around $O$ inside $(x,y)$ space and growing it under the flow. This is explained in Section \ref{sec:heteroclinics}. We can see how far the unstable manifold can be grown without hitting a conjugate point along any of the trajectories. 

We set a $\delta-$collar of the periodic orbit as the set of points (in $(x,y)$ space) inside $\Gamma$ that are within a distance $\delta$ of $\Gamma$. It is known that, see \cite{berglund_noise-induced_2004, berglund_noise-induced_2014}, if the trajectories are within \correction{comment 12}{$\mathcal{O}(\sqrt{\epsilon})$} of the periodic then the diffusion will dominate and cycling will not play a significant role. It is thus interesting to see if we can reach the \correction{comment 12}{$\delta \sim \mathcal{O}(\sqrt{\epsilon})$} collar without hitting a conjugate point. With $\sqrt{\epsilon}=0.32$ in the IVDP system, we see that there are, in fact, no conjugate points between this collar and the fixed point, see Figure \ref{fig:conj_no_conj}. This has the consequence that the quasipotential is smooth in this set as there will be no folding until the collar is reached, where, as stated above, diffusion takes over.

\subsection{Heteroclinic Orbits}\label{sec:hetero1}
Evaluating the quasipotential for $z^* \in \Gamma$ requires special consideration. It must be constant on $\Gamma$ as it costs nothing in terms of $V$ to traverse the periodic orbit since it is an orbit of the deterministic system. From general variational arguments, there must be a minimizing trajectory, although its domain may not be finite, or even semi-infinite. Since the minimizing trajectory must be smooth, it follows that it must be a heteroclinic orbit from $O$ to $\Gamma$. In particular, its domain will be $(-\infty, +\infty)$ and it lies in both $W^u(O)$ and $W^s(\Gamma)$.
\begin{figure}[tbp]
\begin{center}
\includegraphics[height=4cm]{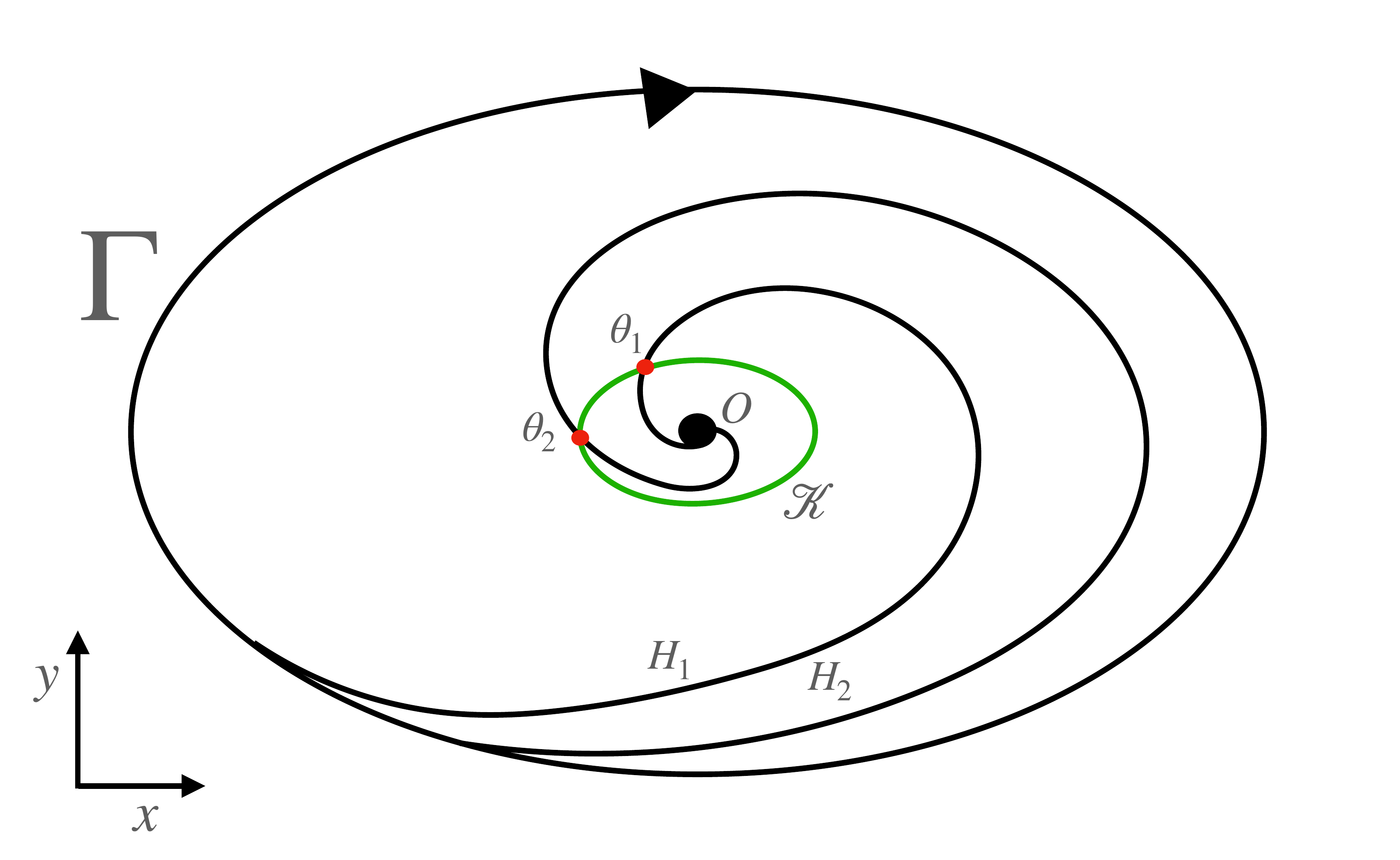}
\end{center}
\caption{Orbits on $W^u(O)$ can be parameterized by a simple curve $\mathcal{K} \subset W^u$ that surrounds $O$. $\mathcal{H}_1$ and $\mathcal{H}_2$ can be obtained from $\theta_1, \theta_2 \in [0, 2\pi)$.}
\label{fig:Hetero_River1}
\end{figure}

Generically, we expect there to be more than one heteroclinic orbit. Indeed if the minimizing heteroclinic is realized by a transverse intersection of $W^u(O)$ and $W^s(\Gamma)$, then there must be, at least, a second one. In the IVDP equation, we find exactly four, but there is a symmetry in that problem and so there are only two independent heteroclinics, the other two being given as a mirror image. We can distinguish trajectories that are minimizers by their Maslov Index \correction{comment 19}{\cite{doi:10.1098/rsta.2017.0187, Arnold}}. In the following definition, $Z=\{z(t)|t \in I\}$ will be a trajectory of Equation (\ref{eq:hamil}) on some interval $I\subset \R$, which may be bounded, infinite or semi-infinite. 

\begin{definition}
    The {\em Maslov Index} of a trajectory $Z$  on $W^u(O)$, denoted $m(Z)$, is the number of conjugate points in $t \in I$ along $z(t)$, counting multiplicity. 
\end{definition}

\noindent The multiplicity is that of the projection in the definition of conjugate point.

We make the following assumptions about the configuration of heteroclinic orbits in Equation (\ref{eq:hamil}) which we will refer back to in Section \ref{sec:conjugatep} and Section \ref{sec:river}, and verify for IVDP.
\begin{description} 
 \item[(A3)] There are two heteroclinic orbits of Equation \eqref{eq:hamil}, denoted $\mathcal{H}_1$ and $\mathcal{H}_2$, both from $(0,0)$ at $-\infty$ to $\Gamma$ at $+\infty$. Moreover $W^u(O)$ and $W^s(\Gamma)$ intersect transversely along each of them.
 \end{description}
 We note that $W^u(O)$ and $W^s(\Gamma)$ live in $H = 0$. If $x$ and $y$ are fixed, then a convenient description can be given for $H=0$. By completing the square inside the expression for $H=0$ the coordinates $(p,q)$ satisfy 
 
\begin{equation}\label{eq:Ham_torus}
(p+f(x,y))^2+(q+g(x,y))^2= f^2(x,y)+g^2(x,y),
\end{equation}
which is a circle with center at $(f(x,y), g(x,y))$ and radius $(f^2(x,y)+g^2(x,y))^\frac12$.
    
Let $C\subset \R^2$ be a simple closed curve, parameterized by $s\in[0,1]$, enclosing the fixed point $O$ and lying inside the periodic orbit $\Gamma$. Putting the circles together for each $s\in[0,1]$ yields a torus $\mathcal{T}_C \subset \R^4$. The intersection of $W^u(O)$ with $\mathcal{T}_C$, and of $W^s(\Gamma)$ with $\mathcal{T}_C$, generically are closed curves in the toroidal direction of $\mathcal{T}$ that do not wrap in the \correction{comment 20}{poloidal} direction. Indeed, the intersection cannot wrap in the \correction{comment 20}{poloidal} direction because this would correspond to $W^u(O)$ intersecting the $p = q = 0$ plane, which it cannot do since that plane is invariant. Thus, if these two closed curves intersect once, they must intersect at least twice. That is, there exists a second heteroclinic connection $\mathcal{H}_2$. In general, and generically, the number of crossings must be even, corresponding to an even number of heteroclinic connections. This gives a justification for seeking two heteroclinic orbits. We will distinguish them through their Maslov indices.

\begin{description}
  \item[(A4)] The Maslov index of $\mathcal{H}_1$ is $0$ and of $\mathcal{H}_2$ is $1$.  
\end{description}

Since an orbit with no critical points will minimize the action functional, the heteroclinic $\mathcal{H}_1$ will be a minimizer. On the other hand, we know that $\mathcal{H}_2$ will not be. A priori, we do not know that $\mathcal{H}_1$ is a global minimizer as there may be others with Maslov Index equalling 0. Generically, there will be a finite number and the global minimizer is found just by conducting a search through action values. In the IVDP system, the 0-Maslov Index trajectory (and its mirror image) are verified to be minimizers as there are no others.

\subsection{Exit Trajectories}\label{subsec:ke y_exit}
A characteristic of 2-dimensional systems such as the one we are considering is that, at least for generic problems, some trajectories on $W^u(O)$ exit the periodic orbit (when projected onto the $(x,y)$ plane.) This cannot happen in 1-dimensional, nor in gradient systems in 2D (which are not generic). Moreover, we can show that some of these exit trajectories are (local) minimizers of the FW action functional, see Section \ref{sec:river}. 

There must be at least one heteroclinic connection between the origin $O$ and the periodic $\Gamma$. Indeed, there exists a most probable escape path that is a heteroclinic connection. 
We are assuming in (A3) that there are at least two heteroclinic connections between $O$ and $\Gamma$. For IVDP, we numerically verify the existence of four such heteroclinic connections. We expect that \correction{comment 20}{heteroclinic} connections come in pairs. 

When $W^u(O)$ and $W^s(\Gamma)$ intersect transversely along the heteroclinic orbits then complex tangling will occur. This is a familiar picture in dynamical systems that is related to a homoclinic tangle and the Shilnikov mechanism for chaotic dynamics. While this picture is very complicated, if we view it in terms of finite portions of $W^u(O)$ as it is built up, then some clarity over the main trajectories that play a role in guiding the stochastic trajectories can be obtained. In Section \ref{sec:heteroclinics}, a procedure is articulated for growing the unstable manifold. By a finite portion of $W^u(O)$ we mean the unstable manifold grown out to a fixed finite time from its generating circle.  The following lemma spells out that there must exist exit trajectories if there is a transverse heteroclinic.

\begin{lemma} \label{lemma:exit}
In the neighborhood of a transverse heteroclinic orbit  with Maslov Index 0 (as in (A3)), there are trajectories on $W^u(O)$ which exit $\Gamma$ and, moreover have no conjugate points before exiting $\Gamma$.
\end{lemma}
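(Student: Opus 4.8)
The plan is to parametrize the orbits of $W^{u}(O)$ near $\mathcal{H}_{1}$ by a parameter $\theta$ along the generating curve $\mathcal{K}$ of Figure~\ref{fig:Hetero_River1}, with $\mathcal{H}_{1}$ the orbit at $\theta=\theta_{1}$, and to track both the orbits and their tangent planes $T W^{u}(O)$ as $\theta\to\theta_{1}$. I would work in a thin tube $N$ about $\Gamma$ inside the level set $\{H=0\}$, in which $\Gamma$ is a hyperbolic periodic orbit with one unstable and one stable normal direction; straighten $N$ by Floquet coordinates $(\phi,u,s)$, where $u$ is the (in-plane) unstable direction and $s$ the stable one, which leaves the deterministic plane $P=\{p=q=0\}$ except along $\Gamma$. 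Then $W^{s}(\Gamma)\cap N=\{u=0\}$ and $P\cap N=\{s=0\}$, and $\mathcal{H}_{1}$ enters $N$ at some time $t_{1}$ inside $\{u=0\}$, limiting on $\Gamma$. The transversality in (A3) says exactly that moving in $\theta$ off $\theta_{1}$ moves the orbit off $W^{s}(\Gamma)$, so its unstable coordinate at the entry time is $u_{1}(\theta)=c\,(\theta-\theta_{1})+\mathcal{O}((\theta-\theta_{1})^{2})$ with $c\neq0$. For $\theta$ on the side of $\theta_{1}$ where $u_{1}(\theta)$ points to the exterior of $\Gamma$, the unstable coordinate grows, $u(t)\sim u_{1}(\theta)\,e^{\lambda_{u}(t-t_{1})}$, while $s(t)$ decays, so the $(x,y)$-projection of the orbit, which was inside $\Gamma$ while it shadowed $\mathcal{H}_{1}$, crosses $\Gamma$ (transversally, to the outside) at a time $t_{\mathrm{exit}}(\theta)$ with $t_{\mathrm{exit}}(\theta)-t_{1}=\lambda_{u}^{-1}\log(\varrho/|u_{1}(\theta)|)+\mathcal{O}(1)\to\infty$ as $\theta\to\theta_{1}$, where $\varrho$ is the width of $N$; moreover the orbit stays within $\mathcal{O}(\varrho)$ of $\mathcal{H}_{1}$ on all of $[t_{1},t_{\mathrm{exit}}(\theta)]$. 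This produces the required exit trajectories.

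It then remains to rule out conjugate points on $(-\infty,t_{\mathrm{exit}}(\theta)]$ for $\theta$ close enough to $\theta_{1}$, which I would do on three pieces of the orbit. Near $O$, $T_{z(t)}W^{u}(O)\to E^{u}(O)$, and $E^{u}(O)$ projects isomorphically onto the $(x,y)$-plane --- the linearization at $O$ is block upper-triangular with diagonal blocks $DF(0)$ and $-DF(0)^{T}$, $DF(0)$ being hyperbolic by (A1), and this is just the absence of a conjugate point near $-\infty$ --- so no conjugate point arises for $t$ sufficiently negative, uniformly in $\theta$. On the compact segment of $\mathcal{H}_{1}$ from a fixed neighborhood of $O$ up to $t_{1}$, extended by any fixed further length $T_{0}$, $\mathcal{H}_{1}$ has no conjugate point since $m(\mathcal{H}_{1})=0$, so there $T W^{u}(O)$ projects with full rank and a positive lower bound; continuity in $\theta$ carries this to $z_{\theta}$ for $\theta$ near $\theta_{1}$.

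The crux is the last segment $[t_{1}+T_{0},t_{\mathrm{exit}}(\theta)]$, whose length diverges as $\theta\to\theta_{1}$, so continuity in $\theta$ no longer helps. Here $T_{z_{\theta}(t)}W^{u}(O)=\mathrm{span}\{\dot z_{\theta}(t),V_{\theta}(t)\}$ with $V_{\theta}(t)=\partial_{\theta}z_{\theta}(t)$. Since $z_{\theta}$ is $\mathcal{O}(\varrho)$-close to $\Gamma$, $\dot z_{\theta}(t)$ is $\mathcal{O}(\varrho)$-close to the tangent of $\Gamma$, which lies in $TP|_{\Gamma}$; and transversality forces $V_{\theta}(t_{1})$ to carry a component along the unstable direction $v_{u}$ that is bounded away from $0$ uniformly for $\theta$ near $\theta_{1}$. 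Since $v_{u}$ itself lies in $TP|_{\Gamma}$ (as $P$ is invariant and $\Gamma$ repels within it), and $v_{u}$ expands like $e^{\lambda_{u}t}$ while the stable component decays and the flow component stays bounded, the $2$-plane $T_{z_{\theta}(t)}W^{u}(O)$ converges --- uniformly in $\theta$ and at a $\theta$-independent rate --- to $TP|_{\Gamma}$ at the nearby point of $\Gamma$ once $t\ge t_{1}+T_{0}$. As $TP|_{\Gamma}$ is mapped isomorphically onto the $(x,y)$-plane, so is $T_{z_{\theta}(t)}W^{u}(O)$ on this segment, with a uniform lower bound; hence no conjugate point occurs there. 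Combining the three pieces proves the lemma.

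The step I expect to be the main obstacle is this last one. Because $t_{\mathrm{exit}}(\theta)\to\infty$, plain continuity in $\theta$ must be replaced by a uniform hyperbolicity estimate near $\Gamma$ --- essentially an exponential-dichotomy and roughness argument for the linearized flow with a spectral gap that stays bounded below --- showing that the off-plane stable direction inside $T W^{u}(O)$ is squeezed out relative to the in-plane unstable direction before the orbit can develop a vertical tangent and escape. The accompanying bookkeeping --- that $V_{\theta}$ enters $N$ with a uniformly non-degenerate unstable component (again a consequence of the transversality in (A3)) and that $\dot z_{\theta}$ stays close to $TP|_{\Gamma}$ --- is routine, and the remaining pieces are just continuity on compact sets.
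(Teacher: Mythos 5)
Your proposal is correct and follows essentially the same route as the paper's (very brief) proof sketch: transversal straddling of $W^s(\Gamma)$ by $W^u(O)$ inside the 3D level set $H=0$ to produce exit trajectories, plus control of the tangent planes to $W^u(O)$ to rule out conjugate points before exit. The paper dispatches the second step with a single sentence ("continuity of the tangent space to $W^u(O)$ as trajectories are perturbed"), whereas you correctly observe that plain continuity is not enough because the exit time diverges as $\theta\to\theta_1$, and you supply the needed uniform inclination-lemma/exponential-dichotomy estimate near $\Gamma$ showing the tangent planes are attracted to the in-plane plane $TW^u(\Gamma)$, which projects isomorphically onto $(x,y)$; this is a strengthening of the same argument rather than a different approach.
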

 The proof of this lemma is a standard dynamical systems proof based on the observation that, inside the 3D set $H=0$, $W^u(O)$ will straddle $W^s(\Gamma)$, by transversality, and one part will have to exit $\Gamma$. The fact that there will be no conjugate points of the exiting trajectories before exit follows from continuity of the tangent space to $W^u(O)$ as trajectories are perturbed. 

 Over the periodic orbit, the zero energy level $H=0$ is a torus as indicated above since $\Gamma$ is a simple closed curve itself. Consistently with the above notation. this torus is denoted $\mathcal{T}_{\Gamma}$ and any trajectory exiting the periodic orbit (when projected onto $(x,y)$-space) must exit through $\mathcal{T}_{\Gamma}$. 

 A key object for understanding the MPEP structure for intermediate noise is the set $W^u(O) \cap \mathcal{T}_{\Gamma}$. Due to the tangling of $W^u(O)$ this set will be very complicated. But we will isolate a subset of it, using the Maslov Index, that we argue gives considerable insight into the escape hatch noted from Monte-Carlo simulations. This will be the subject of Section \ref{sec:river}. In order to describe this set properly, we first need to delve further into the way we compute the various invariant manifolds.

\section{Computing Stable and Unstable Manifolds
\label{sec:heteroclinics}}

Computing the unstable manifold proceeds in two steps. For the first part we invoke a highly accurate method for calculating the local unstable manifold near the fixed point at $O$. Since the manifold is 2-dimensional, the full manifold can be generated by initiating trajectories from a circle inside the local unstable manifold. The set of trajectories so constructed form the global, or full, unstable manifold. 

Except for the heteroclinic orbits themselves, we are interested in trajectories that reach the periodic orbit $\Gamma$ in finite time. Therefore we can focus on a finite portion of $W^u(O)$. Such a finite portion can be generated up to any desired accuracy by going to high enough order in the method described next.

\subsection{Computing  the Local \texorpdfstring{$W^u(O)$}{WuO}}
We use the parameterization method of \cite{Cabre1, Cabre2, Cabre3, Haro2, MR2276478, FLEURANTIN2020105226} to accurately compute the local unstable manifold of the fixed point $O$, and grow the unstable manifold in order to compute the heteroclinic connections and the set we call the River.  By doing so, we obtain a high-order approximation of $W^u(O)$. The parameterization method lays out a general functional analytic framework for studying invariant manifolds in a number of different contexts and applications. The method is constructive and leads to efficient and accurate numerics. The main idea is to examine an invariance equation describing the invariant manifold. One plugs in a certain formal series into the invariance equation and solves the problem via a power matching scheme.

Given an analytic vector field $F: \R^4 \to \R^4$ with $F(0)=0$, and the conditions for a 2-dimensional unstable manifold, the parameterization method seeks an embedding $P: B_1^2(0) \to R^4$, with $P(0)=0$, $B_1^2(0)$ the unit disk centered at $0$, and a linear vector field $R: B_1^2(0)\to \R^2$ such that
\begin{equation}\label{eq:invariance}
F \circ P(x) = DP(x) Rx.
\end{equation}
In other words, the goal is to obtain a conjugacy between the flow on an invariant manifold of interest and the associated linear problem restricted to the unit disk. \correction{comment 16}{We always restrict $P$ to the unit disk for the sake of numerical stability.}

In our case, the spectrum of $DF(0)$ is composed of distinct eigenvalues $\{ \lambda_i^u \}_{i=1}^2$, $\{ \lambda_i^s \}_{i=1}^2$, where the real parts are positive for $\lambda_i^u $ and negative for $\lambda_i^s$ (the superscripts $s$ and $u$ stand for stable and unstable respectively). We can then take $R$ to be the usual (real) matrix associated with two complex conjugate eigenvalues.

 The image of $P$ is a smooth 2-dimensional manifold, and since it will be invariant by Equation \eqref{eq:invariance}, it is an invariant manifold for $0$ in $\R^4$. Furthermore, if we denote by $\Phi:\R^4 \times \R \to \R^4$ the flow generated by $F$ and note that from \cite{haro_canadell_2018}, $P$ must satisfy Equation \eqref{eq:invariance} if and only if \[\Phi(P(x),t)=P(e^{Rt}(x)),\]
 for all $x$ and $t$ for which it is defined, and thus it is a local unstable manifold for the vector field $F$ at $0$.

The global unstable manifold can then be obtained by integrating forward trajectories from this local unstable manifold. In the next subsection, we describe a systematic way of doing this which will allow to give a convenient description of the set of exit trajectories we claim to be significant in understanding the escape hatch of the stochastic system.

\subsection{Generating the Full \texorpdfstring{$W^u(O)$}{WuO}}
The circle $\mathcal{K}=P(\partial B^2_1(0))$ inside the local unstable manifold will be used to generate the full $W^u(O)$. An explicit parameterization of  $\mathcal{K}$ will be given that also gives more insight into how the parameterization method works for approximating the local unstable manifold. 

The first step if to extend the real-analytic vector field $F$ on $\R^4$ to a complex analytic vector field on $\mathbb{C}^4$. A parameterization $\hat{P}(z_1,z_2)$ of $W^u(O)$ in $\mathbb{C}^4$ is then sought. The map $\hat{P}$ is taken to be a double infinite sum $$\hat{P}(z_1,z_2) = \sum_{m=0}^{\infty}\sum_{n=0}^{\infty}\alpha_{mn}z_1^mz_2^n.$$ To relate this to the flow, we can think of $z_1 = z_1^0e^{\mu_1 t}$, $z_2 = z_2^0e^{\mu_2 t}$, $z_1^0,z_2^0\in \mathbb{C}$, and $\mu_1$ and $\mu_2$ are the (complex) unstable eigenvalues of the Jacobian of Equation \eqref{eq:hamil} evaluated at the fixed point $O$. Since $\mu_1$ and $\mu_2$ are complex conjugate pairs, as are their eigenvectors, the value of $\hat{P}(z_1,z_2)$ is real if $z_1$ and $z_2$ are complex conjugate pairs. Another way of saying this is that when $\mu_1$, $\mu_2$ are complex conjugates, the coefficients of $\hat{P}$ have the symmetry $\bar\alpha_{nm}=\alpha_{mn}$ for all $m+n \geq 2$. Choosing complex conjugate eigenvectors $\xi_1$ and $\xi_2$ and setting $\alpha_{00}=O$, $\alpha_{01}=\xi_1$, $\alpha_{10}=\xi_2$ enforces the symmetry to all orders. The power series solution of $\hat{P}$ has complex coefficients, but we get the real image of $\hat{P}$ by taking complex conjugate variables. That is, we define, for example, for the real parameters $\zeta_1, \zeta_2$, the function:
\[P(\zeta_1, \zeta_2)=\hat{P}(\zeta_1 + i \zeta_2, \zeta_1 - i \zeta_2),\]
which parameterizes the real unstable manifold. Further, we scale the eigenvectors we use in the construction of $P$, which affect $z_1^0,z_2^0$, so that the double infinite sum converges whenever $|z_1|,|z_2|\leq 1$.

We parameterize $\mathcal{K}$ with $\theta \in[0,2\pi)$ by setting $z_1(\theta) = \cos(\theta)+i\sin(\theta)$ and $z_2(\theta) = \cos(\theta)-i\sin(\theta))$. 
We then define
\begin{equation}
    \mathcal{K}:= \{\hat{P}(z_1(\theta),z_2(\theta)): \theta \in [0,2\pi)\}.
\end{equation}\label{eq:angle}
That is, $\mathcal{K}$ is a simple closed curve in $W^u(O)$, parameterized by $\theta\in[0,2\pi)$, whose projection onto the $(x,y)$-plane is an ellipse enclosing $(0,0)$. We denote by $\mathcal{K}(\theta)$ the point of $\mathcal{K}$ corresponding to $\theta$. The curve $\mathcal{K}$ is depicted as a green circle in Figure \ref{fig:Hetero_River1}. 

\subsection{Computing \texorpdfstring{$W^s(\Gamma)$}{WsG}}
Next, in order to compute the stable manifold of the periodic orbit, we proceed as follows. We first divide  up the periodic orbit into $N$ points $\Gamma_k$, $0 \leq k \leq N$. To each point there corresponds a time $\tau_k$ such that \correction{comment 5}{$\Gamma_k = \Gamma(\tau_k)$,} where here $\Gamma(t)$ is the periodic orbit.
\begin{figure}[tbp]
\begin{center}
\includegraphics[height=6cm]{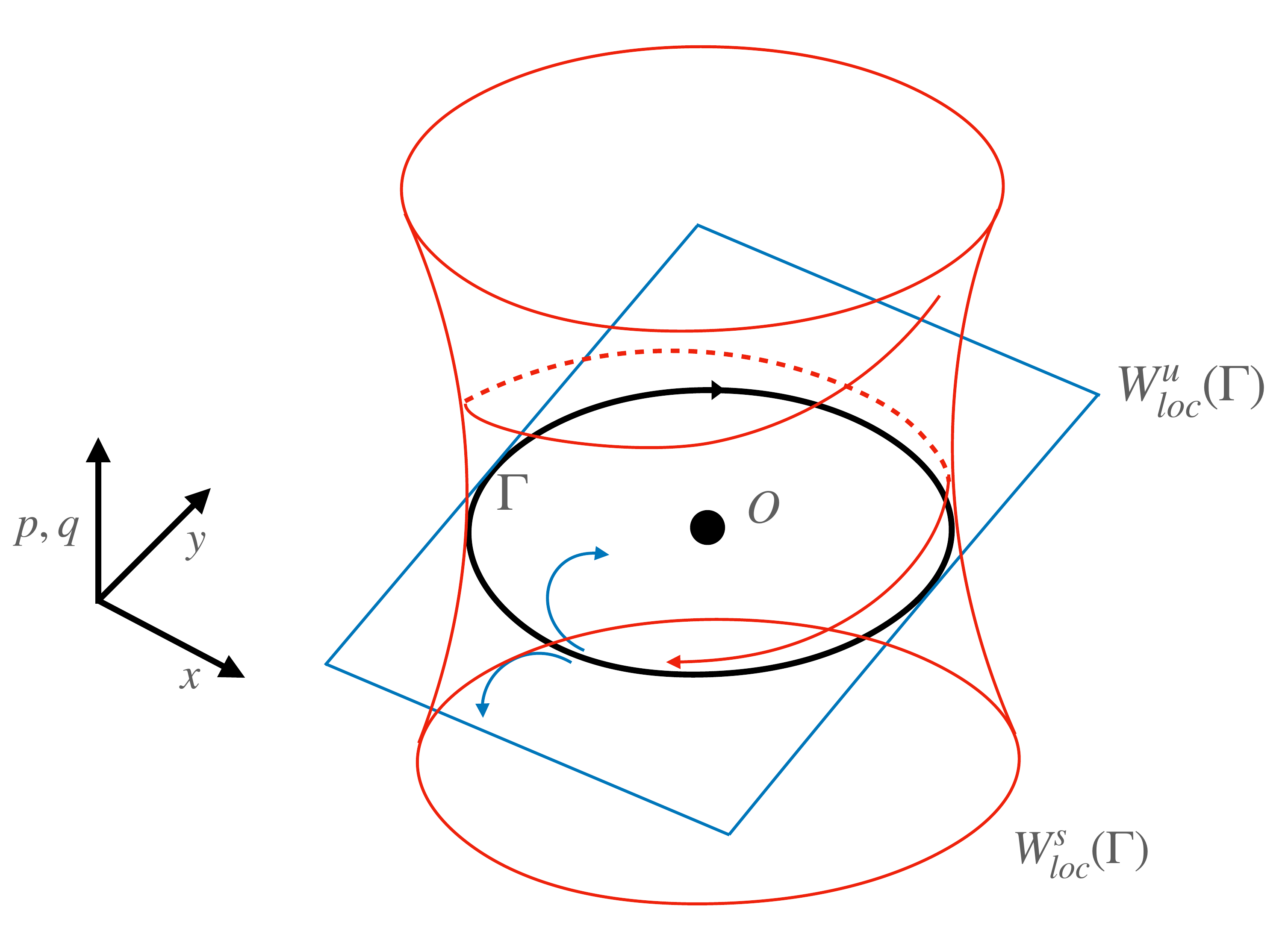}
\end{center}
\caption{Schematic of the flow near the periodic orbit $\Gamma$.}
\label{fig:Periodic-Schem}
\end{figure}

Let  $\xi_{s_0}$ be the eigenvector associated with the stable eigenvalue of the monodromy matrix. This vector is tangent to the stable manifold at $\Gamma_0$. The stability type of the state transition matrix of any point $\Gamma_k$ on the periodic orbit is independent of $k$ and the eigenvectors can be computed if the state transition matrix is known at a base point $\Gamma_k$. They are just the eigenvectors of the monodromy matrix (which is computed at $\Gamma_0$) multiplied by the state transition matrix of the new point $\Gamma_k$,
    \[
    \xi_{s_k} = \phi(0, \tau_k)\xi_{s_0}.
    \]
Then for each $\Gamma_k$ we compute the state transition matrix $\phi(0, \tau_k)$ and from this obtain the tangent space to the stable manifold there, by the formula above. Set a tolerance 
\correction{comment 17}{$\nu$}. If $\nu$ is small enough then 
    \[
    x_{s_k}(0) = \Gamma_k \pm \nu \xi_{s_k}
    \]
    are points very nearly on the stable manifold. We integrate these initial conditions over some time interval $[0, T_f ]$ obtaining the orbits $x_{s_k}(t)$. Then $t$ along these orbits is the second coordinate on the manifold.  By varying $k$ and $t$ we obtain a good approximation of the stable manifold. A schematic of the flow near $\Gamma$ can be seen in Figure \ref{fig:Periodic-Schem}.

\subsection{Obtaining the Heteroclinic Orbits}

Finally, we compute the heteroclinic orbits resulting from the transverse intersections of $W^u(O)$ and $W^s(\Gamma)$. 
This is achieved in two steps. We first compute the heteroclinic orbits by looking at the transverse intersections of the invariant manifolds and using an algorithm to find the two closest points (one from a trajectory from the stable manifold of the periodic orbit, and another from a trajectory from the unstable manifold of the fixed point at the origin). So we initially compute the heteroclinics using those two points integrating forward and backward in time. We then use that trajectory to find its corresponding angle on the parameterized circle $\mathcal{K}$ (as an initial guess) which we then refine to compute the heteroclinic orbit but now by integrating a single point, forward and backward in time. 

The result is illustrated for IVDP in Figure \ref{fig:stabunstab}. In the first two parts (a) and (b), we compute  $W^u(O)$ for Equation \eqref{eq:hamil} and $W^s(\Gamma)$  respectively. The parameters are set as $\sqrt{\varepsilon}=0.3$, and $\eta=0.5$. In Figure \ref{fig:stabunstab} (c), we delineate the transverse intersections of $W^u(O)$ and $W^s(\Gamma)$ in green and black in $(x,y,p)$ space. In Figure \ref{fig:stabunstab} (d), we remove most of $W^u(O)$ and $W^s(\Gamma)$ leaving only the heteroclinic orbits (green and black curves), and can clearly see that the intersections of the two manifolds occur along 4 distinct curves.

\begin{figure}[tbp]
\begin{center}
$
\begin{array}{cc}
\includegraphics[scale=0.4]{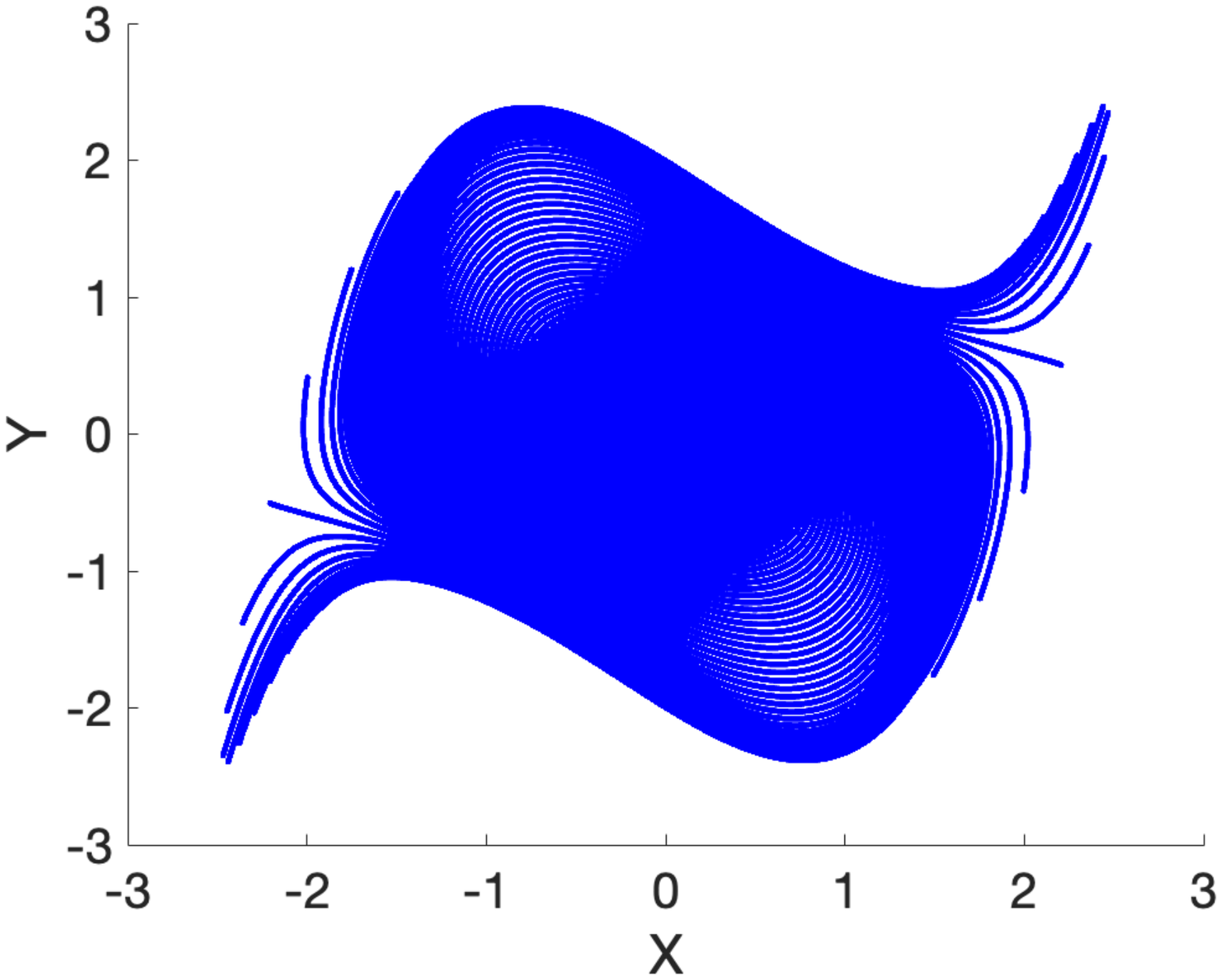} &
 \includegraphics[scale=0.4]{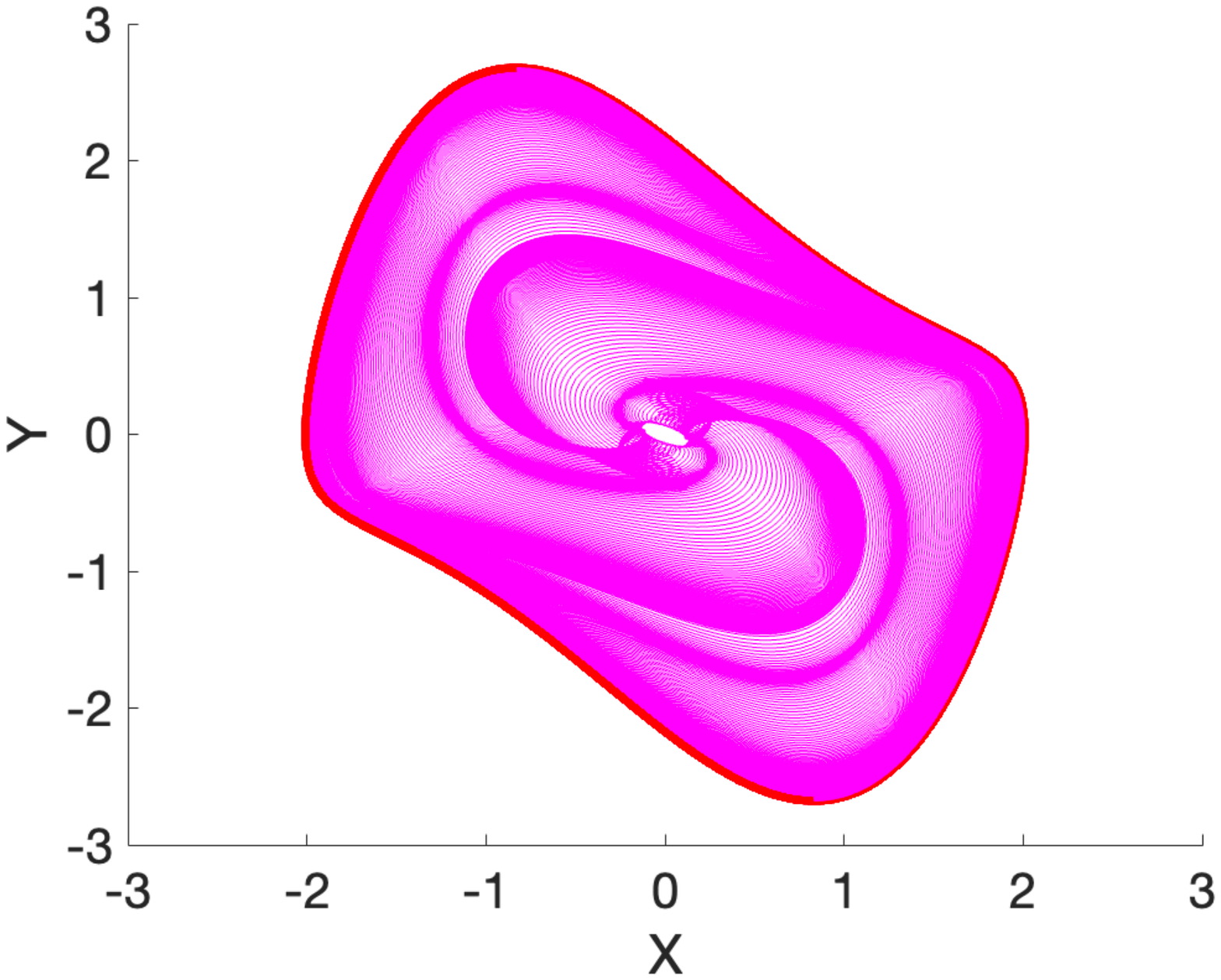}\\
 (a) & (b)\\
\includegraphics[scale=0.4]{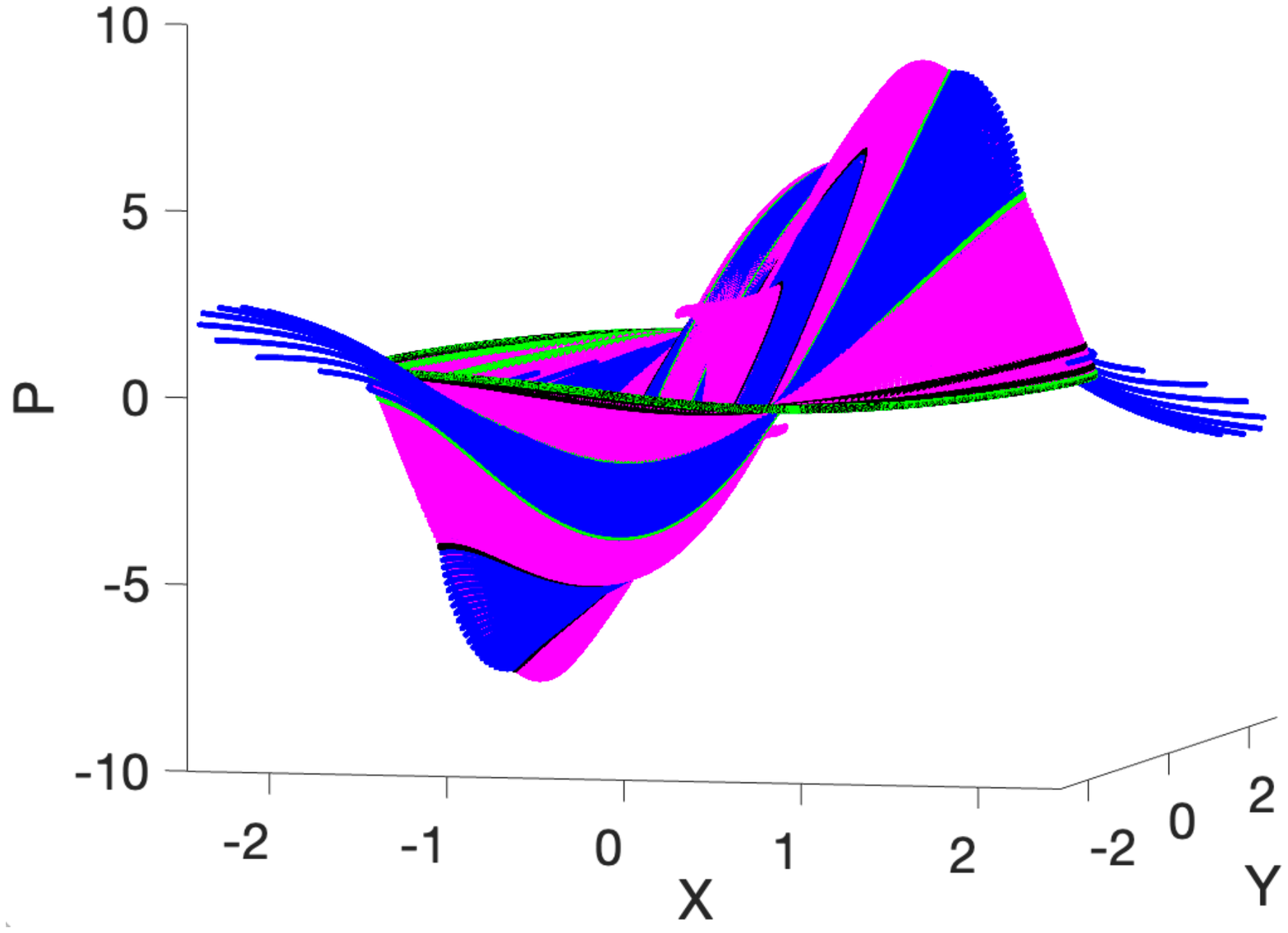} &
 \includegraphics[scale=0.4]{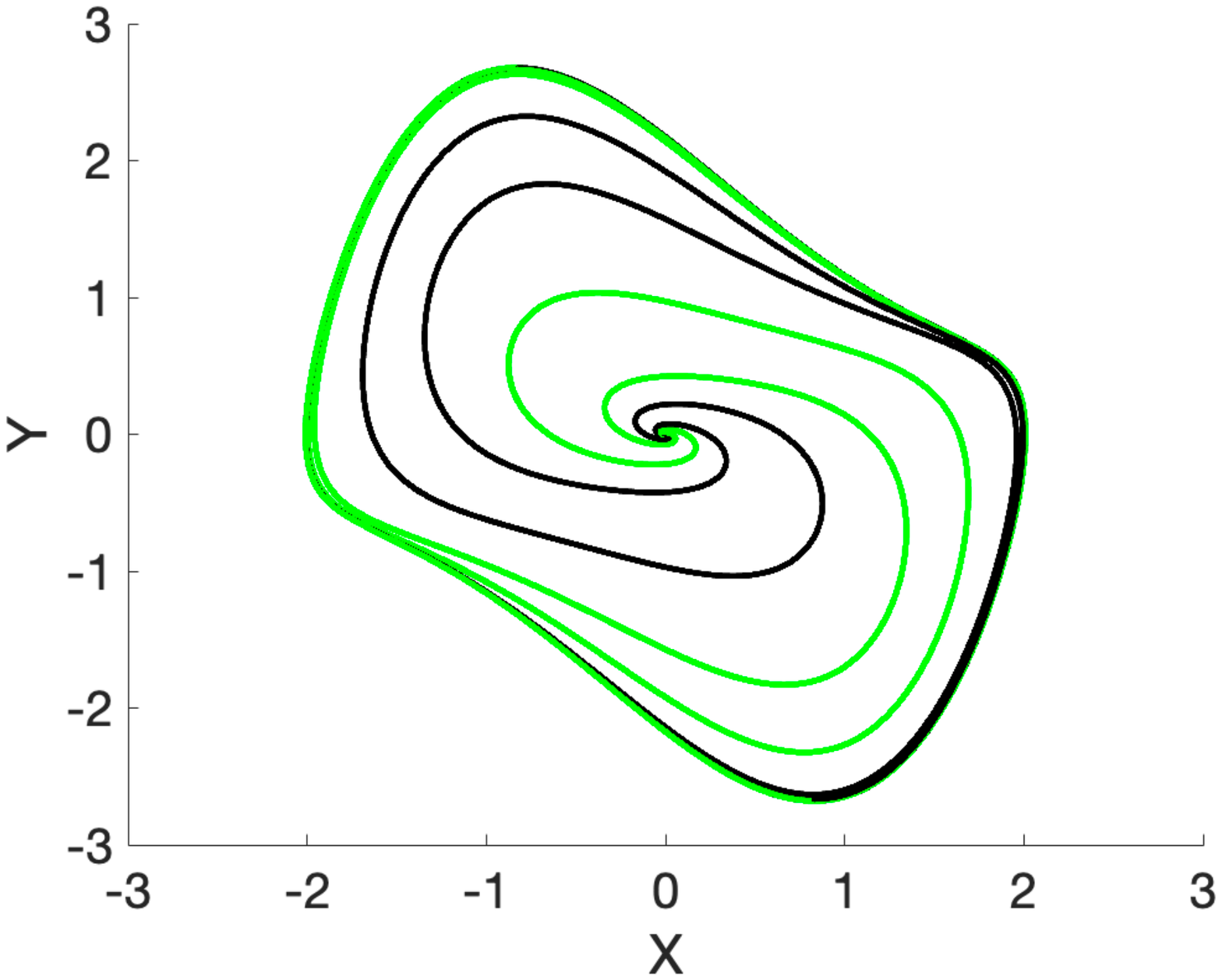}\\ 
 (c) & (d) 
\end{array}
$
\end{center}
\caption{(a) Plot of $W^u(O)$, and (b) Plot of $W^s(\Gamma)$ for 
$\eta=0.5$. One remark is that the phase portrait of Equation \eqref{eq:hamil} when projected onto the $(x,y)$ space is independent of noise strength so that one can recover one orbit for different noise values with the appropriate scaling in $(p,q)$ coordinates. In (c) we plot $W^u(O)$ (blue) and $W^s(\Gamma)$ (magenta) and note their transverse intersections in (x,y,p) space. The heteroclinic orbits are plotted in (d).}
\label{fig:stabunstab}
\end{figure}

\section{ Computing The Maslov Index and Conjugate Points
\label{sec:conjugatep}}

A conjugate point occurs along a trajectory in $W^u(O)$ when the tangent space to the invariant manifold at a point on that trajectory has a degenerate projection onto $(x,y)$ space, see Definition \ref{def:conj_pt}. Such points can be found by tracking the tangent space to $W^u(O)$ along trajectories in $W^u(O)$. 

\subsection{The Space of Lagrangian Planes} Individual tangent vectors will satisfy the linearized equations of Equation \eqref{eq:hamil}. Since Equation \eqref{eq:hamil} is Hamiltonian, the linearized system can be written in the form
\begin{equation}\label{eq:evol}
        \dot{U} = AU, \quad U \in R^{4}, 
    \end{equation}
    where $A=JB$, with
    \[ B =  \mat{ 
pf_{xx}+qg_{xx}& pf_{xy}+qg_{xy}&f_x&g_x\\
pf_{yx}+qg_{yx}& pf_{yy}+qg_{yy}&f_y&g_y\\
f_x&f_y & 1&0\\
g_x&g_y&0&1
 }
\]
evaluated on a solution $\left( x(t),y(t),p(t),q(t)\right) $ of Equation \eqref{eq:hamil}, and $J$ is the usual $4\times 4$ symplectic matrix
\[J=\begin{pmatrix}
0 & I_2\\
-I_2 & 0
\end{pmatrix}\]
with $I_2$ the $2\times 2$ identity. Note that $B$ is symmetric, which is a consequence of the Hamiltonian structure. 

Tangent spaces to invariant manifolds in a Hamiltonian system have a special property, called Lagrangian.

\begin{definition}\label{def:lagrange} A 2D subspace $\Pi \subset \R^4$ is said to be {\em Lagrangian} if $\langle JX,Y \rangle=0$ for all $X,Y \in \Pi$.\end{definition}

The collection of all 2D Lagrangian subspaces of $\R^4$ is called the space of Lagrangian planes, and denoted $\Lambda(2)$. It can be viewed as a submanifold of the Grassmannian of 2-planes in $\R^4$. It has the amazing property  that its fundamental group is the integers, $\pi_1(\Lambda(2)) = \mathbb{Z}$. This allows one to define a phase in $\Lambda(2)$ and the standard definition of the Maslov Index is that it counts the winding of this phase. The fundamental group of the full Grassmannian is $\mathbb{Z}_2$ and so that has no winding index, and thus the Lagrangian property is critical in making the Maslov Index work. We want to relate this characterization of the Maslov Index as a winding number to the conjugate point definition given in Definition \ref{def:conj_pt}. 

An index, such as the Maslov Index, can be represented by an intersection number. The simplest analogue here is the winding of a curve in the punctured plane corresponding to the intersection number with a half-line (such as the positive $y$-axis). For the object that will represent the Maslov Index through an intersection number with a curve in $\Lambda(2)$, we first need to define the Dirichlet subspace.

\begin{definition} The Dirichlet subspace of $\R^4$ is \[
    \mathcal{D}=\{(u,w)\in \R^4 :u=0\}. \]
 \end{definition}   
 \noindent It is not hard to check that    $\mathcal{D} \in \Lambda(2)$, i.e., it is Lagrangian. The Dirichlet subspace is key as a conjugate point occurs exactly when the tangent space to $W^u(O)$ at a point of a  trajectory non-trivially intersects $\mathcal{D}$. The train of $\mathcal{D}$, which we denote $\mathcal{S}(\mathcal{D})$  is the set of 2D subspaces in $\R^4$ that non-trivially intersect $\mathcal{D}$. Although it is an awkward way to state it, a conjugate point occurs exactly when the tangent space to $W^u(O)$ intersects the train $\mathcal{S}(\mathcal{D})$ in $\Lambda(2)$.  
 
 For our case, the Maslov Index, as a winding number, can be realized as the intersection of the curve of tangent spaces along the trajectory in $W^u(O)$ with $\mathcal{S}(\mathcal{D})$. But that is exactly the number of conjugate points.

\subsection{Pl{\"u}cker Coordinates}
Coordinates on the space of planes can be given that allow us to track the tangent space to $W^u(O)$ along a trajectory. The key is to form the 
Pl{\"u}cker coordinates \correction{comment 19}{\cite{postnikov1982lectures, colonius2014dynamical}} of an individual plane (2D subspace) in $\R^4$.

  Let $\Pi$ be a plane spanned by $v_1$ and $v_2$ with:
    \eq{
    v_1=\mat{v_{11}\\v_{12}\\v_{13}\\v_{14}} \textrm{and }
    v_2=\mat{v_{21}\\v_{22}\\v_{23}\\v_{24}}
    }{\notag}
    We set:
    \eq{
    \rho_{ij} = \begin{vmatrix}
    v_{1i} & v_{1j}\\
    v_{2i} & v_{2j}
    \end{vmatrix} = dx_i \wedge dx_j(v_1,v_2), \quad 1\leq i,j \leq 4, \quad i \neq j.
    }{\notag}
    For our particular problem, $\rho_{12}=dx \wedge dy,\rho_{13}=dx \wedge dp,\rho_{14}=dx \wedge dq,\rho_{23}=dy \wedge dp,\rho_{24}=dy \wedge dq,\rho_{34}=dp \wedge dq$.
    
How the Pl{\"u}cker coordinates of a plane vary can then be captured by an ODE governing the variation in time of the plane's Pl{\"u}cker coordinates. This can be calculated using the properties of differential forms from Equation \eqref{eq:evol} with $U=(dx,dy,dp,dq)$. 

    \eq{
\frac{d \hat{U}}{dt} = B(x(t),y(t),p(t),q(t))\hat{U},
}{\label{eq:conj2}}
where $\hat{U}=(\rho_{12},\rho_{13},\rho_{14},\rho_{23},\rho_{24},\rho_{34})$ and 

\eq{
B(x,y,p,q) = \mat{ f_x+g_y& 0&1 &-1&0&0\\
-pf_{xy}-qg_{xy}&0&-g_x&f_y&0&0\\
-pf_{yy}-qg_{yy}&-f_y&f_x-g_y&0&f_y&1\\
pf_{xx}+qg_{xx}&g_x&0&-f_x+g_y&-g_x&-1\\
pf_{yx}+qg_{yx}&0&g_x&-f_y&0&0\\
0&pf_{xy}+qg_{xy}&-pf_{xx}-qg_{xx}&pf_{yy}+qg_{yy}&-pf_{xy}-qg_{xy}&-f_x-g_y}.
}{\notag}
Note that this is evaluated along a trajectory $(x(t),y(t),p(t),q(t))$ which we are taking to lie in $W^u(O)$. To restrict to the coordinates of Lagrangian planes, we note that a plane is Lagrangian if (and only if) 
\eq{\rho_{13} +\rho_{24}=0.}{\notag}

A conjugate point can be conveniently described in Pl{\"u}cker coordinates.

\begin{lemma} The time $t=\tau$ is a conjugate point for a trajectory $z(t)=(x(t),y(t),p(t),q(t))$ in $W^u(O)$ if $\rho_{12}(\tau)=0$ for the Pl{\"u}cker coordinates of $T_{z(\tau)}W^u(O)$.
\end{lemma}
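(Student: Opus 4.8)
The plan is to unwind Definition \ref{def:conj_pt} together with the definition of the Pl\"ucker coordinate $\rho_{12}$ and observe that they encode exactly the same condition on the plane $\Pi := T_{z(\tau)}W^u(O)$. Fix any basis $\{v_1,v_2\}$ of the $2$-plane $\Pi \subset \R^4$, written in coordinates $(x,y,p,q)$ as $v_k=(v_{k1},v_{k2},v_{k3},v_{k4})$, and let $\pi:\R^4\to\R^2$ be the projection $(x,y,p,q)\mapsto(x,y)$. The restriction $\pi|_\Pi:\Pi\to\R^2$ is a linear map between two spaces of dimension $2$, so it is onto if and only if it is an isomorphism, if and only if its matrix with respect to $\{v_1,v_2\}$ and the standard basis of $\R^2$, namely $\bigl(\begin{smallmatrix} v_{11} & v_{21}\\ v_{12} & v_{22}\end{smallmatrix}\bigr)$, is nonsingular. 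Its determinant is $v_{11}v_{22}-v_{12}v_{21}=\rho_{12}$ by the definition of $\rho_{12}$. Hence $\pi|_\Pi$ fails to be of full rank — i.e.\ $\tau$ is a conjugate point in the sense of Definition \ref{def:conj_pt} — precisely when $\rho_{12}(\tau)=0$. (This argument in fact gives the converse as well.)

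Second, I would record that this condition does not depend on the chosen basis of $\Pi$: a change of basis by $M\in\mathrm{GL}(2,\R)$ multiplies every $\rho_{ij}$, and $\rho_{12}$ in particular, by $\det M\neq 0$, so the vanishing of $\rho_{12}$ is a genuine property of the plane $\Pi$ rather than of a frame in it. Equivalently, $\rho_{12}=0$ says exactly that $\Pi$ contains a nonzero vector whose $(x,y)$-components vanish, i.e.\ $\Pi\cap\mathcal{D}\neq\{0\}$ for the Dirichlet subspace $\mathcal{D}$, which is the characterization of a conjugate point as an intersection of the curve of tangent planes with the train $\mathcal{S}(\mathcal{D})$ used above to identify the conjugate-point count with the Maslov index. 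The multiplicity of the conjugate point, as used in the definition of the Maslov Index, is $\dim(\Pi\cap\mathcal{D})=2-\mathrm{rank}\,\pi|_\Pi$, which is the ``multiplicity of the projection'' referred to after Definition \ref{def:conj_pt}.

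There is essentially no obstacle here: the lemma is a direct translation between the geometric definition of a conjugate point and the algebraic definition of $\rho_{12}$. The only two points that merit a sentence of care are (i) that $\Pi$ and $\R^2$ have equal dimension, so ``not onto'' is equivalent to the vanishing of the single $2\times 2$ minor $\rho_{12}$, and (ii) the basis-independence noted above, which is precisely what makes ``$\rho_{12}(\tau)=0$'' a legitimate statement about $T_{z(\tau)}W^u(O)$. In the numerics one additionally uses that the evolution equation \eqref{eq:conj2} is the correctly induced flow on $\Lambda^2\R^4$, so that the tracked quantity $\rho_{12}(t)$ really is the first Pl\"ucker coordinate of the transported tangent plane along $z(t)$; but that is a feature of the construction set up earlier, not something this lemma needs to prove.
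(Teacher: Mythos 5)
Your argument is correct and is exactly the direct translation the paper intends: the paper states this lemma without proof, treating it as immediate from Definition \ref{def:conj_pt} together with the definition of $\rho_{12}$ as the $(x,y)$-minor of a frame for $T_{z(\tau)}W^u(O)$, which is precisely what you spell out (including the basis-independence and the identification with $\Pi\cap\mathcal{D}\neq\{0\}$). Nothing is missing; your version even records the multiplicity statement and the converse direction.
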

The trajectories in $W^u(O)$ are parameterized by angles $\theta$ that determine a point on the simple closed curve $\mathcal{K}\subset W^u(O)$. The methodology for finding conjugate points along a trajectory that passes through $\mathcal{K}(\theta)$ is as follows:
\begin{enumerate}
\item Compute the trajectory $z(t)$ backwards from $\mathcal{K}(\theta)$ until close to the fixed point at $O$.
\item Form Pl{\"u}cker coordinates of the unstable subspace of Equation \eqref{eq:hamil} at $O$ and initialize Equation \eqref{eq:conj2} with these coordinates at the time and point found in the first step.
\item Integrate Equation \eqref{eq:conj2} forward in time and find the values of $t$ where $\rho_{12}=0$.
\end{enumerate}

\begin{figure}[tbp]
\begin{center}
$
\begin{array}{cc}
\includegraphics[scale=0.25]{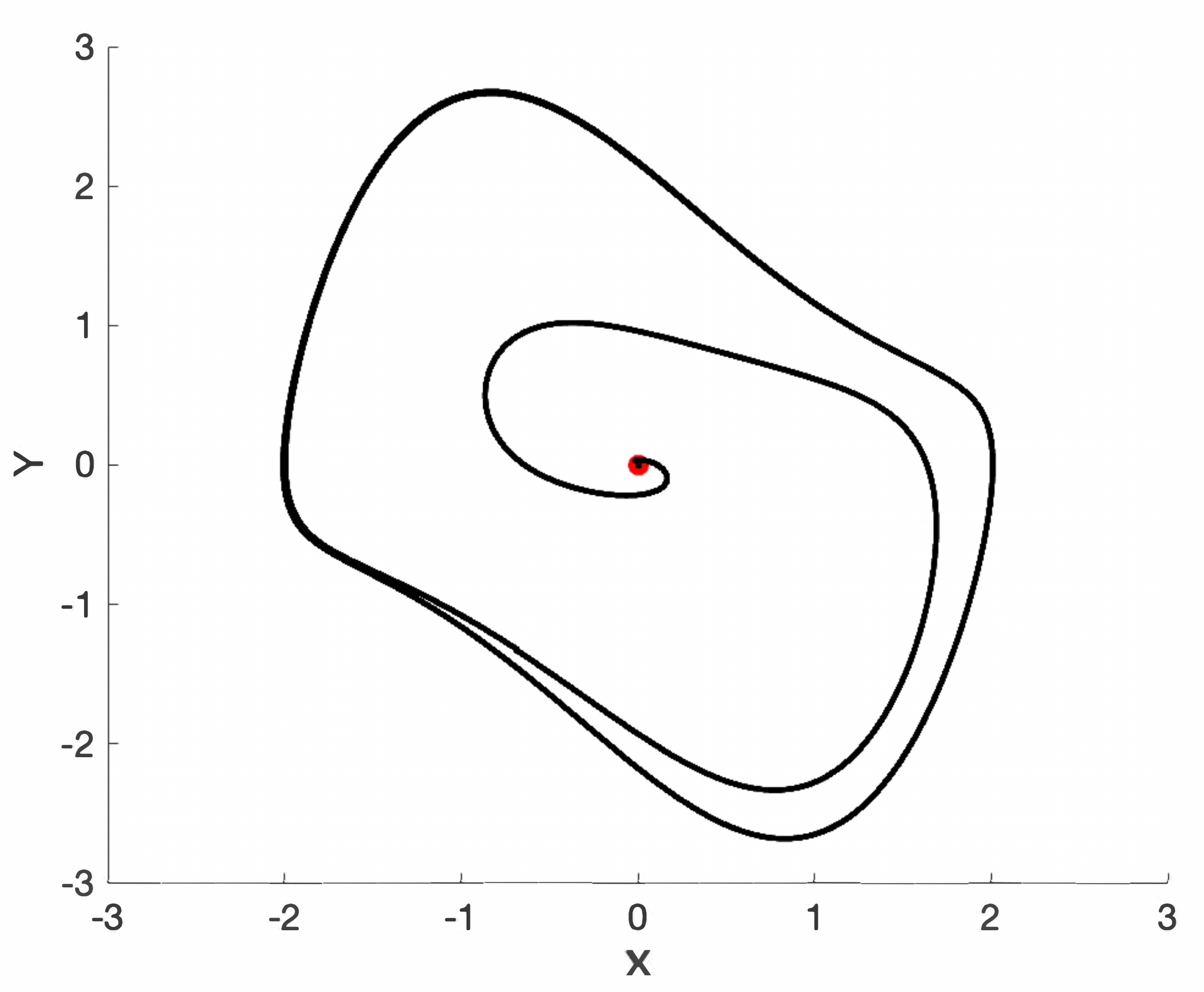} &
 \includegraphics[scale=0.25]{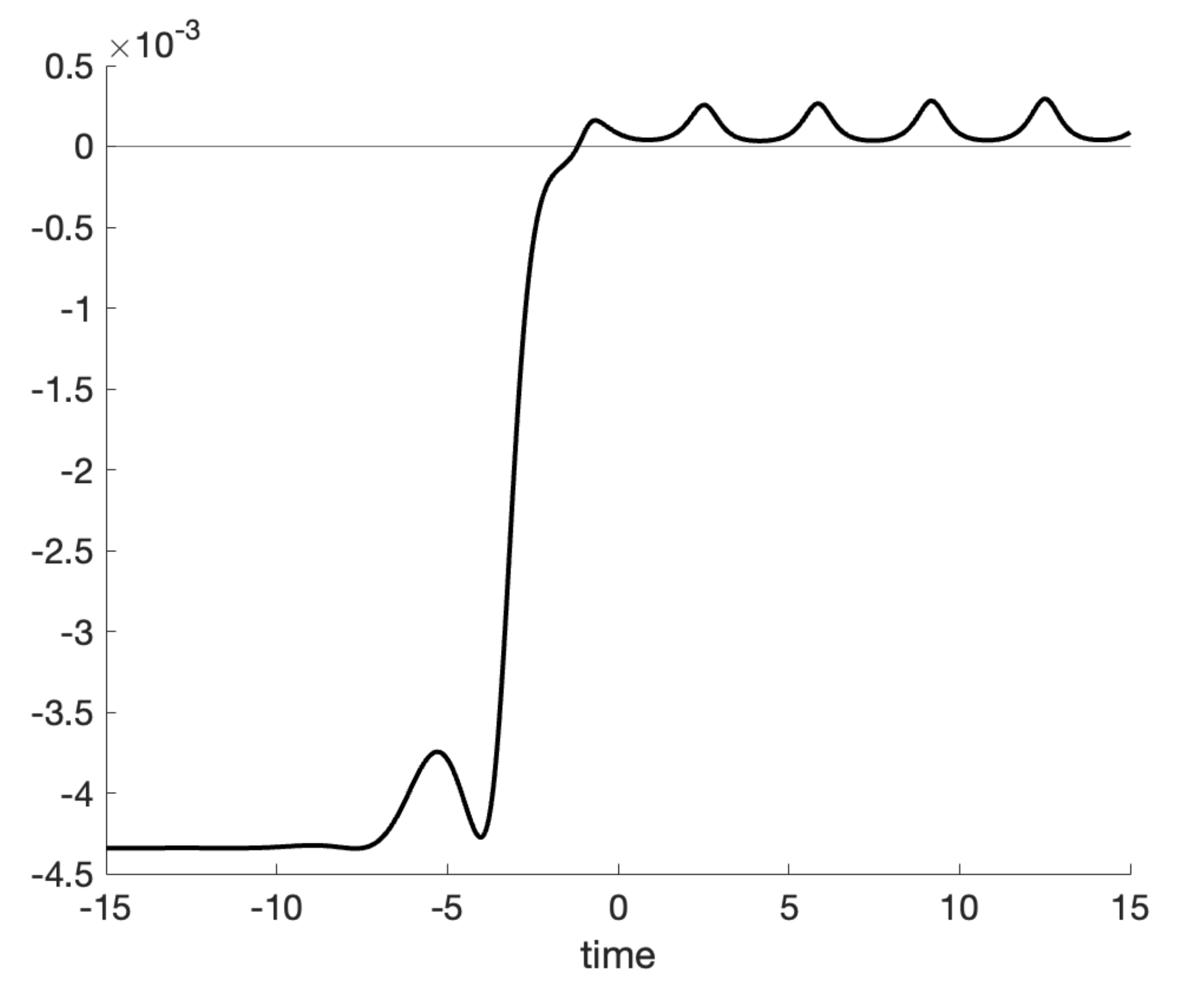}\\ (a) & (b)\\
\includegraphics[scale=0.25]{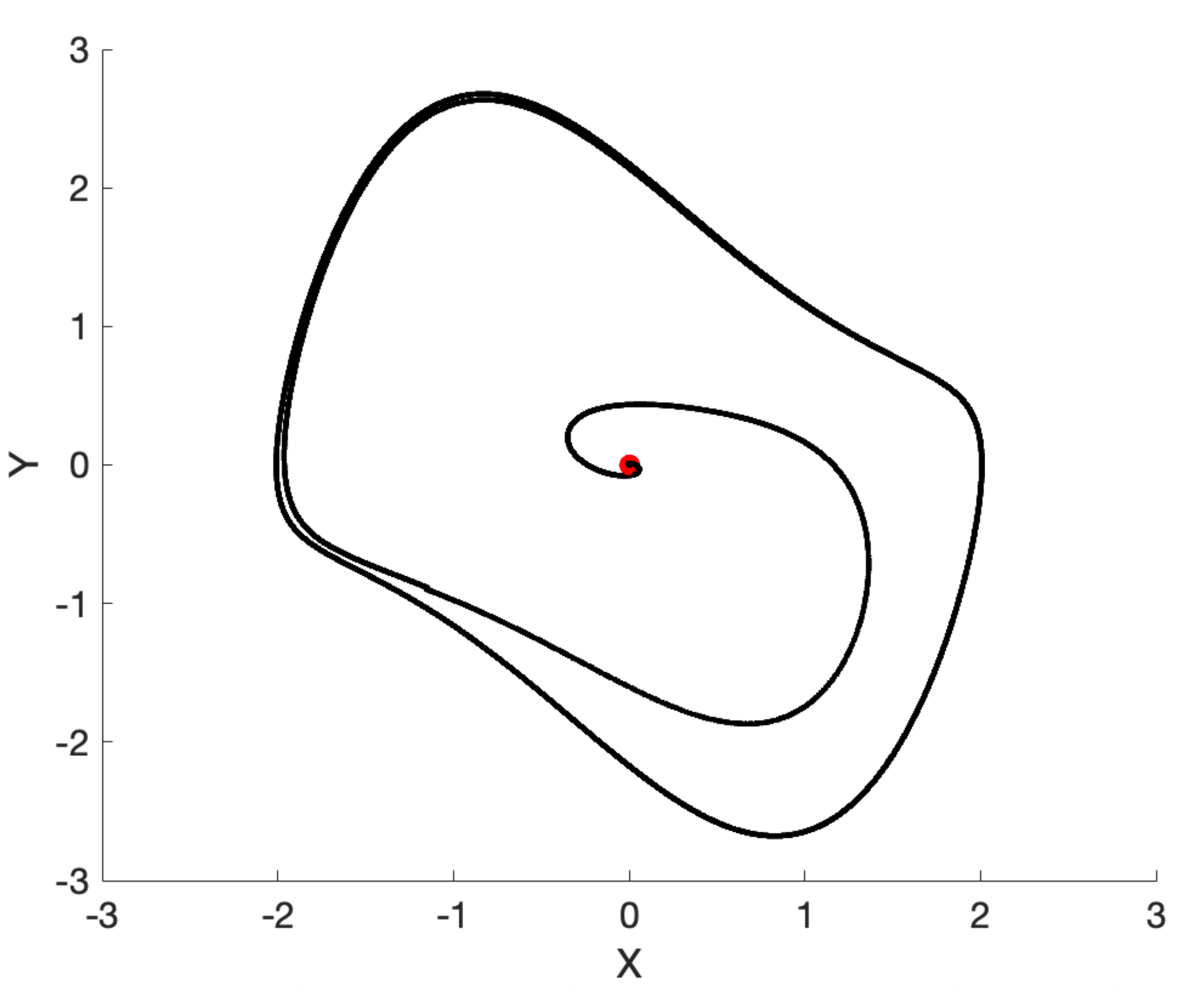} &
 \includegraphics[scale=0.25]{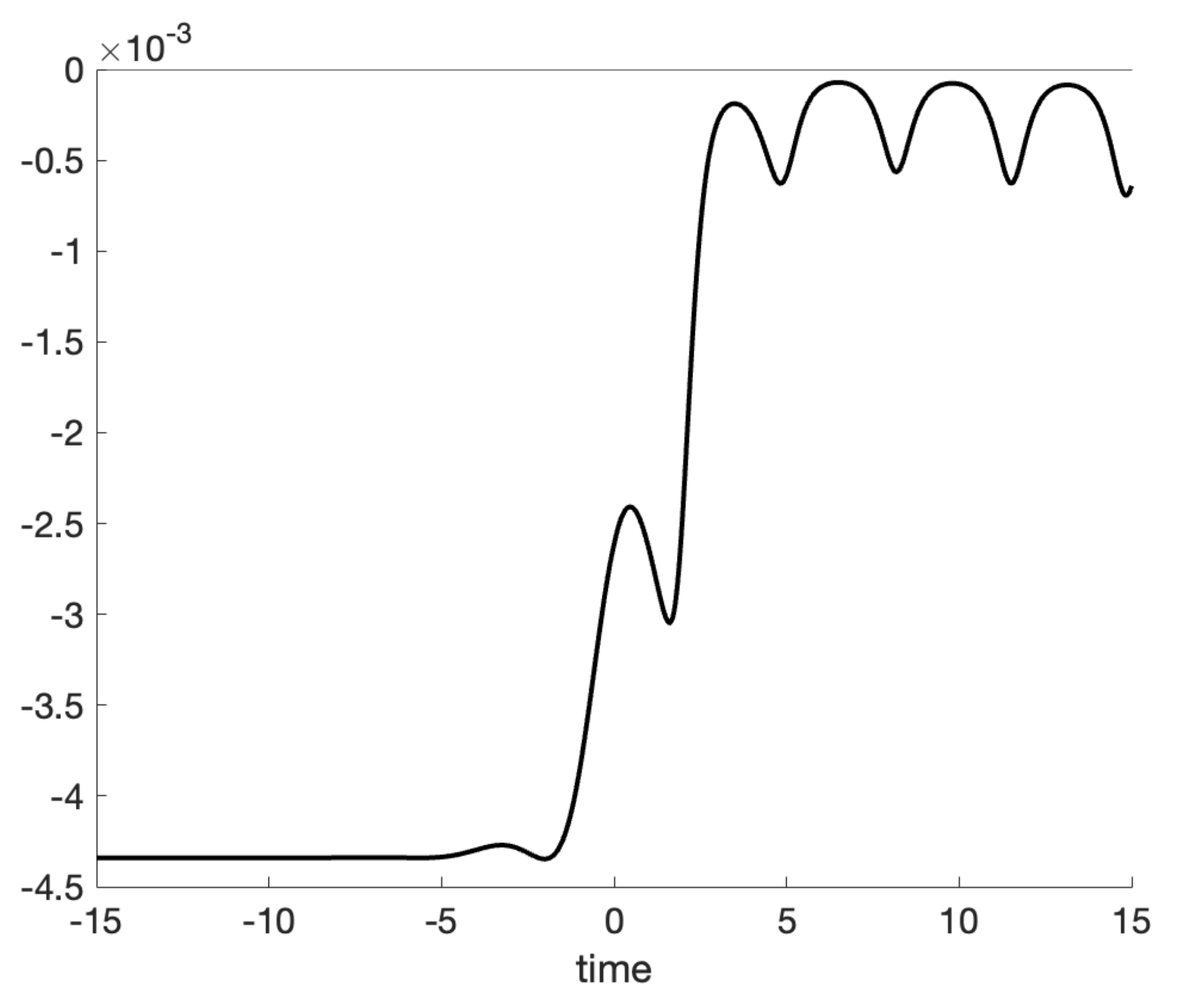}\\ (c) & (d)\\
\end{array}
$
\end{center}
\caption{(a) Plot of a heteroclinic orbit of Equation \eqref{eq:hamil} for 
$\eta=0.5$. (b) Plot where we detect a conjugate point (where $\rho_{12}=0$) for the heteroclinic orbit on the top left. (c) Plot of another heteroclinic orbit of Equation \eqref{eq:hamil} for the same parameters and (d) reveals that there are no conjugate points for the associated heteroclinic orbit on the left for the time interval that we specified. 
}
\label{fig:Hetero-Maslov}
\end{figure}

In Figure \ref{fig:Hetero-Maslov} we illustrate this for two key trajectories of IVDP, namely the heteroclinic orbits that we computed from Section \ref{sec:heteroclinics}. The plots on the right indicate where we detect a conjugate point ($\rho_{12}=0$) for the time interval that we specified, which we tracked from a small neighborhood of $O$ to the periodic orbit $\Gamma$ of Equation \eqref{eq:hamil} for each of the associated heteroclinic orbits. 

These computations confirm that one has Maslov Index 0 (the one shown in panel (c)) and the other has Maslov Index 1 (panel (a)).  The former is thus $\mathcal{H}_1$ and the latter $\mathcal{H}_2$.

\section{Trajectories Exiting  Over the Periodic Orbit
\label{sec:river}}

Under assumption (A3), there are two heteroclinic connections, $\mathcal{H}_1$ and $\mathcal{H}_2$, between the fixed point and $\Gamma$. We set $\theta_1$ and  $\theta_2$ to be the angles for points on $\mathcal{K}$ at which $\mathcal{H}_1$ and $\mathcal{H}_2$ pass through $\mathcal{K}$ respectively. These are depicted in Figure \ref{fig:Hetero_River1}. 

According to Lemma \ref{lemma:exit}, there are $\theta$-values near $\theta_1$ for which the associated trajectories pass over $\Gamma$, or more precisely through $\mathcal{T}_{\Gamma}$, and so are exit trajectories. The angles $\theta_1$ and $\theta_2$ divide the circle into two parts. Without loss of generality, we can assume that these exit trajectories correspond to $\theta$ values between $\theta_1$ and $\theta_2$.  We will make the further assumption that all trajectories with angles between $\theta_1$ and $\theta_2$  exit $\Gamma$.
\begin{description}
    \item[(A5)]  Every trajectory associated with angles $\theta \in (\theta_1, \theta_2)$ crosses $\Gamma$ when projected on the $(x,y)$ space.
\end{description}

While this seems like a strong assumption, it captures the situation we are imagining. We expect that between two heteroclinics the unstable manifold will leak out, but the complexity of the problem makes that hard to prove in general and so we make it as an assumption that can be verified numerically in examples as needed.  
\begin{figure}[tbp]
\begin{center}
\includegraphics[height=6cm]{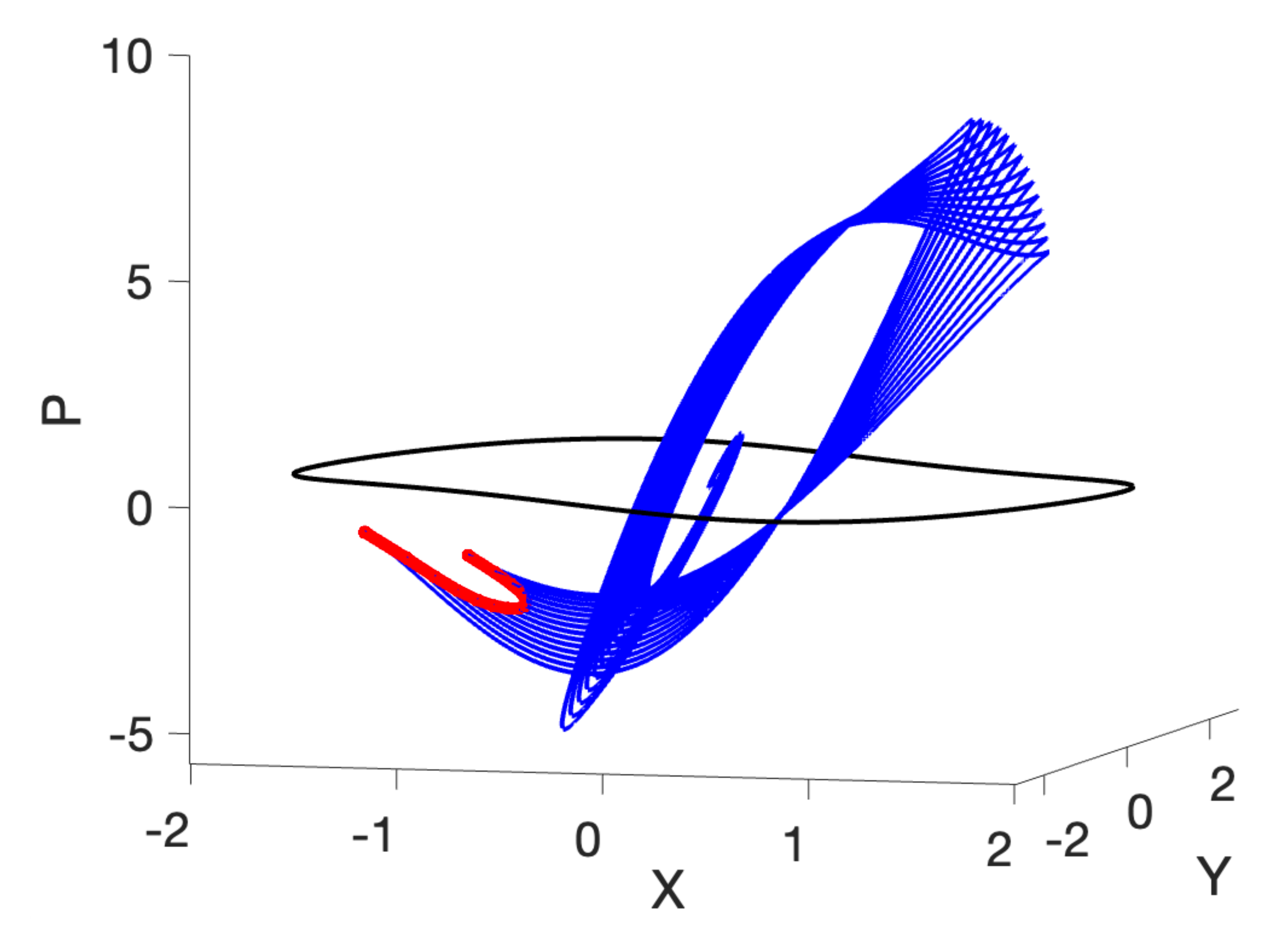}
\end{center}
\caption{Trajectories associated with $\mathcal{R}$ (in blue) reaching $\Gamma$ (in black) in $(x,y,p)$ space. The red curve represents the intersection of the torus $\mathcal{T}_{\Gamma}$ and trajectories associated with the river $\mathcal{R}$ projected in $(x,y,p)$ space.}
\label{fig:River3D}
\end{figure}

\subsection{The River}
 The term {\em River} will be used to describe the set of trajectories on $W^u(O)$ that cross $\Gamma$ with $\theta$ values between $\theta_1$ and $\theta_2$. The curves $\mathcal{H}_1$ and $\mathcal{H}_2$ form the ``banks'' of the river.  In the following definition, $Z$ is a trajectory $z(t)$ for $t \in (-\infty,0]$ satisfying Equation \eqref{eq:hamil}.

\begin{definition}
The \textbf{full River} $\mathcal{R}$ is defined as 

\begin{equation}
\mathcal{R}= \{Z | z (\tau) \in \mathcal{K}(\theta)\textrm{\ for\ some}\ \theta_1< \theta< \theta_2 \ \textrm{and for some}\ \tau <0 \ \textrm{and further} \  z(0) \in \mathcal{T}_{\Gamma}\}.
\notag
\end{equation}
\end{definition}
\noindent The full river $\mathcal{R}$ corresponding to the IVDP is depicted in Figure \ref{fig:conj_on_river} (a). Note that, we are parameterizing the trajectories so that each one crosses the exit torus $\mathcal{T}_{\Gamma}$ at $t = 0$. 

\subsection{River Trajectories as Minimizers}

We also define a sub-river $\tilde{\mathcal{R}}\subset \mathcal{R}$ that consists of trajectories with zero Maslov Index.

\begin{equation}
  \tilde{\mathcal{R}}:= \{Z\in \mathcal{R}|\ m(Z) = 0\}.  
\end{equation}

\noindent In Figure \ref{fig:conj_on_river}, both in a) and b), we plot the conjugate points (in red) for several paths in the river. 

\begin{figure}[tbp]
\begin{center}
$
\begin{array}{cc}
\includegraphics[scale=0.32]{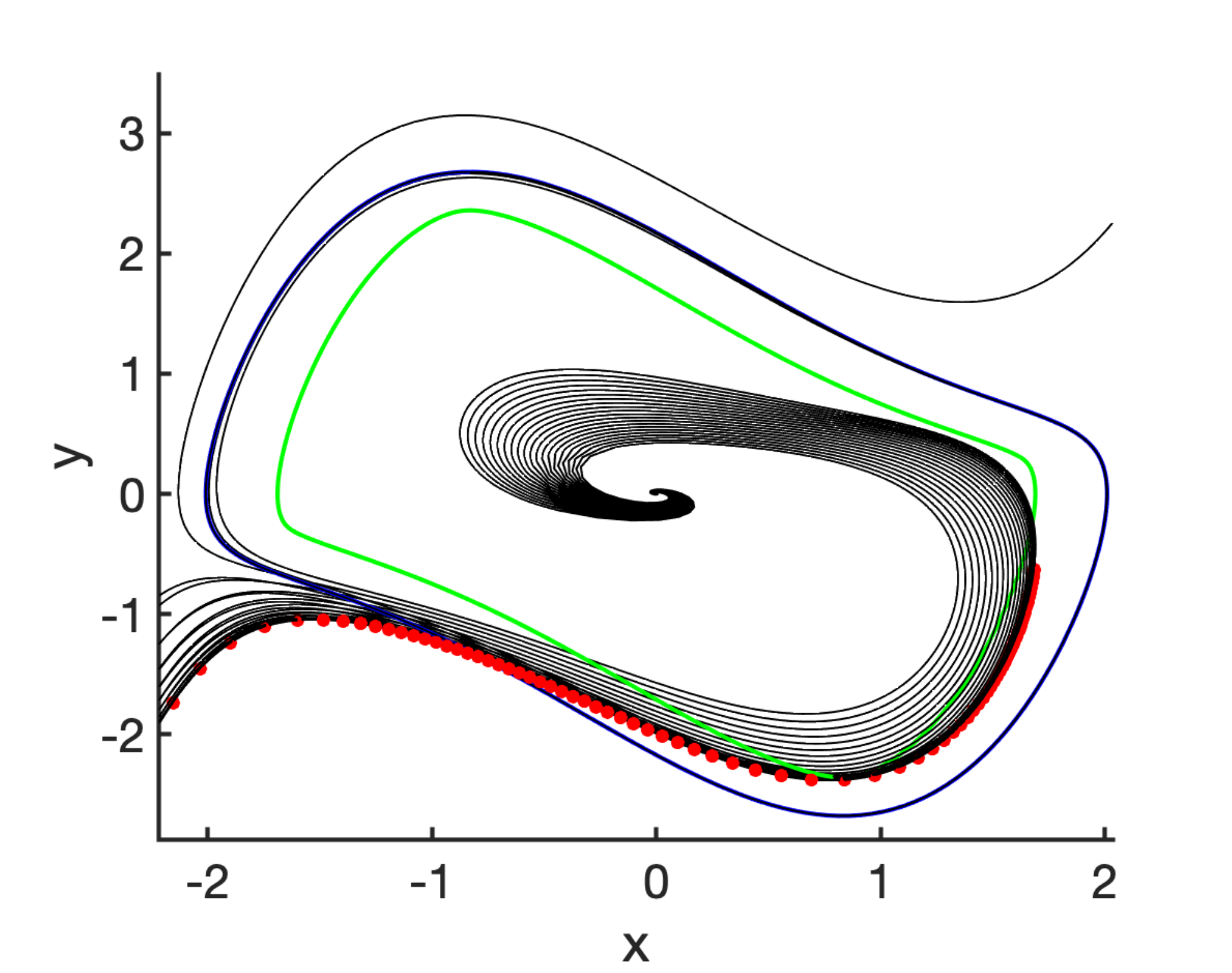} & \includegraphics[scale=0.32]{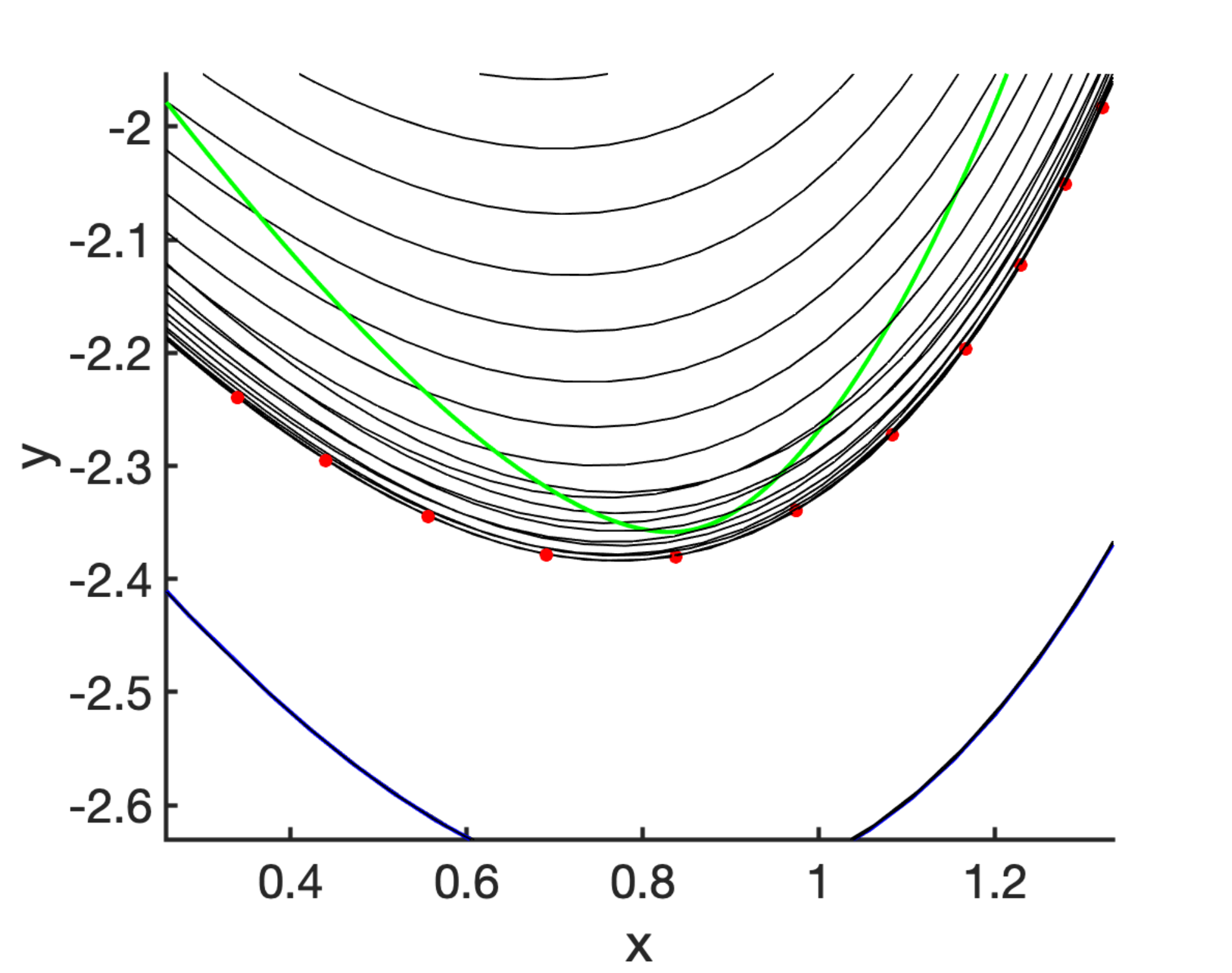} 

\end{array}
$
\end{center}
\caption{(a) Plot of the the conjugate points for 201 paths in the river along with every 10th path. (b) Zoomed in plot that shows the conjugate points occur after the paths have crossed the 0.32 border. 
}
\label{fig:conj_on_river}
\end{figure}

The main theorem states that the trajectories in $\tilde{\mathcal{R}}$ are minimizers of the Friedlin-Wentzell functional given their respective boundary value.

\begin{theorem}\label{thm:main}
Every trajectory in $\tilde{\mathcal{R}}$ is a local minimizer of the Friedlin-Wentzell action functional among the trajectories with the same boundary value $(x(0),y(0)) \in \Gamma$.
\end{theorem}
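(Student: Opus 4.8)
\emph{Proof proposal.}\ The plan is to read Theorem~\ref{thm:main} as an instance of the classical sufficiency criterion of the calculus of variations --- an extremal with no conjugate points, sitting inside a field of extremals, is a minimizer --- the point being that here the field is not built by hand but is $W^{u}(O)$ itself. Fix $Z=\{z(t):t\in(-\infty,0]\}\in\tilde{\mathcal R}$, write $\gamma(t)=(x(t),y(t))$ for its projection to the $(x,y)$-plane and $z^{*}=(x(0),y(0))\in\Gamma$, and recall that $Z$ solves the Hamiltonian system \eqref{eq:hamil}, lies in $\{H=0\}$, and tends to $O$ as $t\to-\infty$. First I would use $m(Z)=0$: by Definition~\ref{def:conj_pt} the projection $T_{z(t)}W^{u}(O)\to\R^{2}_{(x,y)}$ is onto for all $t\le0$, and since the limiting tangent plane $E^{u}=T_{O}W^{u}(O)$ is transverse to the Dirichlet subspace $\mathcal D$ (this is precisely the ``no conjugate point at $-\infty$'' condition, automatic from hyperbolicity and compatible with Section~\ref{sec:conjugatep}), the same holds in the limit $t\to-\infty$. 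Patching the implicit function theorem along $\gamma$ then presents $W^{u}(O)$, over a small open neighborhood $U$ of $\gamma$ in $\R^{2}$, as the graph of a smooth map $(p,q)=h(x,y)$ with $h(0,0)=0$. Because stable and unstable manifolds of hyperbolic fixed points of Hamiltonian flows are Lagrangian, the $1$-form $h_{1}\,dx+h_{2}\,dy$ is closed on $U$, hence $h=\nabla V$ for some $V\in C^{\infty}(U)$ normalized by $V(O)=0$; and $W^{u}(O)\subset\{H=0\}$ forces $V$ to solve the Hamilton--Jacobi equation $F\cdot\nabla V+\tfrac12|\nabla V|^{2}=0$ on $U$. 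In other words, $V$ is a genuine local quasipotential around $\gamma$.

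Next I would run the Weierstrass/Hilbert-invariant-integral comparison. The Lagrangian $\mathcal L(z,v)=\tfrac12|v-F(z)|^{2}$ has Legendre dual $H(z,p)=F(z)\cdot p+\tfrac12|p|^{2}$, so $\mathcal L(z,v)\ge\nabla V(z)\cdot v-H(z,\nabla V(z))=\nabla V(z)\cdot v$ for every $v$ and every $(x,y)\in U$, with equality exactly when $v=F(z)+\nabla V(z)$, i.e.\ along the $W^{u}(O)$-trajectory through that point. Integrating along any absolutely continuous $\tilde\varphi:(-\infty,0]\to U$ with $\tilde\varphi(0)=z^{*}$ and $\tilde\varphi(t)\to O$ yields
\[
S(\tilde\varphi)=\int_{-\infty}^{0}\mathcal L(\tilde\varphi,\dot{\tilde\varphi})\,dt\ \ge\ \int_{-\infty}^{0}\nabla V(\tilde\varphi)\cdot\dot{\tilde\varphi}\,dt\ =\ V(z^{*})-\lim_{t\to-\infty}V(\tilde\varphi(t))\ =\ V(z^{*}),
\]
using continuity of $V$ at $O$ and $V(O)=0$; the same computation with equality along $Z$ (where $\nabla V(z)\cdot\dot z=\mathcal L(z,\dot z)$ by the Hamilton--Jacobi relation, and $\dot z-F=\nabla V(z)\to0$ exponentially, so $S(Z)<\infty$) gives $S(Z)=V(z^{*})$. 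Any competitor path $C^{0}$-close to $\gamma$ (as a parametrized curve, $t=0$ being its exit time) has image in $U$, so $S(\tilde\varphi)\ge V(z^{*})=S(Z)$; and since the excess $\mathcal L(z,w)-\nabla V(z)\cdot w=\tfrac12|w-F(z)-\nabla V(z)|^{2}\ge0$ for \emph{all} $w$, $Z$ is in fact a strong local minimizer. Repeating this for each $Z\in\tilde{\mathcal R}$ proves the theorem.

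The crux is the first step: promoting ``no conjugate points along $\gamma$'' (a condition on tangent planes over a curve) to ``$W^{u}(O)$ is a single-valued graph over a two-dimensional neighborhood $U$ of $\gamma$.'' The obstruction is that $\gamma$ is a semi-infinite orbit accumulating at $O$ which, in the $(x,y)$-plane, typically spirals and self-intersects; the fix is to carry out the field construction in $(t,x,y)$-space, so that only \emph{parametrized} competitors near $Z$ need to lie in the field, together with a careful treatment of the limit at $O$ (the transversality $E^{u}\pitchfork\mathcal D$ plays the role of ``no conjugate point at $t=-\infty$'' and guarantees that $U$ and $V$ extend down to the fixed point). Everything downstream --- closedness of $h_{1}\,dx+h_{2}\,dy$, the Hamilton--Jacobi equation, the sign of the excess function --- is routine, because $\mathcal L$ is quadratic and strictly convex in the velocity, making the excess a perfect square and the inequality above global rather than merely infinitesimal.
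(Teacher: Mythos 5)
The gap is exactly where you flag it, and it is fatal as written: the step from ``no conjugate points along $Z$'' (fiberwise full rank of the projection of $T_{z(t)}W^u(O)$ to the $(x,y)$-plane, Definition \ref{def:conj_pt}) to ``$W^u(O)$ is a single-valued exact Lagrangian graph $(p,q)=\nabla V(x,y)$ over a planar neighborhood $U$ of the whole projected curve $\gamma$.'' Full rank along the trajectory only makes the projection a local diffeomorphism near each point of $Z$; it does not make it injective on a neighborhood of $Z$ in $W^u(O)$ over a neighborhood of $\gamma$ in the plane. For the trajectories the theorem is actually about, injectivity genuinely fails: members of $\tilde{\mathcal{R}}$ close to the heteroclinic $\mathcal{H}_1$ wind around near $\Gamma$ several times before exiting, and every trajectory spirals around $O$ as $t\to-\infty$, so the planar projection repeatedly covers the same regions with different momenta $(p,q)$ on different windings. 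Over such points there is no single-valued $h$, hence no time-independent $V$ solving the Hamilton--Jacobi equation, and the Hilbert invariant-integral comparison you display cannot be run. Your proposed repair --- building the field in $(t,x,y)$-space --- is the right classical move but is only asserted, not executed: with a time-dependent field one needs $V_t+H(z,\nabla V)=0$, the endpoint bookkeeping changes, and the construction must be made uniform on the non-compact interval $(-\infty,0]$ where the tube degenerates onto the fixed point; that is precisely the nontrivial content of a field-theoretic proof here. (A smaller issue: transversality of $E^u=T_OW^u(O)$ to the Dirichlet plane $\mathcal{D}$ is true, but it is a property of the Freidlin--Wentzell Hamiltonian at a hyperbolic attractor, not an automatic consequence of hyperbolicity, and deserves a short argument.)

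For contrast, the paper's proof avoids any field construction: it works with the second variation along $Z$, uses the absence of conjugate points to form a symmetric Riccati solution $W=-V_2V_1^{-1}$ from a frame of the unstable solution space of the linearization of Equation \eqref{eq:hamil}, completes the square to show $\delta^2 I\ge 0$, and then gets a definite bound $\delta^2 I\ge\mu\|h\|^2$ by perturbing the linearized system to $D_\mu$ and showing, via compactification of $(-\infty,0]$ and persistence of the relevant fixed point on the Lagrangian Grassmannian, that no conjugate points appear for small $\mu>0$. If you want to keep your route, the exact-graph/quasipotential argument is legitimate only where the projection is injective (for instance near $O$, recovering the local quasipotential), and along the rest of the orbit you must carry out genuine time-dependent field theory on the semi-infinite domain; as it stands, the central step of your proof is missing for exactly the trajectories the theorem concerns.
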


\begin{proof}
Let $z= (x,y,p,q)$ be an element of $\tilde{\mathcal{R}}$. Let the columns of $V$ form a basis for the solution space of $V' = A^* V$ that belongs to the unstable manifold at the origin, where $A^*$ is the linearization of Equation \eqref{eq:hamil} about $z$. We can write $A^*$ as
\eq{
A^* = \mat{B&Id\\ -A&-B^T}
}{\notag}
where 
\eq{
A = \mat{pf_{xx}+qg_{xx}&pf_{xy}+qg_{xy}\\ pf_{yx}+qg_{yx} & pf_{yy}+qg_{yy}},\quad B = \mat{f_x&f_y\\ g_x&g_y}. 
}{\notag}
The second variation of the Friedlin-Wentzell functional is given by
\eq{
\delta^2 I[h^T,h] &= \int_{-\infty}^0 h^T(B^TB-A)h-h^TB^T\dot h - \dot h^TBh+\dot h^Th dt,
}{\notag}
where $\|h\| = 1$ is the direction of perturbation and $^.:=\frac{d}{dt}$. We note that $h(0)=0$ and that $h$ and its first derivative decays to zero at exponential rate as $x\to -\infty$, and that $A$ and $B$ are uniformly bounded since they asymptotically decay to constants states. Thus the integrals given in the definition of $I[h]$ and $\delta^2 I[h^T,h]$ converge. 
Following a standard calculation (for example see \cite{liberzon_calculus_2011}), define $V_1$ and $V_2$ so that $V = (V_1^T,V_2^T)^T$. Then
\eq{
\dot V_1 = BV_1+V_2,\quad
\dot V_2 = -AV_1-B^TV_2.
}{\notag}
Since there are no conjugate points associated with $z$, the matrix $V$ is full rank throughout its domain, and thus $V_1$ is invertible. Define $W = -V_2V_1^{-1}.$
Then 
\eq{
\dot W&= -\dot V_2V_1^{-1}+V_2V_1^{-1}\dot V_1 V_1^{-1}\\
&= A- B^TW-W(BV_1+V_2)V_1^{-1}\\
&= A-B^TW-WB+W^2.
}{\notag}
Note that if the initial data is symmetric, then $W$ is symmetric. To see this, if $W$ is a solution that satisfies $W(t_0) = W_0$ where $W_0^T = W_0$ , then $W^T(t)$ is also a solution of the \correction{comment 20}{Riccati} equation that has the same initial conditions. By uniqueness of solutions, $W(t) = W^T(t)$. 

We will shortly use the following fact. Note that for any $C^1$ matrix valued function $W:(-\infty,0]\to \R^{n\times n}$ that
\eq{
0&= hWh^T|_{-\infty}^{0}\\
&= \int_{-\infty}^0 \frac{d}{dx}(hWh^T)dx\\
&= \int_{-\infty}^0 h'Wh^T+hW'h^T + hW(h')^T.
}{\notag}
Now 
\eq{
\delta^2 I &= \int_{-\infty}^0 h^T(B^TB-A)h-h^TB^Th-\dot h^TBh+\dot h^Th\\
&= \int_{-\infty}^0 h^T(\dot W+B^TB-A)h+h^T(W-B^T)h+\dot h^T(W-B)h+\dot h^T\dot h\\
&= \int_{-\infty}^0 h^T(B^TB-B^TW-WB+W^2)h + h^T(W-B^T)h+\dot h^T(W-B)h+\dot h^T\dot h\\
&= \int_{-\infty}^0 ([B-W]h-\dot h)^T([B-W]h-\dot h).
}{\notag}
Thus, the second variation is non-negative if there are no conjugate points. We now use a perturbation argument to show that the second variation is actually bounded from below. 
Suppose that for some \correction{comment 17}{$\mu > 0$} there are no conjugate points for the system $V'=  D_{\mu}V$, where
\eq{
D_{\mu} := \mat{ B & (1-\mu)Id\\ -A-\mu Id&-B^T}.
}{\notag}
Then we have
$\delta^2 I[h^T,h] \geq \mu \int h^Th+\dot h^T\dot h\geq \mu \|h\|^2$. Thus, to show that $z$ is a local minimum of $I$, we only need show that there exists $\mu > 0$ such that $V'=D_{\mu}V$ has no conjugate points. 

Now as $t \to -\infty$:
\eq{ D_{\mu} \to D_{\mu}^{\infty}:= \mat{ B_{\infty} & (1-\mu)Id\\ -A_{\infty}-\mu Id&-B_{\infty}^T}
}{\notag}
where $A_{\infty}$ and $B_{\infty}$ are $A$ and $B$ evaluated at $(0,0,0,0)$, respectively.
 We rewrite $V'=D_{\mu}V$ as an autonomous system:
\begin{equation}\label{eq:aut1}
\begin{split}
V'=&D_{\mu}(\tau)V\\
\tau'=&1,
\end{split}
\end{equation}
$(V, \tau) \in \R^4 \times (-\infty,0]$. 
Equation \eqref{eq:aut1} induces a flow on $\Lambda(2) \times (-\infty,0]$, where $\Lambda(2)$ is the space of Lagrangian 2 planes in $\R^4$, with the associated equation:
\begin{equation}\label{eq:aut2}
\begin{split}
v'=&d_{\mu}(v,\tau)\\
\tau'=&1,
\end{split}
\end{equation}
for some function $d_{\mu}(v,\tau)$, where $v \in \Lambda(2)$. Note here that the perturbed system is also a linear Hamiltonian system. 

Both Equations \eqref{eq:aut1} and \eqref{eq:aut2} can be compactified, see \cite{Wieczorek_2021}, by a map $\sigma:(-\infty,0] \to [-1,0]$. Setting $s=\sigma(\tau)$, Equation \eqref{eq:aut1} becomes
\begin{equation}\label{eq:aut3}
\begin{split}
V'=&D_{\mu}(h(s))V\\
s'=&g(s),\\
\end{split}
\end{equation}
where $h(s):= \sigma^{-1}(s)$ and $g(s):= \sigma'(h(s))$, and Equation \eqref{eq:aut2} becomes
\begin{equation}\label{eq:aut4}
\begin{split}
v'=&d_{\mu}(v,h(s))\\
s'=&g(s),
\end{split}
\end{equation}
with $g(-1)=0$.

Note that $s=-1$ corresponds to $\tau=-\infty$ and so 

\begin{equation*}
\begin{split}
D_{\mu}(h(s=-1)) = &D^{\infty}_{\mu},\\
d_{\mu}(v,h(s=-1)) = &d^{\infty}_{\mu}(v).
\end{split}
\end{equation*}

When $\mu=0$, we have $D_{0}^{\infty}$ has $2$ complex conjugate eigenvalues with negative real part and $2$ complex conjugate eigenvalues with positive real part.
\noindent Now, $s=-1$ is invariant and the 2D unstable subspace $V^u_0$ of $V'=D^{\infty}_0V$ becomes a fixed point $v ^u_0$ of 
\begin{equation}\label{eq:pert2}
v'=d^{\infty}_0v.
\end{equation}
Moreover it perturbs to a fixed point $v ^u_{\mu}$, if $\mu >0$ is sufficiently small, of
$
v'=d^{\infty}_{\mu}v,
$
since it is attracting in Equation \eqref{eq:pert2}.

Next, consider equation \eqref{eq:aut4} on $\Lambda(2) \times [-1,0]$ when $\mu=0$, then $v^u_0$ is a fixed point with 3D stable manifold, which lies inside $\{s=-1\}$, and a 1D unstable manifold. The same will hold for sufficiently small $\mu >0$. The 1D unstable manifold is the object we want. Moreover, by construction, it varies smoothly in $\mu$.
Thus if $V' = D_{0}V$, $t \in (-\infty,0]$ produces no conjugate points, then $V' = D_{\mu}V$ also does not, as long as $\mu >0$ is small enough.  \end{proof} \correction{comment 7}{}

It needs to be emphasized here that we only expect these trajectories to be {\em local} minimizers. There will be infinitely many trajectories in $W^u(O)$ that cross $\Gamma$ at a fixed $(x,y)\in \Gamma$ and an infinite sequence of them will consist of local minimizers. Moroever, they will have decreasing action value and the limit will be the action value of $\mathcal{H}_1$. 

This is the cycling phenomenon known to occur when there is a periodic boundary, see \cite{day_exit_1996}. the MPEP is the heteroclinic and, as it is then the mode of the escaping trajectories, it will enforce cycling of trajectories that escape. Our point is that this will only occur in the limit of vanishing noise. Moreover, our contention is that the trajectories in $\tilde{\mathcal{R}}$ play a key guiding role for the (noisy) trajectories that escape in small but not vanishing noise. 

The trajectories in $\tilde{\mathcal{R}}$ can be viewed as most probable paths of a constrained problem. If we consider the problem of finding the most probable path between $O$ and a particular point $(x,y)$ on $\Gamma$, then the trajectories on $\tilde{\mathcal{R}}$ will appear. In probabilistic terms, such a trajectory is a candidate for the most probable path when conditioned on exiting $\Gamma$ at exactly that point. 

If we further restrict the amount of cycling in the condition, then there will be a path in $\tilde{\mathcal{R}}$ that will be a global minimizer. To give an exact accounting of such a cycling condition is not straightforward and will not be taken up here.  Nevertheless, this idea should give some credence to our view that these trajectories play a key role in understanding escape through the periodic orbit.

\subsection{Pivot Point}\label{sec:pivot}
The river $\mathcal{R}$ and the 0-Maslov Index sub-river $\tilde{\mathcal{R}}$ can be characterized in terms of the points in the interval of angles: $\left [ \theta _1, \theta _2 \right ]$. We introduce a transition map 
\begin{equation}\label{eq:transition}
G: \mathcal{K}\left(\theta _1,\theta _2 \right )\to \mathcal{T}_{\Gamma},   
\end{equation}
which takes each point on that part of $\mathcal{K}\subset W^u(O)$ to the point where the trajectory through that point first crosses the periodic orbit, i.e., lies in the torus $\mathcal{T}_{\Gamma}$ (which recall is $H=0$ with $(x,y)\in \Gamma$). Since $\theta_1$ and $\theta_2$ are not included, we know that every trajectory does indeed cross $\Gamma$. 

We will refer to the image of $G$ as the {\em mouth of the river}. we would like to find a subset $J$ of $\mathcal{K}\left(\theta _1,\theta _2 \right )$ so that $G(J)=\tilde{\mathcal{R}}$ but also have this be an interval (in the angle). In general, this cannot be guaranteed and so we take a subset of  $\mathcal{K}\left(\theta _1,\theta _2 \right )$ by the following procedure.

By Lemma \ref{lemma:exit}, we know that if $\theta$ is sufficiently close to $\theta _1$ then $G(\mathcal{K}(\theta))\in \tilde{\mathcal{R}}$. Let $$\hat{\theta} = \inf \{\theta | G(\mathcal{K}(\theta))\in \tilde{\mathcal{R}}\}.$$ Then the set $\mathcal{K}(\theta_1, \hat{\theta})$ is non-empty and $G\left( \mathcal{K}(\theta _1,\hat{\theta}) \right)\subset \tilde{\mathcal{R}}$. The end-point $G\left( \mathcal{K}(\hat{\theta })\right)$ will not lie in $\tilde{\mathcal{R}}$. In fact, we have the following lemma.

\begin{lemma} \label{lemma:pivot}The trajectory emanating from $\mathcal{K}(\hat{\theta})$ will have Maslov Index equalling 1 and the conjugate point will occur as the trajectory crosses the periodic orbit, i.e., when it is in $\mathcal{T}_{\Gamma }$. 
\end{lemma}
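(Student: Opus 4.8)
The plan is to exploit the continuity of the conjugate-point structure as $\theta$ varies through $(\theta_1,\theta_2)$, together with the fact that $\hat\theta$ is the infimum of the $\theta$-values at which a conjugate point first appears before the trajectory reaches $\mathcal{T}_\Gamma$. First I would set up notation: for each $\theta\in(\theta_1,\theta_2)$ let $Z_\theta$ be the river trajectory through $\mathcal{K}(\theta)$, parametrized so that $z_\theta(0)\in\mathcal{T}_\Gamma$, and let $\tau\mapsto \rho_{12}(\tau;\theta)$ be the first Pl\"ucker coordinate of $T_{z_\theta(\tau)}W^u(O)$, solving Equation \eqref{eq:conj2} with the unstable-subspace initial condition near $O$. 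By Lemma \ref{lemma:exit}, for $\theta$ slightly above $\theta_1$ there are no conjugate points on $(-\infty,0]$, i.e.\ $\rho_{12}(\cdot;\theta)$ has no zero on that interval, so $Z_\theta\in\tilde{\mathcal R}$. The definition of $\hat\theta$ then says precisely that on $(\theta_1,\hat\theta)$ the function $\rho_{12}(\cdot;\theta)$ is nonvanishing on $(-\infty,0]$, while for $\theta=\hat\theta$ this fails (otherwise, by openness of the no-conjugate-point condition — continuity of the tangent space and of $\rho_{12}$ jointly in $(\tau,\theta)$ up to the compact exit time — a whole neighborhood of $\hat\theta$ would also be conjugate-point-free, contradicting the infimum).

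Next I would identify where the conjugate point of $Z_{\hat\theta}$ must be located. Since each $Z_\theta$ for $\theta\in(\theta_1,\hat\theta)$ has $\rho_{12}(\tau;\theta)\neq 0$ for all $\tau\le 0$, and $\rho_{12}$ is continuous in $(\tau,\theta)$, the limiting trajectory $Z_{\hat\theta}$ can only acquire a zero of $\rho_{12}$ at the boundary $\tau=0$ — any interior zero at some $\tau_0<0$ would, by the implicit-function/transversality behavior of simple zeros (or more carefully a degree argument), persist to nearby $\theta<\hat\theta$, again contradicting the definition of $\hat\theta$. Hence the unique conjugate point of $Z_{\hat\theta}$ on $(-\infty,0]$ occurs exactly at $\tau=0$, i.e.\ as the trajectory lies in $\mathcal{T}_\Gamma$. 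One subtlety is ruling out a zero escaping to $\tau=-\infty$: this is handled because near $O$ the tangent space converges to the unstable subspace of the linearization, whose Pl\"ucker coordinate $\rho_{12}$ is bounded away from zero (the unstable subspace projects isomorphically onto the $(x,y)$-plane), so no conjugate points accumulate at $-\infty$; this is the same fact used in Definition \ref{def:conj_pt} and the Maslov-index discussion.

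Finally I would count the Maslov index. By the definition of $m(Z)$ as the number of conjugate points counted with multiplicity, and since we have just shown $Z_{\hat\theta}$ has exactly one conjugate point on $(-\infty,0]$, located at the crossing of $\Gamma$, it remains only to confirm the multiplicity is $1$, i.e.\ that the intersection of $T_{z_{\hat\theta}(0)}W^u(O)$ with the Dirichlet train $\mathcal S(\mathcal D)$ is simple/one-dimensional. This follows from transversality: assumption (A5) gives that trajectories just above $\hat\theta$ still cross $\Gamma$ while by construction those with a conjugate point before $\mathcal{T}_\Gamma$ exist for $\theta$ just above $\hat\theta$, so the curve $\theta\mapsto T_{z_\theta(0)}W^u(O)$ crosses $\mathcal S(\mathcal D)$ transversally at $\theta=\hat\theta$ — equivalently $\partial_\theta\rho_{12}(0;\hat\theta)\neq 0$ — which forces the kernel of the projection to be one-dimensional and the contribution to the index to be $m(Z_{\hat\theta})=1$.

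I expect the main obstacle to be the rigorous version of the ``zeros cannot appear in the interior without persisting'' step: making precise that the first zero of $\rho_{12}(\cdot;\theta)$ depends continuously on $\theta$ and cannot jump discontinuously into the interior of $(-\infty,0]$ requires care about uniformity as $\tau\to-\infty$ and about the possibility of a double (tangential) zero forming at $\hat\theta$. I would handle the uniformity by working on the compactified domain as in the proof of Theorem \ref{thm:main} (the map $\sigma:(-\infty,0]\to[-1,0]$, with the tangent-space curve extending continuously to $s=-1$ where it equals the unstable subspace $v^u$), reducing everything to a continuous family of curves in $\Lambda(2)$ on the compact interval $[-1,0]$, where intersection numbers with $\mathcal S(\mathcal D)$ behave well; and I would treat a possible tangential zero at $\hat\theta$ by noting it would still be a conjugate point occurring strictly before $\tau=0$ for a sequence $\theta\downarrow\hat\theta$... actually more cleanly, by observing that a tangential interior zero has even local intersection multiplicity and hence would be present (as two zeros, or as a persistent zero) for $\theta$ on both sides, again contradicting minimality of $\hat\theta$.
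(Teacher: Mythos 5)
The paper states Lemma \ref{lemma:pivot} without proof, so you are judged on the merits, and your skeleton (continuity in $\theta$ on the compactified time interval, persistence of simple zeros of $\rho_{12}$, and excluding conjugate points escaping to $\tau=-\infty$ via the nondegenerate projection of the unstable subspace at $O$) is the natural route. But there is one genuine gap: your dismissal of an interior tangential zero is wrong. An even-order zero of $\rho_{12}(\cdot;\theta)$ does \emph{not} have to be ``present on both sides'' of $\hat\theta$: in a fold scenario the zero set near $(\tau_0,\hat\theta)$ looks like $\theta=\hat\theta+c(\tau-\tau_0)^2$ with $c>0$, so there are no zeros for $\theta<\hat\theta$, a double zero at $\theta=\hat\theta$ at the interior time $\tau_0<0$, and two zeros for $\theta>\hat\theta$. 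This is perfectly consistent with $\hat\theta$ being the first angle at which a conjugate point appears, yet it would place the conjugate point of $Z_{\hat\theta}$ strictly before the crossing of $\Gamma$, contradicting the conclusion you are trying to prove. So the parity argument does not close the case, and without closing it the location statement of the lemma is not established.

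The missing ingredient is the monotonicity coming from the Hamiltonian/Legendre structure of Equation \eqref{eq:hamil}: writing a tangent solution as $(u,w)$ with $\dot u=Bu+w$, $\dot w=-Au-B^{T}w$ (the notation of the proof of Theorem \ref{thm:main}), any nonzero solution in the tangent plane with $u(\tau_0)=0$ has $\dot u(\tau_0)=w(\tau_0)\neq 0$, i.e., every intersection of the curve of tangent planes with the train of $\mathcal{D}$ is a regular crossing (the crossing form is $-|w|^2$, definite). Hence tangential touchings are impossible, all zeros of $\rho_{12}$ are simple, and conjugate points can only be created or destroyed through the endpoint $\tau=0$ (not in the interior, and not at $\tau=-\infty$, as you correctly argue); your persistence argument then goes through and forces the conjugate point of $Z_{\hat\theta}$ to sit exactly on $\mathcal{T}_{\Gamma}$. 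For the multiplicity, your transversality-in-$\theta$ claim is a non sequitur (existence of conjugate points for $\theta$ just above $\hat\theta$ does not give transversality); the clean argument is that multiplicity $2$ would force the tangent plane to equal $\mathcal{D}$, hence $(\dot x,\dot y)=(f+p,g+q)=0$ at the exit point, which on $H=0$ over $\Gamma$ is impossible since by Equation \eqref{eq:Ham_torus} that would put $(p,q)$ at the center of the circle rather than on it. With multiplicity $1$ at $\tau=0$ and no earlier conjugate points, $m(Z_{\hat\theta})=1$ as claimed.
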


The set $Q=G\left( \mathcal{K}(\theta _1,\hat{\theta}) \right)$ plays a key role. It has the following properties:
\begin{enumerate} 
    \item $Q$ is an infinite spiral in $\mathcal{T}_{\Gamma}$,
    \item Its projection onto the $(x,y)$-space is all of $\Gamma$.
    \item It is pinned at one end by the pivot point. 
\end{enumerate}
The other end of this curve is the heteroclinic, but that is not seen in $\mathcal{T}_{\Gamma}$ as it never reaches it. By Theorem \ref{thm:main}, every point in $Q$ is a (local) minimizer of the Freidlin-Wentzell action functional with fixed boundary condition, except the pivot point. 

In the following sections, we shall see that the set $Q$ plays a key role in determining the escape hatch. But it is still too large, because of Property 2 above. 

From the Monte-Carlo simulations, we see that the escape hatch is near the pivot point, and definitely does not extend around all of $\Gamma$. 
        
\section{A Perturbed Action
\label{sec:onsager}}
In order to understand why the noisy escape trajectories do not veer too far from the pivot point when crossing $\Gamma$, we need to calculate the energy required by a path to escape to higher order. This involves the Onsager-Machlup (OM) functional \correction{comment 19}{\cite{OM_Durr_Bach,Bach, PhysRev.91.1505}}, which becomes relevant when the noise is not necessarily small.

Our viewpoint is to use the OM action as a selection mechanism among the trajectories that we find as (local) minimizers of the FW functional, in particular, the trajectories in $\tilde{\mathcal{R}}$.

For IVDP, the set $Q$ is exactly the part of the mouth of the river corresponding to  $\tilde{\mathcal{R}}$. In the following, we will therefore not distinguish between these two objects and take $Q=\tilde{\mathcal{R}}\cap \Gamma$ 

\subsection{The Onsager-Machlup Functional}
The OM functional for a path $z=z(t)$ on an interval $[a,b]$, it is given by 
\begin{equation}\label{eq:OM}
    I_{\varepsilon}(z)= \int_{a}^b \frac{(\dot{z}-F(z))^2}{2} + \varepsilon \left(\grad \cdot F(z) \right) dt,
\end{equation}
where $\varepsilon $ is the noise coefficient as usual. 
Applying this to IVDP, we calculate
\[\grad \cdot F(z) = \grad \cdot F(x,y) = 2 \eta(x^2-1).\]

\noindent 
For IVDP, it holds that $\int_{\Gamma} \grad \cdot F(z) dt >0$. Hence the OM functional $I_{\varepsilon}$, with $\eps > 0$, penalizes trajectories that cycle around the periodic orbit. Since the heteroclinic orbits wind around a neighborhood of $\Gamma$ infinitely many times, $I_{\varepsilon}$ evaluated near a heteroclinic orbit will tend to infinity. Thus, when the OM perturbation is added to the Friedlin-Wentzell (FW) functional, the heteroclinic connections cease to be global minimizers. 

\subsection{A Selection Mechanism}
Suppose we have a family of (local) minimizers of the FW functional on an interval $[0,T]$
$$\mathcal{F}= \{ z(t)|z(0) \in A, z(T) \in B \}, $$ where $A$ and $B$ are sets in $\R^2$. 
We can attempt to find the global minimizer of the FW functional $S_T(z)$ over the trajectories in $\mathcal{F}$. But it may be that there is no global minimizer in $\mathcal{F}$.

Exactly this situation occurs if $A=C$, a small circle around the fixed point $O$ and $B=\Gamma$. In this case, if we look for minimizers of $S_T(z)$ over $\mathcal{F}$ with fixed $z(T)\in \mathcal{T}_{\Gamma}$, we will obtain exactly the set $\Tilde{\mathcal{R}}$, which has found these minimizers as trajectories of Equation \eqref{eq:hamil} and weeded out those with non-zero Maslov Index. The (global) minimizer over $\mathcal{F}$ can now be found by sorting through the action values and finding the member of $\mathcal{F}$ with least action. But such a trajectory will not exist since the action decreases as the paths tend to the heteroclinic, which is not in $\mathcal{F}$. This is back to the same issue that lies behind the cycling phenomenon, namely that there is no global minimizer of the FW functional which crosses $\Gamma$. Note that, in this example, the circle $C$ is used as a proxy for $z(t)\to O$ as $t\to - \infty$, and that if $C$ is small enough, i.e., close enough to $O$, then the difference in action value of a member of $\mathcal{F}$ from taking one circle $C$ over another is negligible. 

The idea then is to use the OM functional as a perturbation of the FW functional to select which of these paths is the {\em Most Probable Escape Path} for small but non-vanishing noise.

\subsection{Evaluating OM along FW minimizers}
Since the path $z=z(t)$ is independent of $\eps$, the OM functional $I_\eps (z)$ is linear in $\eps$, and
\begin{equation}\label{eq:OM_pert}
    \frac{\partial I_{\varepsilon}}{\partial \varepsilon} 
    =  \int_{0}^{T} \grad \cdot F(z) dt.
\end{equation}
The right hand side of Equation \eqref{eq:OM_pert} can \correction{comment 8}{be} evaluated on a trajectory in $\mathcal{R}$. Since $\tilde{\mathcal{R}}\subset \mathcal{R}$, the trajectories of interest are included. The result for IVDP is shown in Figure \ref{fig135}. 

\begin{figure}[htbp]
 \begin{center}
$
\begin{array}{lcr}
\includegraphics[scale=0.5]{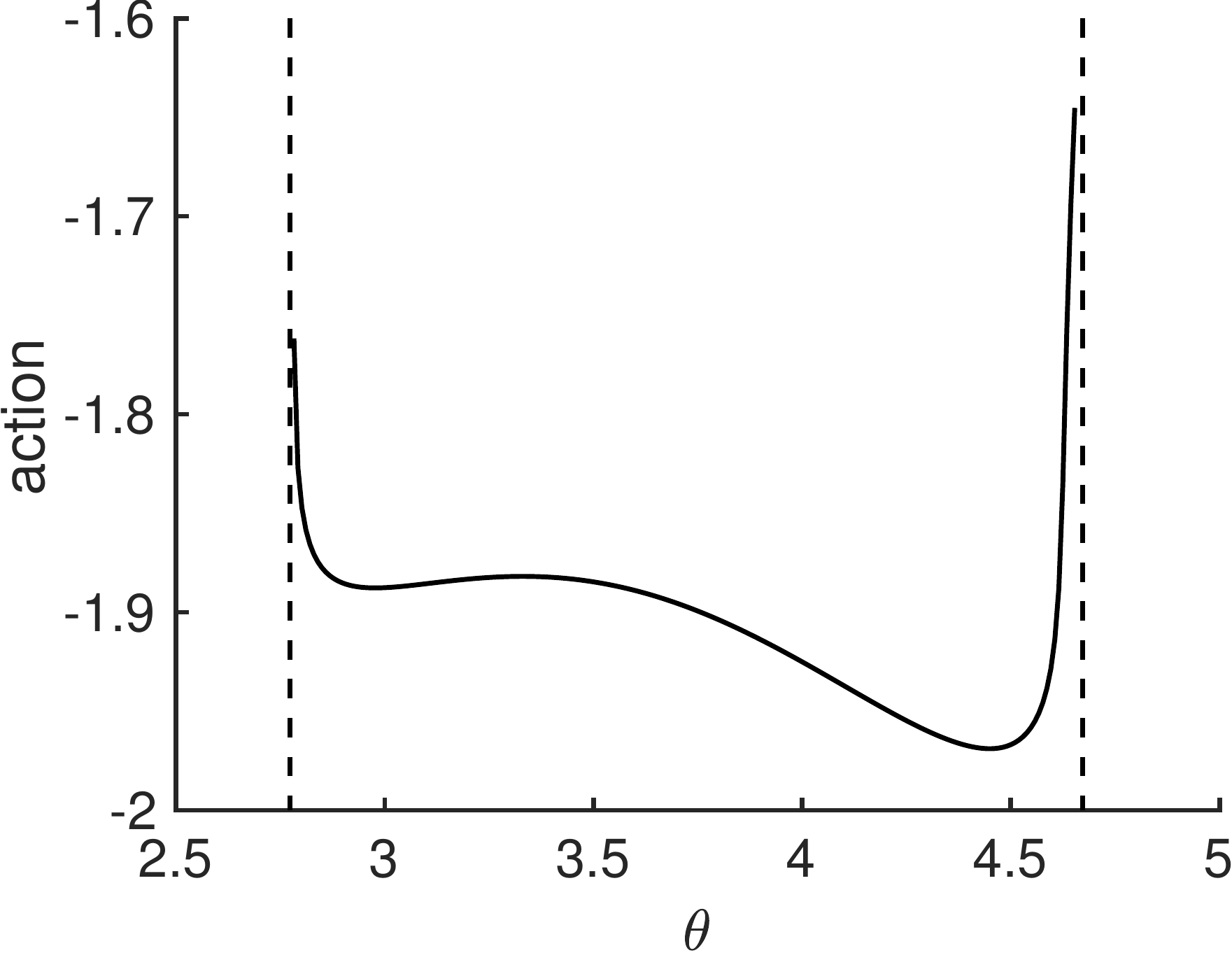}
\end{array}
$
\end{center}
\caption{Plot of the Onsager-Machlup functional  along trajectories in the full river $\mathcal{R}$.  The left and right dotted lines mark the value of $\theta$ corresponding to the unstable ($\theta_2$) and stable ($\theta _1$) heteroclinic connections respectively. 
 }
\label{fig135}
\end{figure}

One caveat is that the integral in the OM functional would not converge if computed along FW trajectories lying in $W^u(O)$ considered on the half-line $(-\infty, 0]$. This is because $\grad \cdot F(z)|_{z=0} \ne 0$. Integrating from $t=0$ and initiating on a small circle is designed to circumvent this challenge. In other words, we are computing the action between the boundary of a small neighborhood of the origin and the periodic orbit. The extra action one obtains by shrinking the neighborhood around the origin varies less and less among the orbits as the neighborhood gets small since $\grad \cdot F(z)$ converges to $-2\eta$ at a uniform exponential rate. Hence, in practice we obtain the most probable escape path predicted by the OM functional to within numerical precision by truncating orbits onto a finite domain.

\subsection{A Most Probable Escape Path according to OM} The action plot for IVDP is shown in Figure \ref{fig135}.  The minimum action occurs at $\theta_{\textrm{min}} \approx 4.44$. This is the object that we claim can be taken as an MPEP for this level of noise. It will be noise dependent and, were the noise to be decreased toward $0$, it would move toward $\theta _1$, i.e., the $\theta$ value at the (stable) heteroclinic.

In Figure  \ref{fig:MPEPC2}, we compare the projection onto the $(x,y)$-plane of the OM orbit corresponding to $\theta = 4.44$ with the most probable exit locations as given by our Monte Carlo simulations. The correspondence of the OM-selected path and the peak of the exit distribution is striking. Details about the Monte Carlo simulations and these comparisons are given in the next section (Section \ref{sec:mcsim}).

\section{Monte-Carlo Simulations
\label{sec:mcsim}}
The analysis we have carried out is aimed at finding a most probable path of escape for noisy trajectories through a periodic orbit that forms the boundary of the basin of attraction of the attracting fixed point. The work has been predicated on the notion that for small, but non-vanishing, noisy trajectories that escape will not exhibit cycling but rather find an ``escape hatch" at a specific part of the periodic orbit. Moreover, they will choose to leave the basin of attraction without overly cycling, at least not near the boundary (periodic orbit).

\begin{figure}[ht]
\begin{center}
$
\begin{array}{cc}
\includegraphics[scale=0.22]{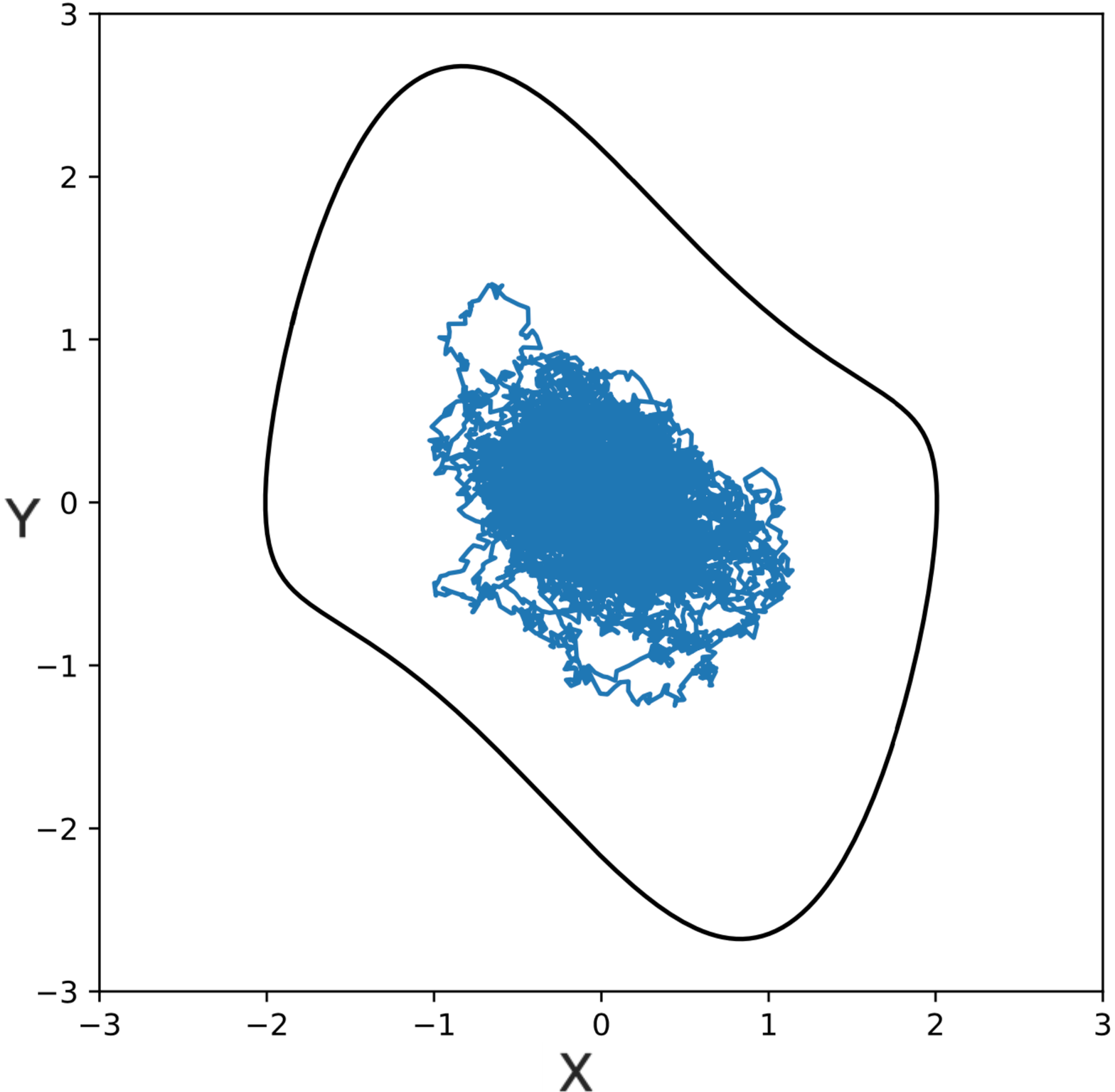} & \includegraphics[scale=0.22]{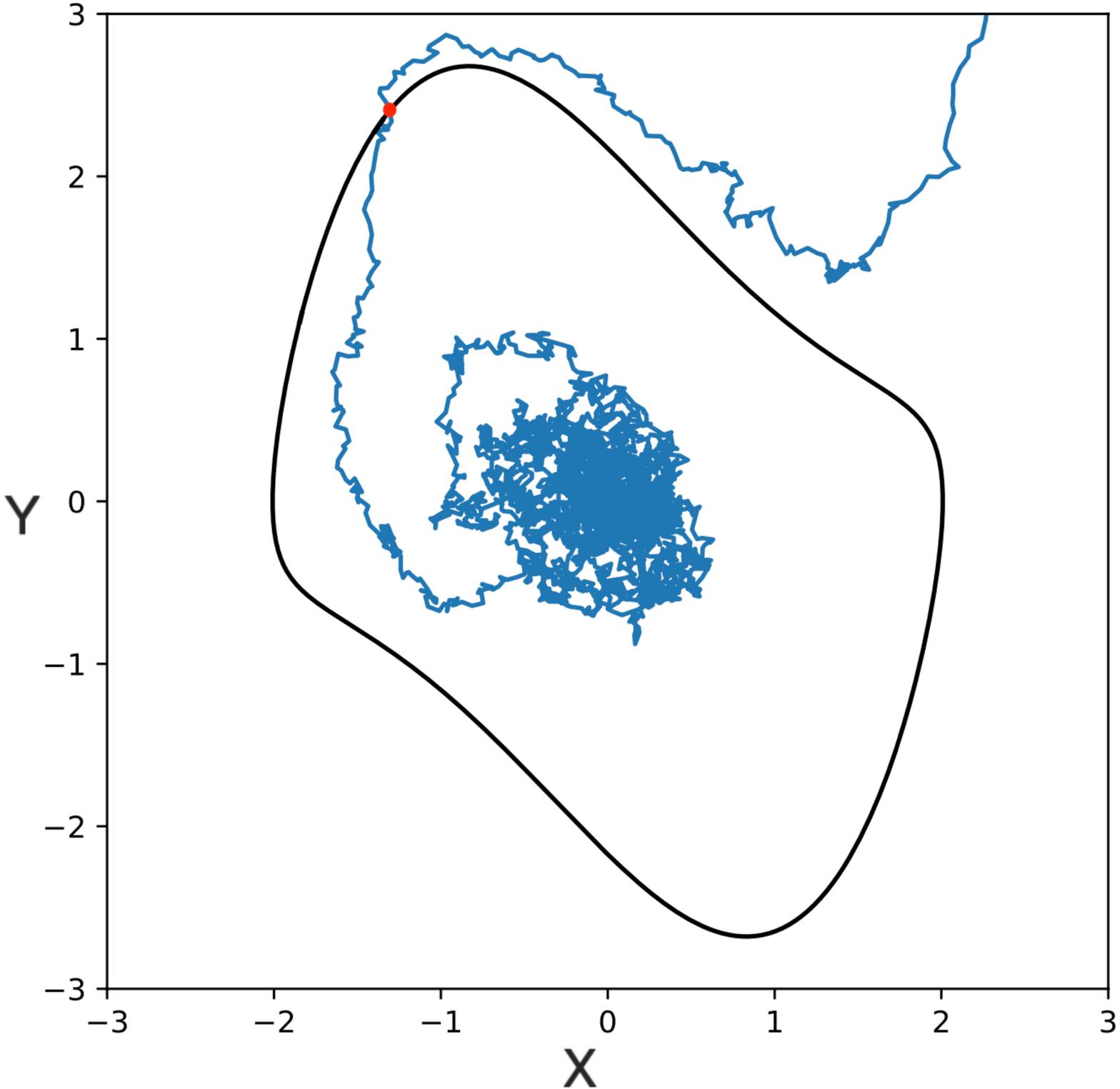}\\ (a) & (b)
\end{array}
$
\end{center}
\caption{Two sample paths of Equation \eqref{eq:first_order_ivdp_sde} (blue) on the interval $[0,200]$ with $dt=.005$, $\eta=.5$, $\sigma_1=\sigma_2=.32$, overlaid with $\Gamma$ (black). (a) The sample path does not escape. (b) The sample path escapes and the red point denotes its exit location, $(x_i,y_i)$.}
\label{fig:tipnotip}
\end{figure}

In this section, we show this by carrying out Monte-Carlo simulations on IVDP with added noise. In the computations, we have set the noise level at $\sqrt{\epsilon}=.32$, which corresponds to noise strength $\eps$ of approximately $0.1$. It may be argued that
this is not all that small, it was derived by pushing the noise to the smallest level for which we could obtain convergence on the exit distributions within reasonable computing time. We note that we find qualitatively the same results using $\sqrt{\epsilon}=.3$ or $\sqrt{\epsilon}=.35$. 

\subsection{IVDP with noise}
The stochastic version of the first order IVDP system is given by 
\begin{equation}
\begin{aligned}
dx&=y \ dt+\sqrt{\varepsilon} dW_1, \\
dy&=(-x+2\eta y(x^2-1)) dt+\sqrt{\varepsilon} dW_2.
\end{aligned}   
\label{eq:first_order_ivdp_sde}
\end{equation}
We numerically approximate the solutions of Equation \eqref{eq:first_order_ivdp_sde} using the Euler-Maruyama method to create a discretized Markov process \cite{higham_algorithmic_2001} over the time interval $[0,200].$ To apply the Euler-Maruyama method, we partition the time interval into sub-intervals of width $\Delta t=.005$, and initialize the solution at $x=0$ and $y=0$. To create the discretized Markov process, we recursively define the system as
\begin{equation}
\begin{aligned}
x_{n+1}&=x_n+y_n\Delta t + \sqrt{\epsilon} \Delta W_{1n}, \\
y_{n+1}&=y_n+(-x_n+2\eta y_n(x_n^2-1)) \Delta t + \sqrt{\epsilon} \Delta W_{2n}.
\end{aligned}  
\label{EQ:EM}
\end{equation}
A standard \correction{comment 20}{Wiener} process, $W$, satisfies the property that Brownian increments are independent and normally distributed with mean zero and variance $\Delta t$. Therefore it follows that $\Delta W_{in}=W_{in}-W_{i(n-1)}$ can be numerically simulated using $\sqrt{\Delta t} \cdot N(0,1)$. This can be shown by manipulating the probability density function of $N(0,\Delta t)$.

\subsection{The Algorithm} We want to find the realizations that have transitioned from the origin to somewhere outside the unstable periodic orbit, and capture where on $\Gamma$ they have exited. Let $\tau_i$ denote the first time a path, $X_i$, crosses $\Gamma$. We define escape events to be the paths $X_i$ that have $\tau_i \leq 200$. Let the point of $X_i$ at $\tau_i$ be given by $(x_i,y_i)$. Refer to Figure \ref{fig:tipnotip} for an example of realizations that have and have not escaped on the finite time interval. Assume for $N$ realizations there are $K$ escape events. We construct the distribution for the $x$ and $y$ locations for the $K$ escape events. To verify we have a converged result for the distribution of the location of escape events, we use the following process:
\begin{enumerate}
\item Bin the $x$ (respectively $y$) locations of the $K$ escape events by the Freedman Diaconis rule \cite{freedman_histogram_1981}. This separates the $K$ escape events into $B$ bins of equal length. 
\item Run another $N$ realizations of Equation \eqref{eq:first_order_ivdp_sde} on the same time interval and with the same step size. Assume there are $J$ escape events. We bin the $J$ escape events by the same number of bins $B$ found in Step 1. 
\item There are now two vectors of the same length, $D_1$ and $D_2$, where each component of the vector represents the amount of paths that tipped in that interval for the $x$ (respectively $y$) location. Calculate $Err=\frac{||D_1-D_2||_2}{||D_1||_2}$, which is the relative error between the two data sets. 
\item If $Err<0.1$, we say we have found the converged distribution. However, if $Err \geq 0.1$, we double the number of samples and repeat this process.
\end{enumerate}
In addition, we use the Kolmogorov-Smirnov Two Sample Test \cite{noauthor_kolmogorovsmirnov_2008} as a final verification that we have a converged distribution.

\subsection{The Escaping Paths}

We study Equation \eqref{eq:first_order_ivdp_sde} with $\eta=.5$ As mentioned above, we find  the same results if we use $\sqrt{\epsilon}=.3$ or $\sqrt{\epsilon}=.35$. and $\sqrt{\epsilon}=.32$. In these noise regimes, on the interval $[0,200]$, initialized at the origin with a step size of $dt=.005$, we find the percentage of samples that escape to be approximately [2\%,5.5\%,17.5\%] for $\sqrt{\epsilon}=[.3,.32,.35]$. We focus on $\sqrt{\epsilon}=.32$ as it is the smallest noise we can study without too much computational stress.

Using the process outlined above, we find converged distributions for exit location in both $x$ and $y$ along $\Gamma$ for this noise regime. We started with two sets of $N=50000$ realizations, doubled it to two sets of $N=100000$, and then doubled again to two sets of $N=200000$. In this case, $Err_x=0.055<0.1$ and $Err_y=0.069<0.1$. In total, there are 400000 simulations in which 21801 realizations escape.

Collecting the points $(x_i,y_i)$ from the paths that escaped, we see that they fall on specific parts of $\Gamma$. In Figure \ref{fig:MPEPC5} (b) and (c), we plot both a heatmap and jointplot of the exit locations respectively and see that there are two distinct spots on the $\Gamma$ where trajectories mostly exit. Additionally, we see the symmetry of exit locations along $\Gamma$. 

\section{Matching Simulations and Theory
\label{sec:conclusion}}

The point here is to compare our theoretical prediction with the Monte Carlo simulations and thus confirm our mathematical derivation of the MPEP. The main takeaway is the connection between the pivot point, the OM-selected point and the exit distribution. 

\begin{figure}[tbp]
\begin{center}
$
\begin{array}{ccc}
\includegraphics[scale=0.2]{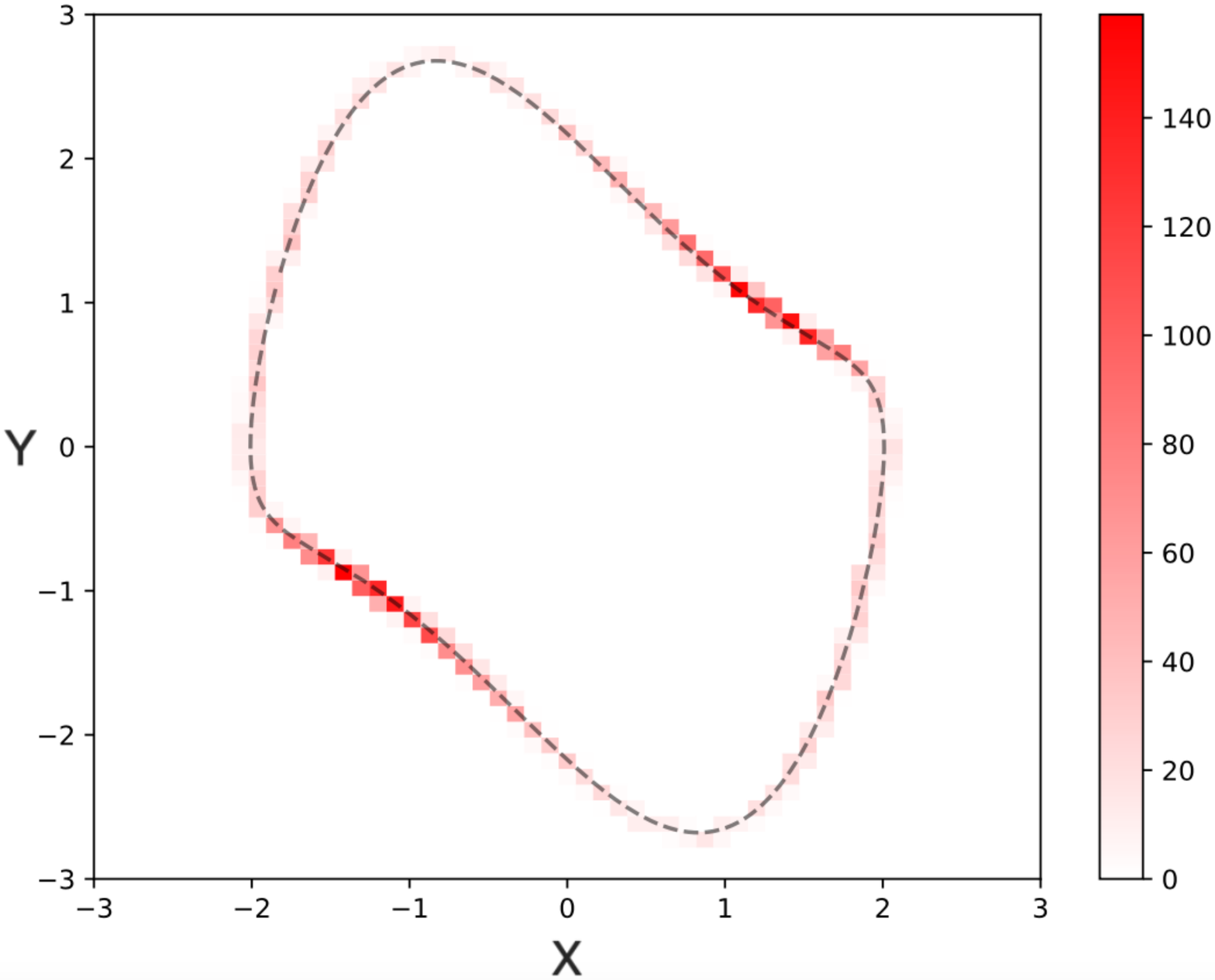} & \hspace{-7mm}\includegraphics[scale=0.24]{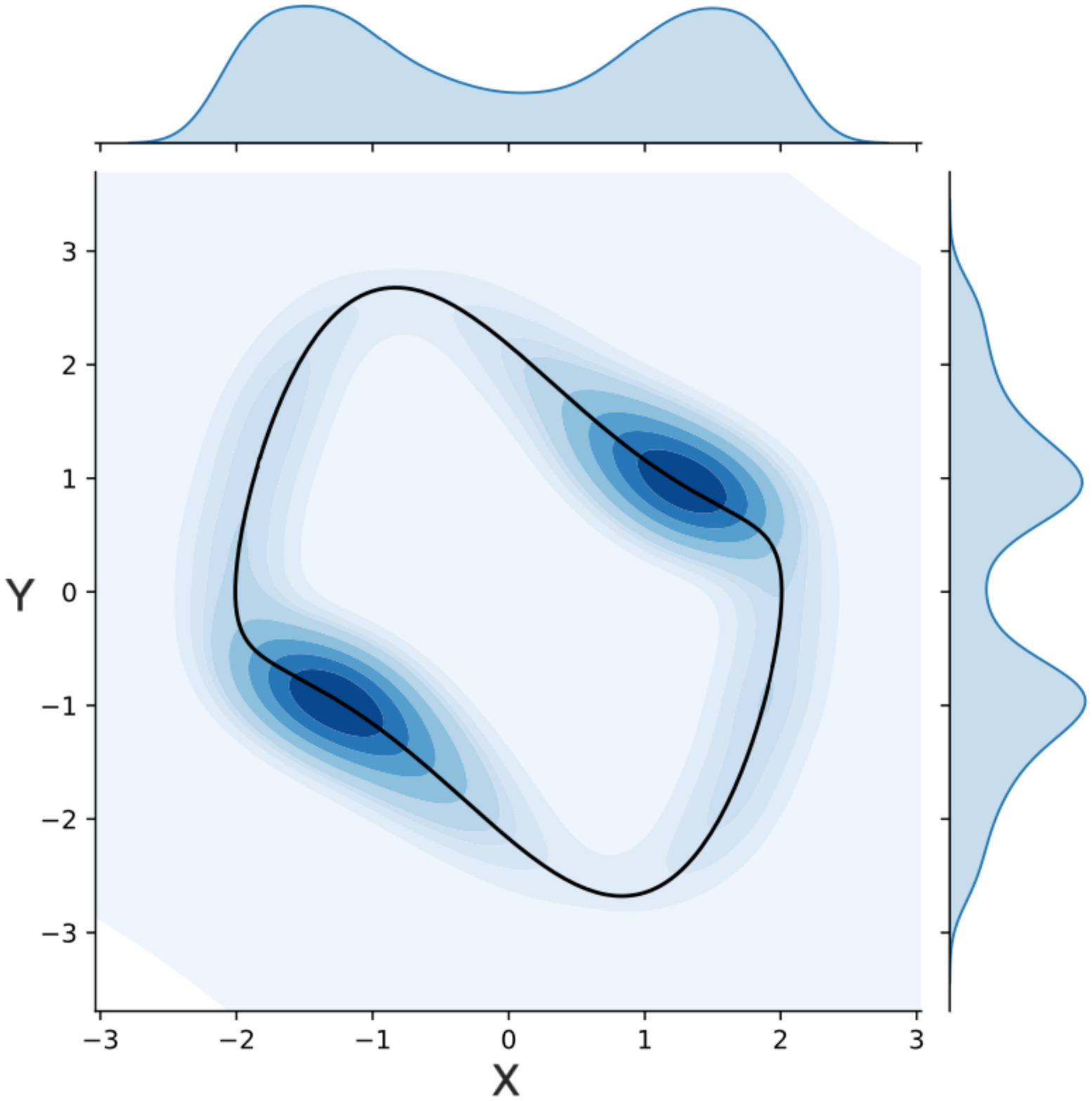} \\ (a) & (b)
\end{array}
$
\end{center}

\caption{Parameters are set at $\eta=.5, \sqrt{\epsilon}=0.32$: (a) Heatmap of points $(x_i,y_i)$ from 21801 realizations, (b) Jointplot of points $(x_i,y_i)$ from 21801 realizations. In both figures (a) and (b), we can clearly see two clear regions of exit points.}
\label{fig:MPEPC5}
\end{figure}

\subsection{The Escape Hatch and the Pivot Point}

In Section \ref{sec:pivot}, we define a subset of the mouth of the River, resulting from trajectories with Maslov Index 0, as a set $Q$. We further define the {\em pivot point} from the mouth of the river where the associated trajectory will pick up a conjugate point exactly on $\mathcal{T}_{\Gamma}$ and delineates $Q$ on one end. However, the set $Q$ does not pick out any particular part of the periodic orbit since its projection onto the $(x,y)$-space is all of $\Gamma$. Nevertheless, the exit distribution from Section \ref{sec:mcsim} does pick out particular parts of $\Gamma$. 

\begin{figure}[tbp]
\begin{center}
$
\begin{array}{cc}
\includegraphics[scale=0.25]{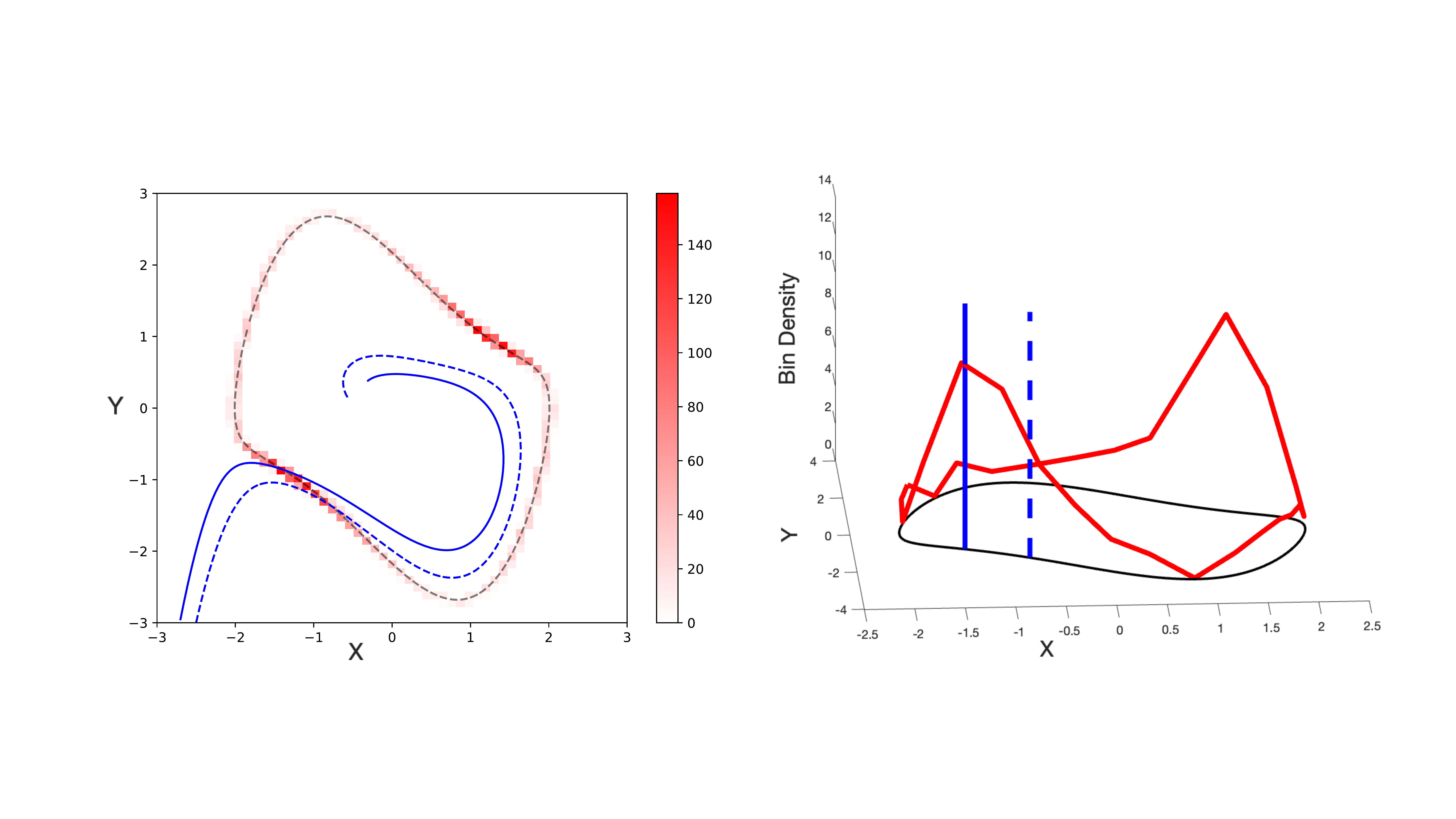}  &
 \includegraphics[scale=0.25]{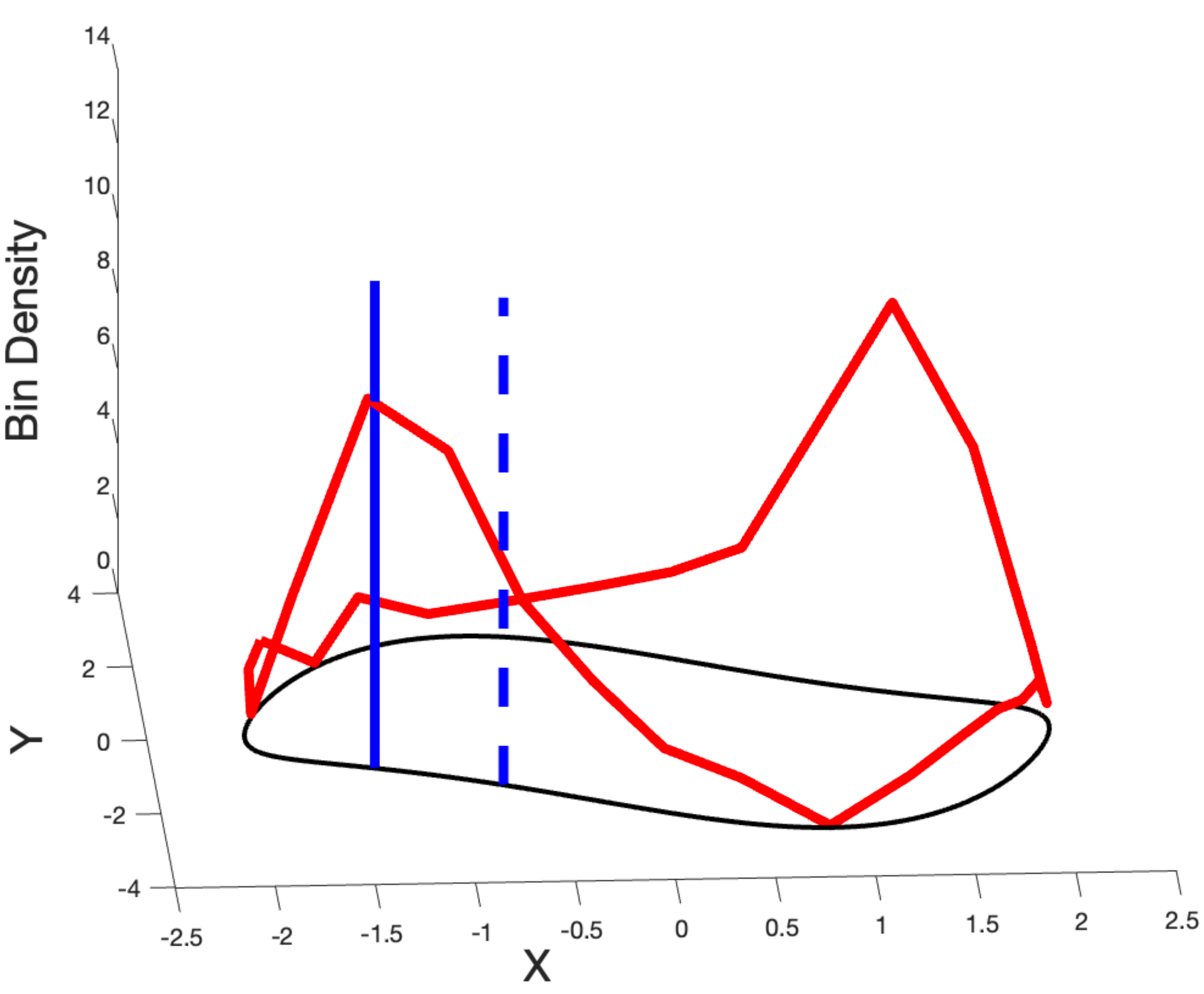}\\ (a) & (b)
\end{array}
$
\end{center}
\caption{In (a) the solid blue curve marks the predicted most probable escape path as determined by minimizing the action of the OM functional, computed as a perturbation of the FW functional. The dashed blue curve depicts the trajectory that corresponds to the pivot point as describe in Section \ref{sec:pivot}. The heat map displayed on the periodic orbit depicts the probability of an escape event occurring at that location, as described in Section \ref{sec:mcsim}.  The red curve in Figure (b) provides a 3D representation of the heat map displayed in Figure (a), and the vertical blue lines indicate the intersection of the orbits displayed in Figure (a) with the periodic orbit.  }
\label{fig:MPEPC2}
\end{figure}

Surprisingly, the trajectories choose the initial part of $Q$ for their escape. Figure \ref{fig:MPEPC5} shows the dominating part of the escape hatch through two different kinds of heatmap. They are clearly in the southwest and northeast corners of the periodic orbit. Recalling the symmetry, we can focus on one part, and we choose the southwest corner. 

In panel (a) of Figure \ref{fig:MPEPC2}, the dashed blue curve represents the trajectory of Equation \eqref{eq:hamil} that exits at the pivot point. This is seen to be at the right hand end of the escape hatch as determined by the Monte-Carlo simulations. Most of the trajectories clearly exit beyond the pivot point in terms of the natural ordering on $Q$. But they exit relatively close to it rather than continuing to follow the unstable manifold along $\Gamma$ and exiting further later, after which the action would have actually decreased.

\subsection{The Escape Hatch and the OM point}
The Onsager-Machlup functional shows why the noisy trajectories choose to exit in a region only just beyond the pivot point. In Figure \ref{fig:MPEPC2}, the solid blue curve was computed from the minimum of the  OM functional along FW orbits. This is the selection mechanism we have discussed and we claim justifies the designation of the OM-trajectory as the MPEP for the associated level of noise. The angle is found from the graph in Figure \ref{fig135}. The minimum occurs around $\theta =0.44$, which value is used to initiate the trajectory on $\mathcal{K}$, and we call this the OM-trajectory. The OM-selected point is the point where this trajectory crosses the periodic orbit $\Gamma$. 

The OM selected trajectory is shown as a solid blue curve in Panel (a) of Figure \ref{fig:MPEPC2}. The OM-point is represented by the solid, vertical blue line in Panel (b). Fom Panel (b) the OM-point can be seen to coincide with the peak of the exit distribution. 
\begin{figure}[tbp]
\begin{center}
\includegraphics[height=7cm]{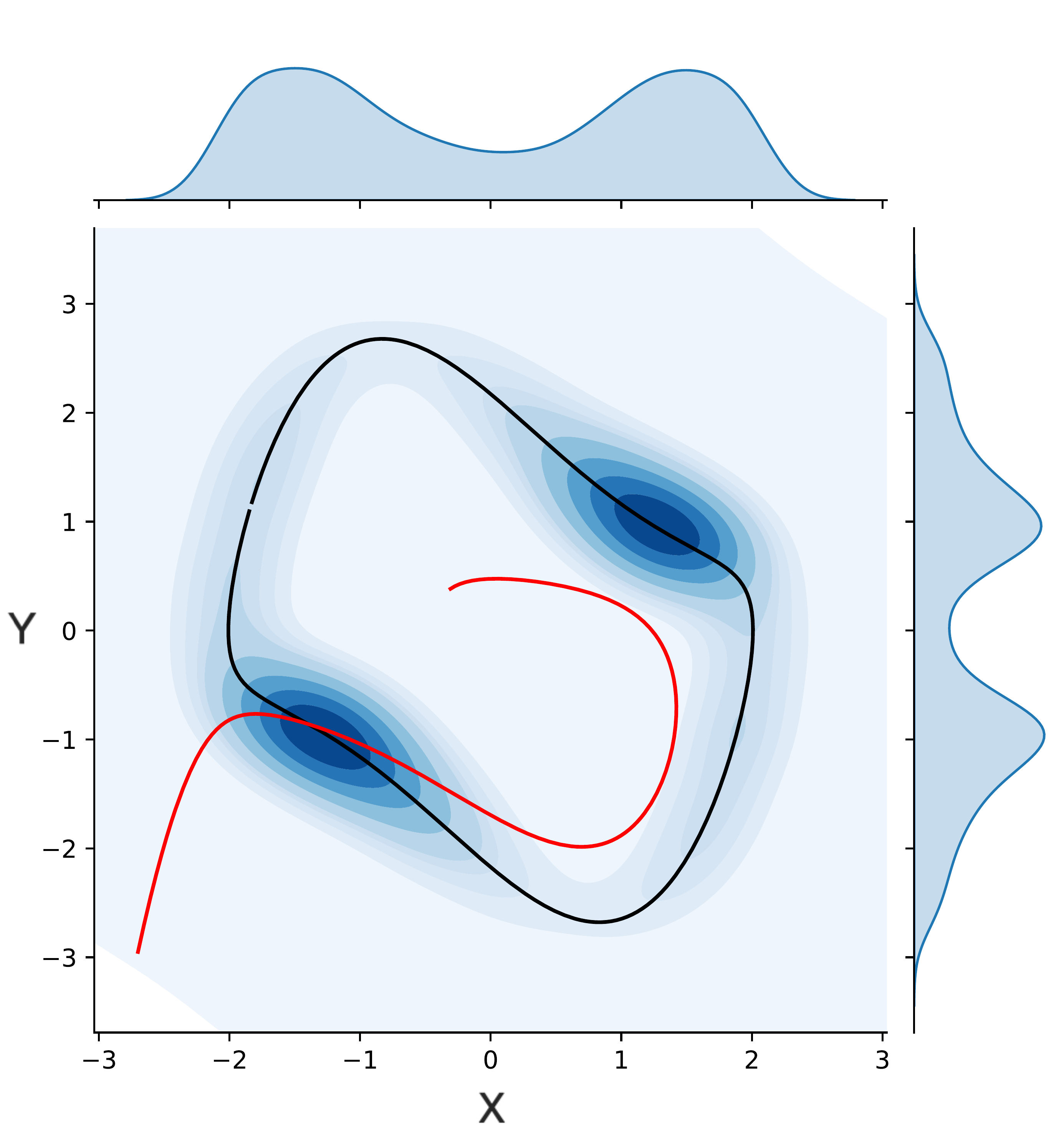}
\end{center}
\caption{The solid red curve marks the predicted most probable escape path as determined by minimizing the action of the OM functional, computed as a perturbation of the FW functional. The blue heat map depicts the joint plot of the exit locations of noisy paths on the periodic orbit. }
\label{fig:MPEPC3}
\end{figure}

Panel (a) of Figure \ref{fig:MPEPC2} renders the entire trajectory, whereas Panel (b) focuses on the exit set on $\Gamma$. The vertical dashed and solid lines give the location of the pivot point and OM-selected point respectively. Their relationship with the exit distribution is self-evident. The pivot point pins one end of the distribution, while the OM selected point lies at the peak of the distribution. We have not found a specific characterization of the left end of the distribution, but it does appear to drop off rapidly after the OM point. The distribution on the pivot point side has a much gentler drop-off. Note that this is reminiscent of the Gumbel-type distributions often seen in these situations.

The significance of the OM point as being at the peak of the exit distribution is depicted further in Figure \ref{fig:MPEPC3}. This figure shows the striking coincidence of the center of the heatmap with the OM point. Note that the pivot point is independent of the noise as it only depends on the FW functional. On the other hand, the OM point depends on the noise as it is based on the OM functional. In this case, it is evaluated with the same level of noise as we use for the Monte-Carlo simulations. We anticipate that, as noise is reduced, it would move around the periodic orbit.

The exit points themselves on the periodic orbit $\Gamma$ have been emphasized so far. We can compare the full trajectories with the Monte-Carlo simulations to see that the noisy trajectories do indeed closely follow the OM selected trajectory. 
In Figure \ref{fig:MPEP_KDE}, an estimate of the time slices of the noisy trajectories that exit is shown as a series of dots. 
\begin{figure}[tbp]
\begin{center}
\includegraphics[height=6cm]{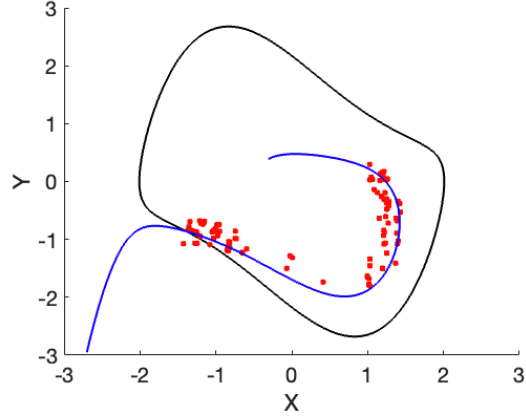}
\end{center}
\caption{The OM-trajectory is depicted as a blue curve and the red dots represent the values of the kernel density estimates of the time slice distributions of exiting, noisy trajectories.}
\label{fig:MPEP_KDE}
\end{figure}
A kernel density estimate is used to estimate the time slice distributions. The trajectories are reparameterized to begin on a given circle around the origin. This circle is chosen large enough so as to make the different future time slices of the family of noisy trajectories comparable. While there is some arbitrariness in this choice of time parameterization, it gives an appropriate picture of the time evolution of the distribution of noisy trajectories. The OM-trajectory is depicted again as the solid blue curve and it is seen to give a fairly good approximation of the time slices. 

The most important point to take away from Figure \ref{fig:MPEP_KDE} is not just that the OM-point and the peak of the exit distribution match on $\Gamma$, but that the OM-trajectory is matching the distribution of noisy trajectories all along the path. Our interpretation is that the OM-trajectory is acting as a guide for the exiting trajectories of the stochastic system. 

\correction{comment 9}{Furthermore, as the noise strength $\varepsilon$ becomes smaller, the theory of Day \cite{day_exit_1996} suggests that the escape hatch would smoothly rotate around the periodic orbit. We believe that it in fact jumps to another weak part of the periodic orbit. Due to the symmetry in the IVDP problem, this will be approximately one-half period around the orbit. This jumping behavior will be repeated when $\varepsilon$ is reduced further. }

\begin{figure}[htbp]
 \begin{center}
$
\begin{array}{lcr}
(a) \includegraphics[scale=0.22]{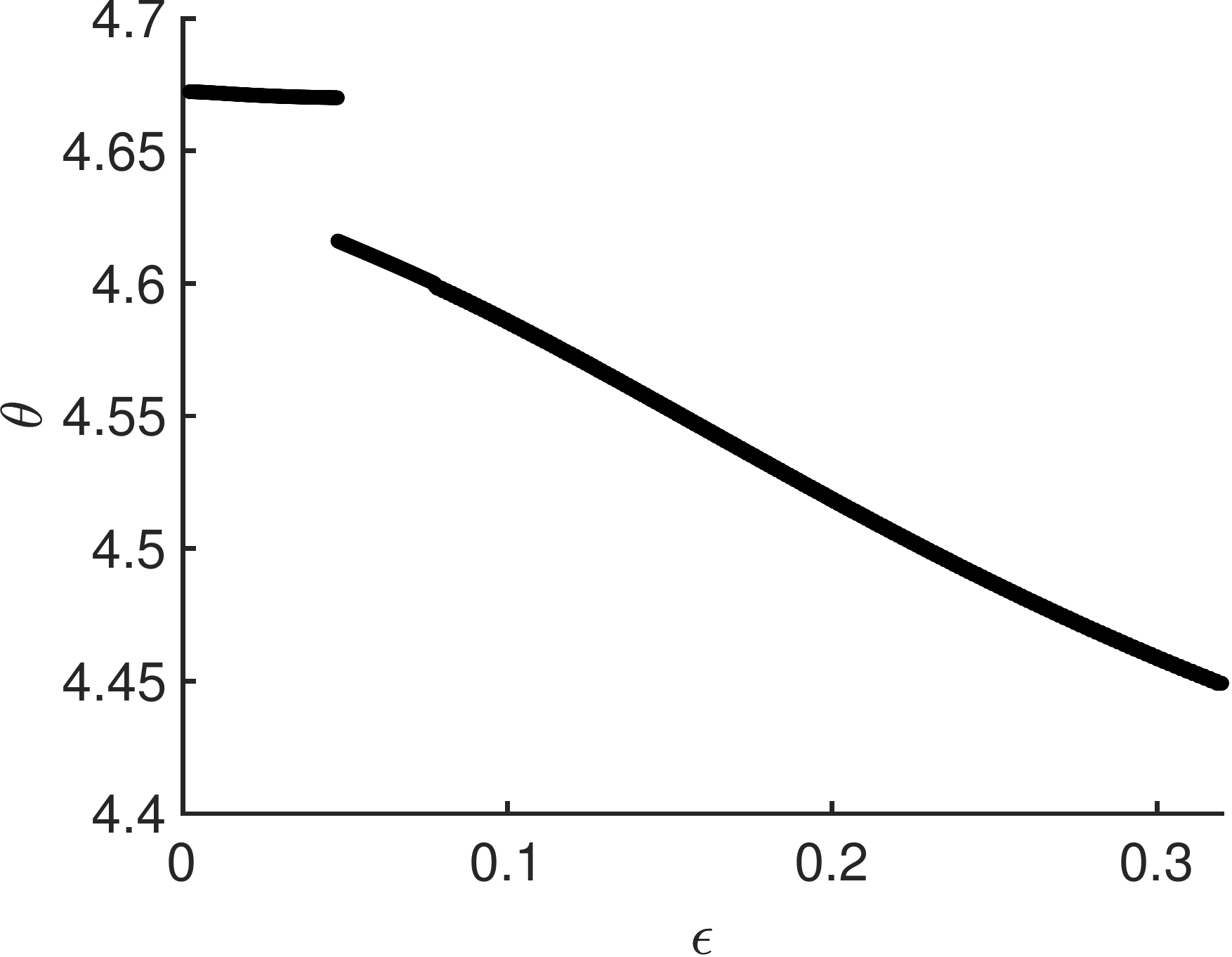} & (b) \includegraphics[scale=0.22]{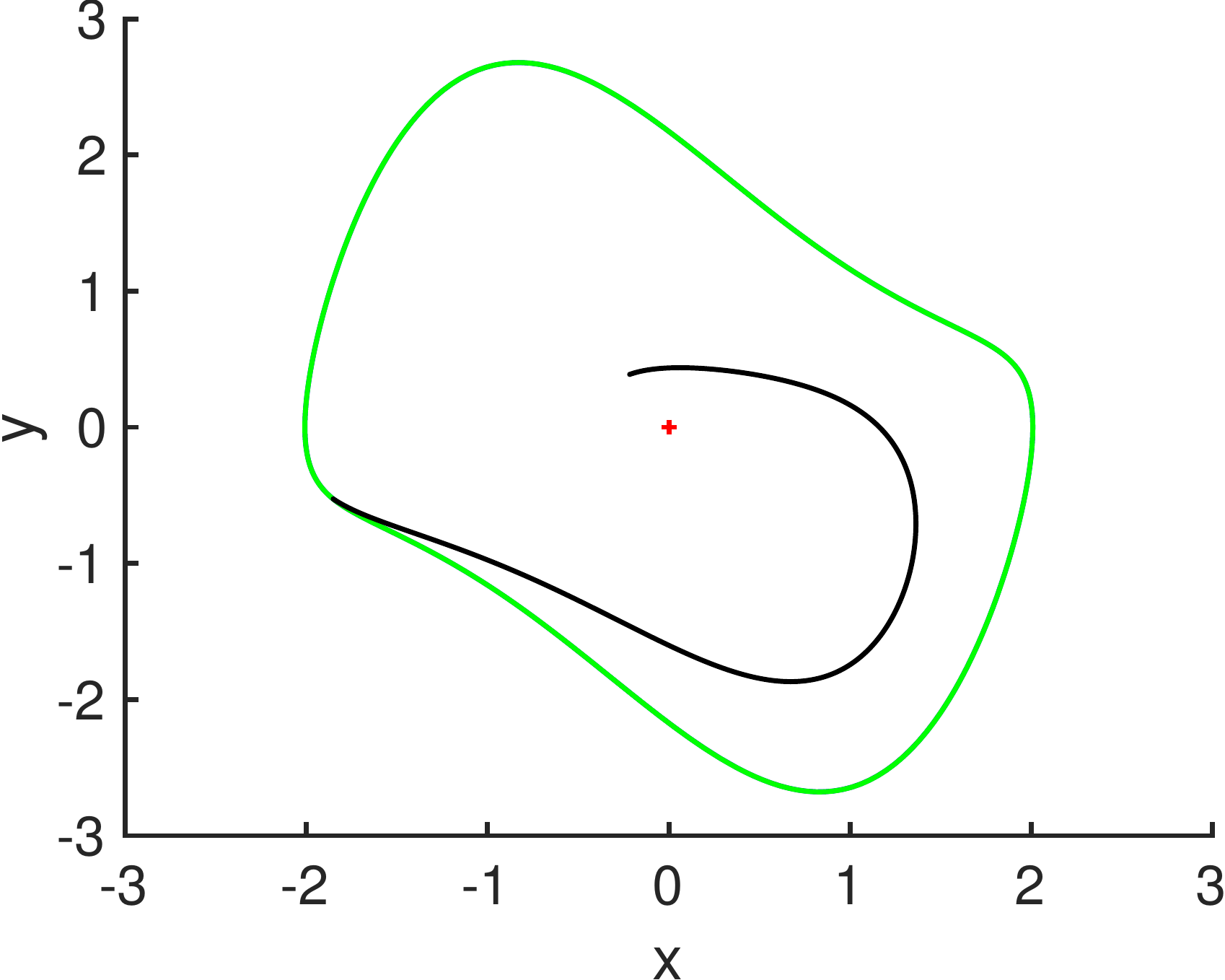}& (c) \includegraphics[scale=0.22]{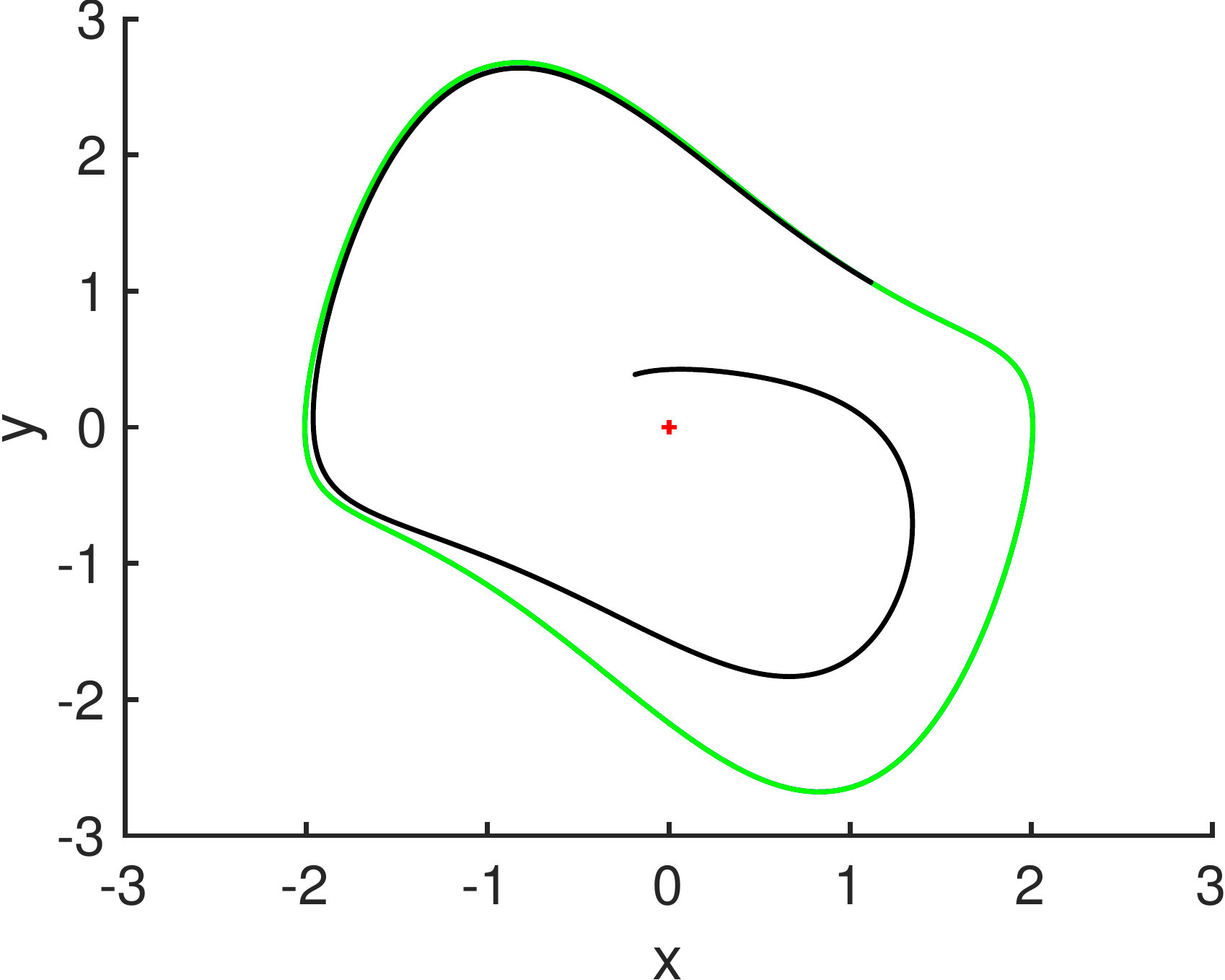}
\\
(d) \includegraphics[scale=0.22]{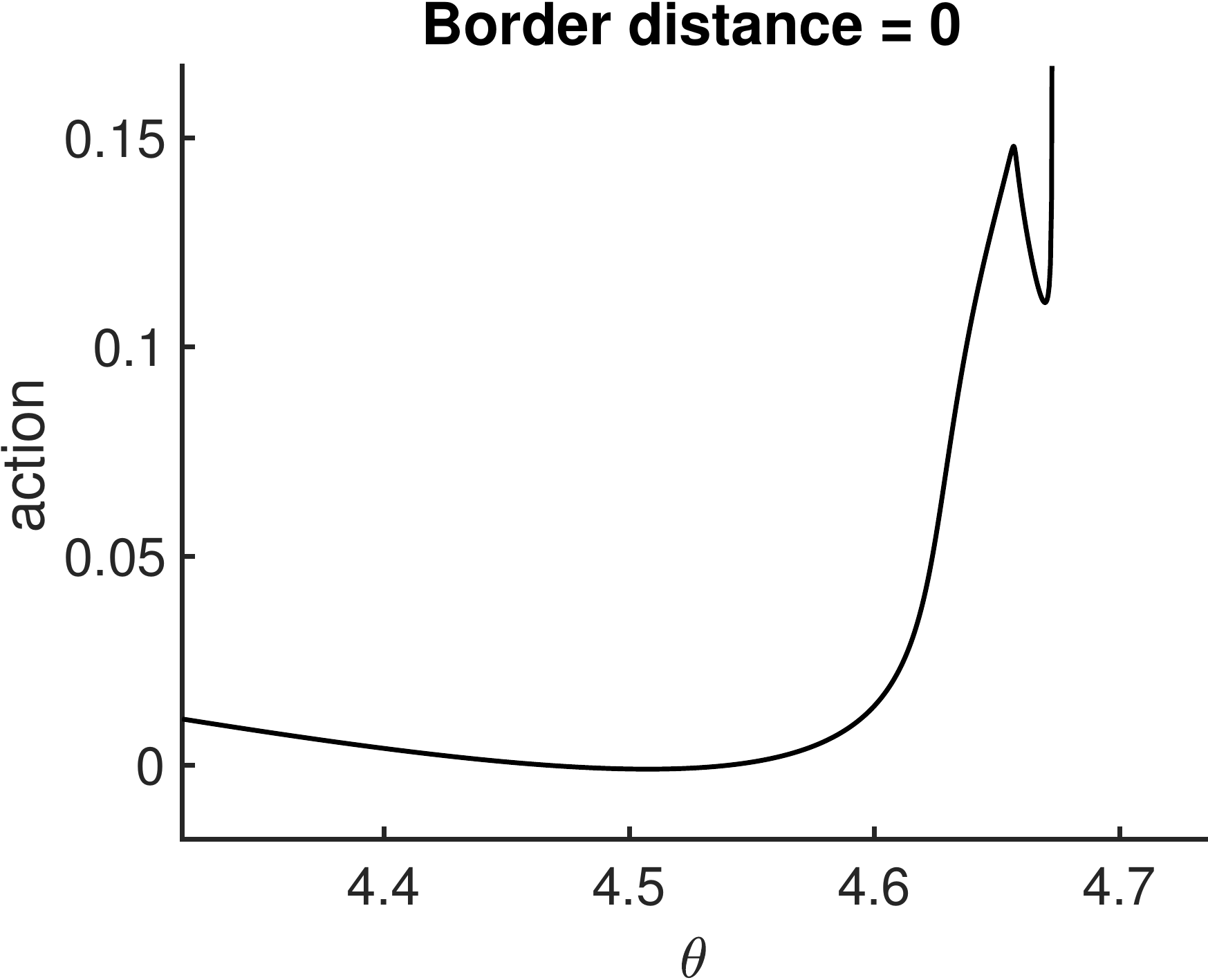} & (e) \includegraphics[scale=0.22]{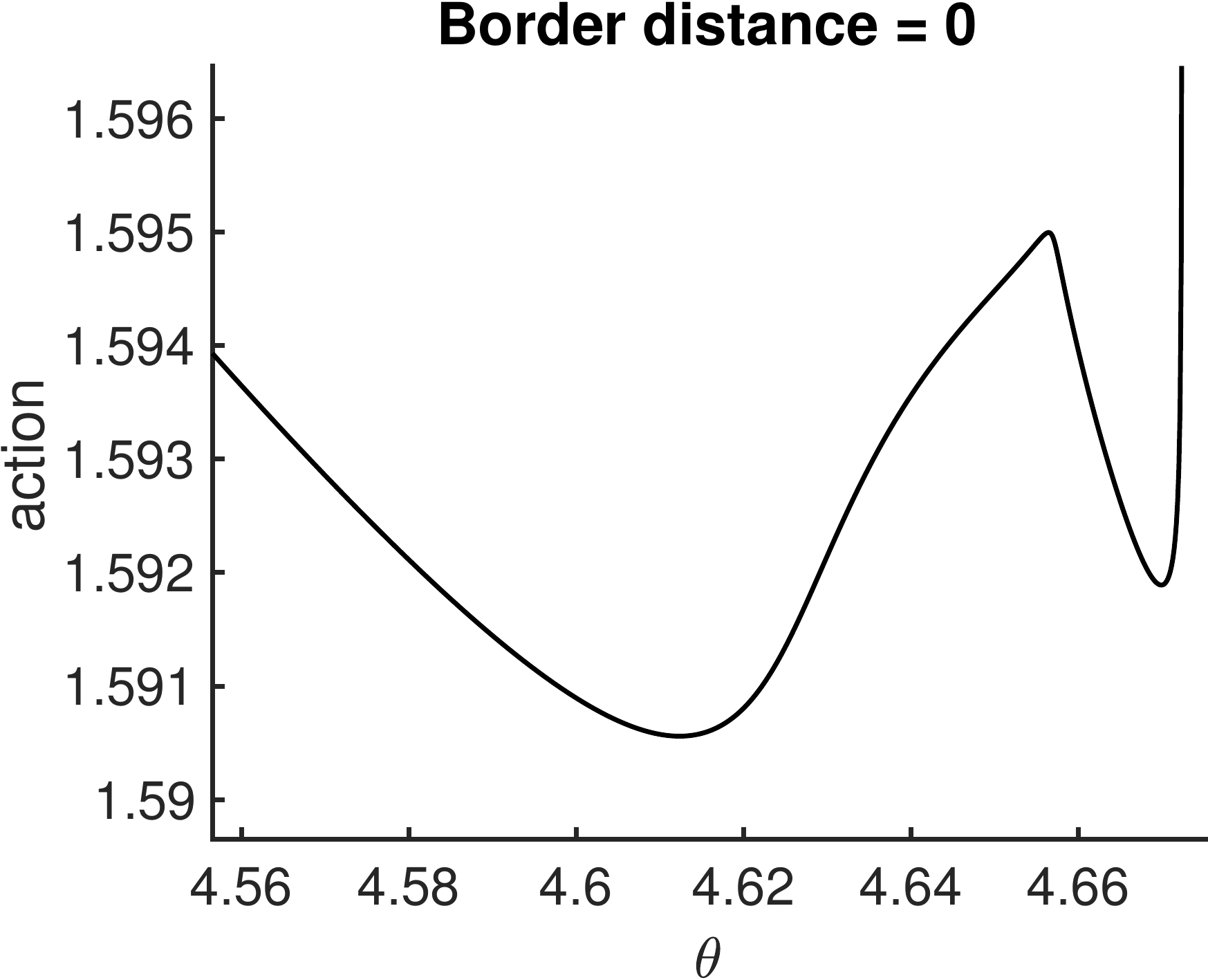}
  &(f) \includegraphics[scale=0.22]{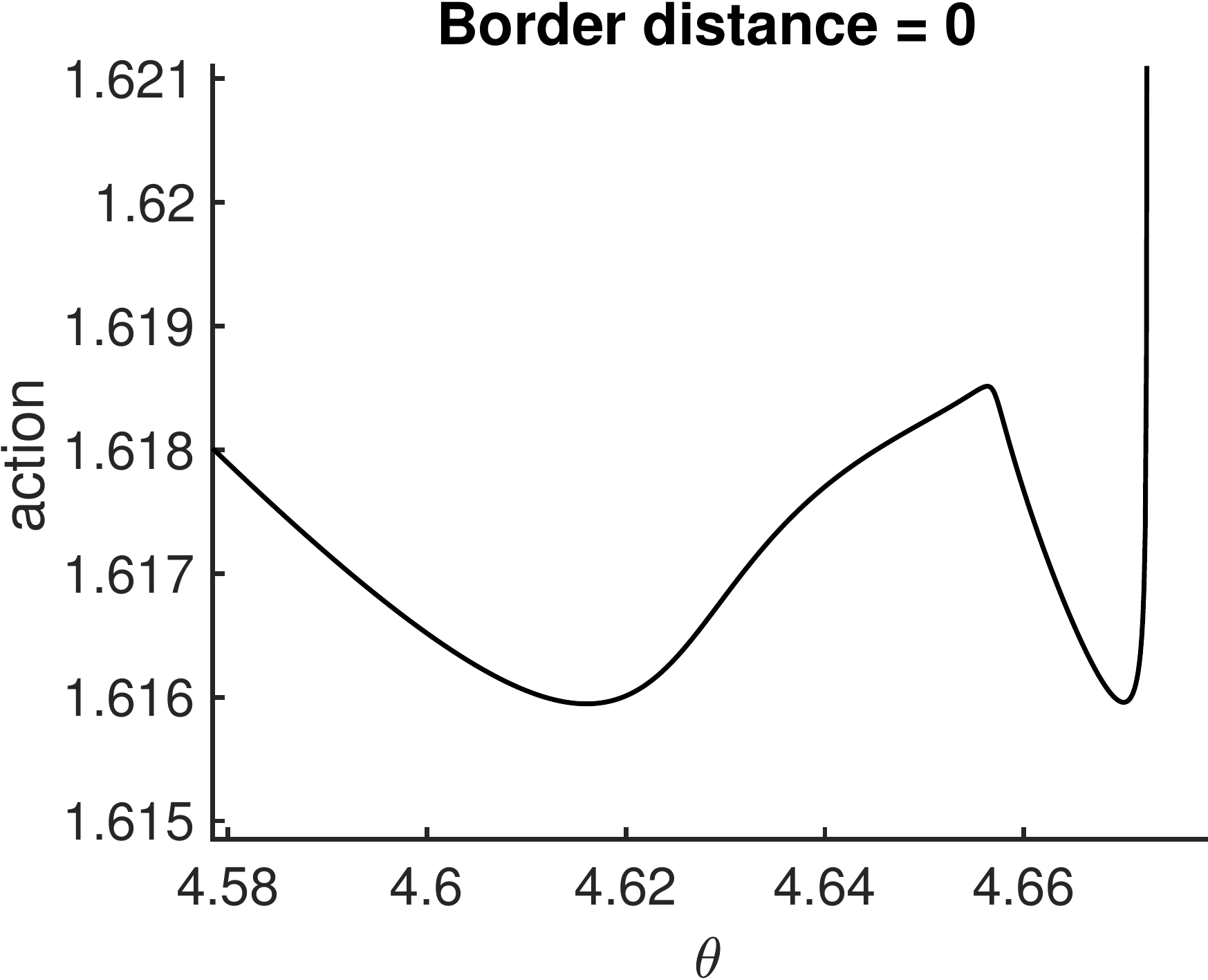}
\end{array}
$
\end{center}
\caption{(a) Plot of the value of $\theta$ that corresponds to a global minimum of the Onsager-Machlup functional as determined as a perturbation of the Friedlin-Wentzell functional, against the noise strength $\eps$. (b) Plot of the global minimizer corresponding to $\eps = 4.7595\mathrm{e}{-2}$, which corresponds to  $\theta =  4.616014$. (c) Plot of the global minimizer corresponding to $\eps = 2.2753\mathrm{e}{-3}$, which corresponds to $\theta = 4.670036$. (d)-(f) Plot of the action of the Onsager-Machlup functional against $\theta$ for $\eps = 4.7595\mathrm{e}{-2}$, $\eps = 3.000\mathrm{e}{-3}$, and $\eps = 2.2753\mathrm{e}{-3}$, respectively. }
\label{fig141}
\end{figure}

\correction{comment 10}{This effect can be seen from the use of the OM functional. Indeed, the OM point does jump, in a discontinuous fashion, to the other side of the periodic orbit. This can be seen in panels (a)-(c) of Figure \ref{fig141}. Panel (a) gives the $\theta$ value on the initiating circle for the OM point (note that {\it Border distance} is from the periodic orbit so that it being 0 means that we are minimizing the functional up to where they cross $\Gamma$). At a certain value of $\varepsilon$ there is a jump in the $\theta$-value of the OM point and the corresponding trajectories are seen to change from that shown to panel (b)-to the right of the jump-to that shown in panel (c) to the left, i.e., when $\varepsilon$ is smaller. Panels (d)-(e) show why this occurs by looking at the value of the OM functional along the relevant FW orbits. There is a local minimum to the right of the absolute minimum, which corresponds to the OM point discussed in this paper. As $\varepsilon$ is decreased, the value at this local minimum decreases and takes over as the absolute minimum at a certain value of $\varepsilon$. we anticipate that there would be further minima to the right and that these would correspond to further cycling.}

\section{ Conclusion and Discussion
\label{sec:conclusion1}}

We have developed a dynamical systems approach for computing most probable escape paths where the boundary of the basin of attraction is a periodic orbit, and the noise strength is small but not vanishingly so. The key is the isolation of a subset of the unstable manifold of the equilibrium solution surrounded by the periodic orbit, which we call the River. This subset of the unstable manifold is delineated by heteroclinic orbits which connect the equilibrium solution to the periodic orbit. We use the Maslov index to distinguish local minimizers (subject to a fixed boundary condition) from other extremizing orbits. In addition, we establish a connection between the folding of $W^u(O)$ and the appearance of conjugate points along its trajectories. 

Much previous work has been done in studying MPEPs over periodic boundaries. In \cite{beri_solution_2005}, the authors studied the structure of the escape trajectories and showed that the Most Probable Escape Path (MPEP) reaches the limit cycle asymptotically with no momentum. In \cite{e_study_2010}, the authors also noted that in the case of an unstable limit cycle coexisting with a stable fixed point, the MPEP spirals toward the limit cycle asymptotically and its $\omega$-limit set is the complete limit cycle; \cite{maier_oscillatory_1996} showed that the MPEP does indeed reach the limit cycle asympotically and trajectories exiting are necessarily, optimal trajectories that are small perturbations of the MPEP. 

For intermediate noise regimes, the cycling is hardly evident and a specific subset of the boundary appears to be chosen through which the primary leakage of the escaping paths occurs. Our work is aimed at providing a theoretical underpinning for this phenomenon.

The core elements of the methodology can be summarized as follows:
\begin{enumerate}
    \item Use the 4D Hamiltonian system derived from the Euler-Lagrange equations from the Friedlin-Wentzell functional to compute stable and unstable manifolds for the periodic orbit in $H=0$ and the equilibrium solution respectively.
    \item Compute the heteroclinic orbits that arise from the transverse intersections of those invariant sets.
    \item Identify a set of trajectories delineated by the heteroclinic orbits where the unstable manifold of the equilibrium solutions leaks out of the periodic orbit (when projected on the $(x,y)$-space). We call this the River.
    \item Use the Maslov index to weed out the trajectories that do not correspond to local minimizers.
    \item Find the end-point of the set of trajectories with Maslov Index 0. This is the pivot point and is characterized by having a conjugate point exactly when crossing the periodic orbit.  
    \item Compute the action using the Onsager-Machlup functional as a pertubation to the Friedlin-Wentzell functional for trajectories in the part of the river found in the previous step.
    \item Use the OM trough (global minimum) to compute the associated trajectory in the 4D Hamitonian system originally derived from the Friedlin-Wentzell functional. This we call the OM-trajectory and is the MPEP for the given level of noise.
    \item Verify that this trajectory has no conjugate point before hitting the boundary.
    \item Finally, superimpose these trajectories on the converged result for the distribution of the location of escape events on the periodic orbit in order to validate our computations.
\end{enumerate}

For the IVDP, we carried out this program and showed a striking correspondence between the exit distribution and the OM-trajectory. Moreover, the pivot point acts as an anchor for the exit distribution with the exit set of the noisy trajectories choosing a region not much beyond it. 

Considerable insight can be gained from taking this dynamical systems perspective. The phenomenon in which parts of the unstable manifold of the fixed point cross the periodic does not occur in gradient systems and is a consequence of the transverse intersection of a stable (for the periodic orbit) and an unstable (for the fixed point) invariant manifold. 

Since the Freidlin-Wentzell functional is independent of noise, these dynamical constructions do not depend on the noise strength. Nevertheless, Large Deviation Theory can only be invoked to see how the Euler-Lagrange equations guide the noisy trajectories in the limit of vanishing noise. In our case, that theory predicts cycling. Our objective was to use the theoretical constructs of Freidlin-Wentzell theory to show how cycling is actually resisted when noise is made slightly larger.

\section*{Acknowledgement}
The authors wish to thank John Gemmer for helpful conversations. The authors also wish to thank the anonymous reviewers for very helpful comments that have led to a much-improved version of the paper. In particular, the issue raised at the end about the possibility of the escape hatch jumping as $\varepsilon$ tends to zero was based on a very insightful question from one of the reviewers. Emmanuel Fleurantin was supported by NSF grant DMS-2137947 and Office of Naval Research grant N000141812204 during the work on this research. Christopher Jones and Katherine Slyman were supported by Office of Naval Research grant N000141812204.

\bibliographystyle{unsrt}
\bibliography{refs}

\medskip

\end{document}